\documentclass[a4paper,12pt]{amsart}

\usepackage{amsthm}
\theoremstyle{definition}
\newtheorem{theorem}{Theorem}[section]
\newtheorem{lemma}[theorem]{Lemma}

\theoremstyle{definition}
\newtheorem{definition}[theorem]{Definition}

\newtheorem{pclaim}[theorem]{Claim} 

\theoremstyle{remark}
\newtheorem{remark}[theorem]{Remark}

\theoremstyle{plain}
\newtheorem*{tight}{Tight Cut Lemma} 
\newtheorem*{rtight}{Formal Statement  of the Tight Cut Lemma} 

\numberwithin{equation}{section}

\newenvironment{romanenumerate}{
\begin{enumerate}

}
{\end{enumerate}}

\newcommand{\yield}{\triangleleft}

\newcommand{\pargpart}[2]{\mathcal{P}_{#1}(#2)}
\newcommand{\gsim}[1]{\sim_{#1}}

\newcommand{\upstar}[1]{\mathcal{U}^{*}(#1)}
\newcommand{\parup}[2]{\mathcal{U}_{#1}(#2)}
\newcommand{\parupstar}[2]{\mathcal{U}^{*}_{#1}(#2)}
\newcommand{\vup}[1]{U(#1)}
\newcommand{\vupstar}[1]{U^*(#1)}
\newcommand{\vparup}[2]{U_{#1}(#2)}
\newcommand{\vparupstar}[2]{U^*_{#1}(#2)}
\newcommand{\coup}[1]{{}^c\mathcal{U}(#1)}

\newcommand{\vcoup}[1]{{}^cU(#1)}
\newcommand{\vparcoup}[2]{{}^cU_{#1}(#2)}

\renewcommand{\Gamma}{N}

\newcommand{\complement}[1]{{#1}^c} 

\newcommand{\parNei}[2]{\Gamma_{#1}(#2)}
\newcommand{\comp}[1]{\mathcal{G}(#1)}
\newcommand{\paryield}[1]{\yield_{#1}}
\newcommand{\tower}[1]{\mathcal{T}(#1)}
\newcommand{\partower}[2]{\mathcal{T}_{#1}(#2)}

\newcommand{\poset}[1]{\mathcal{O}(#1)}

\newcommand{\pmin}[1]{{\rm min}\poset{#1}}
\newcommand{\border}[1]{\partial \poset{#1}}

\title[Tight Cut Lemma]{A Graph Theoretic Proof of the Tight Cut Lemma}
\author{Nanao Kita}
\address{National Institute of Informatics
2-1-2 Hitotsubashi, Chiyoda-ku, Tokyo, Japan 101-8430}
\email{kita@nii.ac.jp}
\date{\today}

\begin{document}
\maketitle

\begin{abstract}
In deriving their characterization of the perfect matchings polytope, 
Edmonds, Lov\'asz, and Pulleyblank introduced the so-called {\em Tight Cut Lemma} as the most challenging aspect of their work. 
The Tight Cut Lemma 
in fact claims {\em bricks} as the fundamental building blocks that constitute a graph 
in studying the matching polytope 
and can be referred to as a key result in this field.  
Even though the Tight Cut Lemma is a matching \textup{(}$1$-matching\textup{)} theoretic statement 
that consists of purely graph theoretic concepts,  
the known proofs either employ  a linear programming argument 
or are established upon results regarding a substantially wider notion than matchings. 
This paper presents a new proof of the Tight Cut Lemma, 
which attains both of the two reasonable features for the first time, namely,  
being {\em purely graph theoretic} as well as {\em purely matching theory closed}. 
Our proof uses, as the only preliminary result, the canonical decomposition recently introduced by Kita.  
By further developing this canonical decomposition, 
we acquire a new device of {\em towers} to analyze the structure of bricks, 
and thus prove the Tight Cut Lemma. 
We  believe that our new proof of the Tight Cut Lemma  provides a highly versatile example of how to handle bricks.

\end{abstract}

\section{Introduction}
Edmonds, Lov\'asz, and Pulleyblank~\cite{DBLP:journals/combinatorica/EdmondsLP82} introduced the {\em Tight Cut Lemma} as a key result in their paper characterizing the perfect matching polytope. 
They stated that proving the Tight Cut Lemma was the most difficult part. 

\begin{tight}
Any tight cut in a brick is trivial. 
\end{tight} 
A graph is a {\em brick} if  
deleting any two vertices results in a connected graph with a perfect matching. 
A cut is {\em tight} if it shares exactly one edge with any perfect matching. 
A tight cut is {\em trivial} if it is a star cut.

The Tight Cut Lemma in fact characterizes the {\em bricks} as the fundamental building blocks  that constitute a graph in the polyhedral study of matchings via the inductive operation the {\em tight cut decomposition}. 
As long as a given graph has a non-trivial tight cut, 
we can apply an operation that decomposes it into  two smaller graphs that  perfectly inherit  the matching theoretic property; this is the {\em tight cut decomposition}. 
In fact, we can view the Tight Cut Lemma as stating 
that {\em the bricks are the irreducible class of the tight cut decomposition}.   
Via the tight cut decomposition, Edmonds et al.~\cite{DBLP:journals/combinatorica/EdmondsLP82} derived the dimension of the perfect matching polytope  using  as a parameter the number of bricks that constitute a given graph. Consequently, they determine the minimal set of inequalities that defines the perfect matching polytope.    

Since Edmonds et al.~~\cite{DBLP:journals/combinatorica/EdmondsLP82}, 
the study of bricks and the consequential results on the perfect matching polytope (and lattice) have flourished; see Lov\'asz~\cite{DBLP:journals/jct/Lovasz87} and Carvalho, Lucchesi, and Murty~\cite{DBLP:journals/jct/CarvalhoLM02,DBLP:journals/jct/CarvalhoLM02a,DBLP:journals/jct/CarvalhoLM02b,DBLP:journals/jct/CarvalhoLM04,DBLP:journals/combinatorica/CarvalhoLM99}.

Edmonds et al.~\cite{DBLP:journals/combinatorica/EdmondsLP82} proves the Tight Cut Lemma via a linear programming argument, whereas the statement itself consists of purely graph theoretic notions only. 
This might be problematic as well as awkward because not knowing how to treat bricks and tight cuts combinatorially might limit our ability to investigate this field.  
Szigeti~\cite{DBLP:journals/combinatorica/Szigeti02} later gives a purely graph theoretic proof using the theory of optimal ear-decomposition proposed by Frank~\cite{Frank1993}.

In this paper, we give a new purely graph theoretic proof using the theory of {\em canonical decomposition} 
 for general graphs with perfect matchings, which was recently proposed by Kita~\cite{DBLP:conf/isaac/Kita12,DBLP:journals/corr/abs-1205-3816}. 
As the term ``canonical'' conventionally means in the mathematical context, 
canonical decompositions are a standard tool to analyze graphs in matching theory. 
Several  canonical decompositions are classically known such as the Gallai-Edmonds, 
the Kotzig-Lov\'asz, and the  Dulmage-Mendelsohn~\cite{lovasz2009matching}.   
However, none of them target the general graphs with perfect matchings but rather  more particular classes of graphs, 
until  Kita~\cite{DBLP:conf/isaac/Kita12,DBLP:journals/corr/abs-1205-3816} introduced a new canonical decomposition. 
To prove the Tight Cut Lemma, we must assume that we are given a brick, a non-star cut, and a perfect matching that shares exactly one edge, say, $e$,  with the cut,   
and then find another perfect matching that shares more than one edge with the cut. 
and then find another perfect matching that shares more than one edge with the cut. 
Deleting $e$ from the brick together with its ends results in a graph with perfect matchings. 
Hence,  analyzing the structure of this graph with Kita's canonical decomposition 
would appear to be more reasonable means of obtaining a new proof of the Tight Cut Lemma.

We further characterize our new proof as purely {\em matching \textup{(}$1$-matching\textup{)} theory closed} 
as well as purely graph theoretic.  
Our proof uses solely  the canonical decomposition by Kita for known results,  
which is obtained from scratch via the most elementary discussion regarding $1$-matchings.  
In contrast, Szigeti's proof involves explicitly or implicitly
a lot more things, some of which are not $1$-matching closed; 
because, 
the optimal ear-decomposition theory is established upon 
not only  many known results and notions in matching theory such as 
the Tutte-Berge formula and the notion of barriers, the Gallai-Edmonds decomposition, and  the theory of 
ear-decompositions of some classes of graphs,   
but also the theory of {\em $T$-join}, which is a substantially wider notion than $1$-matchings. 
As the Tight Cut Lemma and the main applications are purely $1$-matching theoretic, 
our proof has quite a reasonable nature. 
We also believe  our proof  to be significant in that it provides a highly versatile  example of how to study bricks graph-theoretically.

The remainder of this paper is organized as follows. 
Section~\ref{sec:pre} presents preliminary definitions and results: 
Section~\ref{sec:pre:def}  explains fundamental notation and definitions; 
Section~\ref{sec:pre:props} presents some elementary lemmas, 
and Section~\ref{sec:pre:cathedral}  introduces the canonical decomposition given by Kita~\cite{DBLP:conf/isaac/Kita12,DBLP:journals/corr/abs-1205-3816}. 
Section~\ref{sec:tower} introduces new results of us; 
here, we further develop a device to analyze the structure of graphs with perfect matchings, 
 which will be used in Section~\ref{sec:newproof}. 
Section~\ref{sec:newproof} gives the new proof of the Tight Cut Lemma.

\section{Preliminaries}\label{sec:pre}
\subsection{Notation and Definitions} \label{sec:pre:def}
\subsubsection{General Statements} 
For standard notations and definitions on sets and graphs, we  mostly follow Shcrijver~\cite{schrijver2003combinatorial} in this paper. In this section, we list those that are exceptional or non-standard. 
We denote the vertex set and the edge set of a  graph $G$ by $V(G)$ and  $E(G)$.  
We sometimes refer to the vertex set of a graph $G$ simply as $G$. 
As usual, we often denote a singleton $\{x\}$ simply by $x$. 

\subsubsection{Operations of Graphs} 
Let $G$ be a graph, and let $X \subseteq V(G)$. 
The subgraph of $G$ induced by $X$ is denoted by $G[X]$. 
The notation $G-X$ denotes the graph $G[V(G)\setminus X]$. 
The contraction of $G$ by $X$ is denoted by $G/X$. 
Let $\hat{G}$ be a supergraph of $G$, and let $F \subseteq E(\hat{G})$. 
The notation $G+F$ and $G-F$ denotes the graphs obtained by adding and by deleting $F$ from $G$. 
Given another subgraph $H$ of $\hat{G}$, 
the graph $G+H$ denotes the union of $G$ and $H$. 
In referring to graphs obtained by these operations, 
we often identify their items such as vertices and edges with the naturally corresponding items of old graphs. 

\subsubsection{Paths and Circuits} 
We treat paths and circuits as graphs; 
i.e., a circuit is a connected graph in which every vertex is of degree two, 
and a path is a connected graph if every vertex is of degree no more than two and it is not a circuit.   
Given a path $P$ and two vertices $x$ and $y$ in $V(P)$, 
$xPy$ denotes the connected subgraph of $P$, which is of course a path, that has the ends $x$ and $y$. 

\subsubsection{Functions on Graphs}
The set of neighbors of $X\subseteq V(G)$ in a graph $G$ is denoted by $\parNei{G}{X}$; 
namely, $\parNei{G}{X} := \{ u \in V(G)\setminus X : \exists v \in X \mbox{ s.t. } uv \in E(G) \}$. 
Given $X, Y \subseteq V(G)$,  $E_G[X,Y]$ denotes the set of edges of $G$ whose two ends are in $X$ and in $Y$. 
We denote $E_G[X, V(G)\setminus X]$ by $\delta_{G}(X)$. 
We often omit the subscripts ``$G$'' in using these notations. 

\subsubsection{Matchings} 
Given a graph , a  {\em matching} is a set of edges in which any two are disjoint. 
A matching is a {\em perfect matching} if every vertex of the graph is adjacent to one of its edges. 
A graph is {\em factorizable} if it has a perfect matching. 
An edge of a factorizable graph is {\em allowed} if it is contained in a perfect matching. 
A graph $G$ is {\em factor-critical} if it has only a single vertex or for, any $v\in V(G)$, $G-v$ is factorizable.

Given a set of edges $M$, 
a circuit $C$ is {\em $M$-alternating} if $E(C)\cap M$ is a perfect matching of $C$. 
A path $P$ with two ends $x$ and $y$ is {\em $M$-saturated} (resp. {\em $M$-exposed})  between $x$ and $y$ if $E(P)\cap M$ (resp. $E(P)\setminus M$) is a perfect matching of $P$. 
A path $P$ with ends $x$ and $y$ is {\em $M$-balanced} from $x$ to $y$ 
if  $E(P)\cap M$ is a matching of $P$ and, among the vertices in $V(P)$, only $y$ is disjoint from the edges in $E(P)\cap M$.  
We define a trivial graph, i.e., a graph with a single vertex and no edges, as an $M$-balanced path. 
In other words, if we trace an $M$-alternating circuit, or $M$-saturated, exposed, or balanced path from a vertex, then edges in $M$ and in $E(G)\setminus M$ appear alternately; 
in an $M$-saturated path,  both edges adjacent to the ends are in $M$, 
whereas in an $M$-exposed path, neither of them are, and in an $M$-balanced path, one of them is in $M$ but the other is not. 

Given a set of vertices $X$, 
an $M$-exposed path is an {\em $M$-ear} relative to $X$ 
if the ends are in $X$ while the other vertices are disjoint from $X$; 
also, a circuit $C$ is an {\em $M$-ear} relative to $X$ 
if $V(C)\cap X = \{ x \}$ holds and $C-x$ is an $M$-saturated path.  
In the first case, we say the $M$-ear is {\em proper}. 
Even in the second case, we call $x$ an end of the $M$-ear for convenience.  
An $M$-ear is {\em trivial} if it consists of only a single edge. 
We say an $M$-ear {\em traverses} a set of vertices $Y$ if it has a vertex other than the ends that is in $Y$.

\subsection{Fundamental Properties} \label{sec:pre:props}
We now present elementary lemmas that will be used in later sections. 
They are easy to confirm. 
\begin{lemma}\label{lem:delete2path} 
Let $G$ be a factorizable graph, and $M$ be a perfect matching of $G$. 
Given  two distinct vertices $u,v\in V(G)$, 
$G-u-v$ is factorizable if and only if there is an $M$-saturated path between  $u$ and $v$. 
\end{lemma}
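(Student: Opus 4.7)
The plan is to deploy the classical symmetric-difference trick between two perfect matchings. For each direction, I would construct the desired object --- either a perfect matching of $G - u - v$ or an $M$-saturated $u$--$v$ path --- from the symmetric difference of $M$ with another appropriate matching, using only a short degree/parity count.

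For the ``if'' direction, suppose $P$ is an $M$-saturated path between $u$ and $v$. I would show that $M' := M \triangle E(P)$ is a perfect matching of $G - u - v$. Because $E(P) \cap M$ is by definition a perfect matching of $P$, the $M$-edges incident to $u$ and $v$ lie on $P$ and are therefore removed in $M'$, so $u$ and $v$ become uncovered; the internal vertices of $P$ are then matched by the edges $E(P) \setminus M$, which form a perfect matching of $V(P) \setminus \{u, v\}$; vertices outside $V(P)$ retain their $M$-partners. Hence $M'$ is a perfect matching of $G - u - v$.

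For the ``only if'' direction, suppose $N$ is a perfect matching of $G - u - v$, and consider the edge set $F := M \triangle N$ inside $G$. A simple degree count shows that every vertex in $V(G) \setminus \{u, v\}$ has degree $0$ or $2$ in $F$ (being covered once by $M$ and once by $N$), while $u$ and $v$ have degree exactly $1$ (covered by $M$ but not by $N$). So each connected component of $F$ is either an even circuit or a path whose ends are precisely its odd-degree vertices; since $u$ and $v$ are the only odd-degree vertices of $F$, one such component is a single path $P$ with ends $u$ and $v$. The edges of $P$ alternate between $M$ and $N$, and since both terminal edges lie in $M$, the set $E(P) \cap M$ is a perfect matching of $P$; that is, $P$ is $M$-saturated.

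There is no serious obstacle here: the only thing to watch is the parity/degree bookkeeping, and the degenerate case $uv \in M$ causes no trouble (the path $P$ is then just the single edge $uv$ in either direction). This is essentially the classical alternating-path lemma in slightly disguised form, which is presumably why the authors label it as ``easy to confirm.''
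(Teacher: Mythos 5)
Your proof is correct and is precisely the standard symmetric-difference argument that the paper implicitly invokes by declaring the lemma ``easy to confirm'' (the paper supplies no explicit proof). Both directions, including the degenerate case $uv\in M$, are handled properly.
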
 

\begin{lemma}\label{lem:circ2matching} 
Let $G$ be a factorizable graph and $M$ be a perfect matching of $G$. 
Let $C$ be an $M$-alternating circuit of $G$. 
Then\footnote{We denote the symmetric difference of two sets $A$ and $B$ by $A\triangle B$. }, $M\triangle E(C)$ is a perfect matching of $G$, 
and therefore the edges of $C$ are all allowed.  
\end{lemma}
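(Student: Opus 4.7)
The plan is to verify that toggling the edges of the circuit $C$ on $M$ preserves the perfect-matching property, and then derive the ``allowed'' conclusion as an immediate corollary.

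First I would partition $E(C)$ into $M_1 := E(C) \cap M$ and $M_2 := E(C) \setminus M$. By hypothesis $M_1$ is a perfect matching of $C$, so every vertex of $C$ is incident to exactly one edge of $M_1$. Since $C$ is a circuit in which every vertex has degree $2$, the remaining edges $M_2$ also saturate every vertex of $C$ exactly once; in particular $M_2$ is itself a perfect matching of $C$. (This uses only that $C$ is a circuit and that $M_1$ is a perfect matching of $C$; implicitly $|V(C)|$ must be even.)

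Next I would compute $M':= M \triangle E(C) = (M \setminus M_1) \cup M_2$ and check the two things needed to call $M'$ a perfect matching of $G$: every vertex is covered, and no vertex is covered twice. For a vertex $v \notin V(C)$, no edge of $E(C)$ is incident to $v$, so the edges of $M'$ at $v$ coincide with those of $M$ at $v$, giving exactly one. For a vertex $v \in V(C)$, the edges of $M$ at $v$ split into the unique edge $e_1 \in M_1$ incident to $v$ and (possibly) one edge of $M \setminus E(C)$; the former is removed in passing to $M'$, and in its place we add the unique edge $e_2 \in M_2$ incident to $v$. Thus $v$ remains covered by exactly one edge of $M'$, because $M$ itself covered $v$ exactly once and we have performed a one-for-one swap at $v$.

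Finally, for the ``allowed'' conclusion: an arbitrary edge $e \in E(C)$ lies either in $M_1 \subseteq M$, in which case it is allowed by the hypothesis that $M$ is a perfect matching, or in $M_2 \subseteq M'$, in which case it is allowed by the perfect matching $M'$ just constructed. No step here should pose a genuine obstacle; the only point that requires care is the bookkeeping at vertices of $V(C)$, namely keeping straight that exactly one $M$-edge at each such vertex is exchanged and the rest of the incidence structure is unchanged.
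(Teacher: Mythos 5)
Your argument is correct and complete. Note that the paper does not actually prove this lemma: it appears in Section~2.2 among ``elementary lemmas\dots easy to confirm,'' so you are simply supplying the standard verification the authors chose to omit, and your approach (toggle $E(C)$ on $M$, check coverage vertex by vertex, then read off allowedness of both $M_1$ and $M_2$ edges from $M$ and $M\triangle E(C)$ respectively) is exactly the expected one. One small tidying remark: at a vertex $v\in V(C)$ the unique $M$-edge at $v$ is \emph{forced} to be $e_1\in M_1$, since $M$ is a matching and $e_1\in M$ is already incident to $v$; so the case you hedge as ``(possibly) one edge of $M\setminus E(C)$'' cannot in fact occur, and the one-for-one swap you describe is the whole story at every vertex of $C$.
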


\subsection{Canonical Decomposition for General Factorizable Graphs} 
\label{sec:pre:cathedral} 
We now introduce the canonical decomposition given by Kita~\cite{DBLP:conf/isaac/Kita12,DBLP:journals/corr/abs-1205-3816}, 
which will be used in Sections~\ref{sec:tower} and \ref{sec:newproof} as the only preliminary result to derive the Tight Cut Lemma.  
The principal results that constitute the theory of this canonical decomposition  
are Theorems~\ref{thm:order}, \ref{thm:sim}, and \ref{thm:cor}. 
In this section, unless otherwise stated, 
$G$ denotes a factorizable graph.  
\begin{definition} 
Let $\hat{M}$ be the union of all perfect matchings of $G$.  
A {\em factor-component} of $G$ is the subgraph induced by $V(C)$,  
where $C$ is a connected component of the subgraph of $G$ determined by $\hat{M}$. 
The set of factor-components of $G$ is denoted by $\comp{G}$. 
That is to say, a factorizable graph consists of factor-components and edges joining distinct factor-components. 
A {\em separating set} of $G$ is a set of vertices that is the union of the vertex sets of some factor-components of $G$. 
Note that if $X\subseteq V(G)$ is a separating set, then $\delta_{G}(X)\cap M = \emptyset$ for any perfect matching $M$ of $G$. 
\end{definition} 
\begin{definition} 
Given $G_1,G_2\in\comp{G}$, 
we say $G_1\paryield{G} G_2$ if there is a separating set $X\subseteq V(G)$ 
such that $V(G_1)\cup V(G_2)\subseteq X$ holds and $G[X]/V(G_1)$ is a factor-critical graph. 
We sometimes denote $\paryield{G}$ simply by $\yield$. 
\end{definition} 
The next theorem is highly analogous to the known  {\em Dulmage-Mendelsohn decomposition} for bipartite graphs~\cite{lovasz2009matching}, in that it describes a partial order over $\comp{G}$:     
\begin{theorem}[Kita~\cite{DBLP:journals/corr/abs-1205-3816, DBLP:conf/isaac/Kita12}]\label{thm:order}
In any factorizable graph, $\yield$ is a partial order over $\comp{G}$. 
\end{theorem}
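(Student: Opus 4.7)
The plan is to verify the three axioms of a partial order for $\yield$ on $\comp{G}$: reflexivity, transitivity, and antisymmetry.

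Reflexivity is immediate. For any $G_1\in\comp{G}$, take $X:=V(G_1)$; then $X$ is a separating set and $G[X]/V(G_1)$ consists of a single vertex, which is factor-critical by the trivial-graph convention, so $G_1\yield G_1$.

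For transitivity, given $G_1\yield G_2$ via a separating set $X_{12}$ and $G_2\yield G_3$ via a separating set $X_{23}$, I would take $X:=X_{12}\cup X_{23}$, which remains a separating set since it is again a union of factor-component vertex sets. To show $G[X]/V(G_1)$ is factor-critical I would fix an arbitrary vertex $w$ of the quotient and produce a perfect matching of the quotient minus $w$. When $w=v_{G_1}$ the required matching comes from pasting a perfect matching of $G[X_{12}\setminus V(G_1)]$ (obtained by removing $v_{G_1}$ from a perfect matching of $G[X_{12}]/V(G_1)$) with a perfect matching of the separating set $G[X_{23}\setminus X_{12}]$. When $w\in X_{12}\setminus V(G_1)$ I would lift a perfect matching of $G[X_{12}]/V(G_1)-w$ and again extend by a perfect matching of $G[X_{23}\setminus X_{12}]$. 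The subtlest subcase is $w\in X_{23}\setminus X_{12}$: one lifts a perfect matching of $G[X_{23}]/V(G_2)-w$, which saturates a unique vertex $u\in V(G_2)$ through an edge of $\delta(V(G_2))$, and must combine it with a perfect matching of $G[X_{12}]/V(G_1)-u$; the two pieces have to be reconciled on the overlap $(X_{12}\cap X_{23})\setminus V(G_1)\setminus V(G_2)$, which is where the detailed verification will concentrate.

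Antisymmetry is the main obstacle. Assume $G_1\yield G_2$ and $G_2\yield G_1$ via separating sets $X_{12}$ and $X_{21}$ with $G_1\neq G_2$. Fix a perfect matching $M$ of $G$; by the separating-set property, $M$ restricts to a perfect matching of each factor-component and uses no edges of $\delta(V(G_1))$ or $\delta(V(G_2))$. From the factor-criticality of $G[X_{12}]/V(G_1)$, by deleting a suitable vertex of $V(G_2)$ in the quotient, I would extract a matching using exactly one edge of $\delta(V(G_1))$ and covering all remaining non-$V(G_1)$ vertices; symmetrically from $G[X_{21}]/V(G_2)$. Symmetric-differencing these two matchings together with $M$ should produce an $M$-alternating subgraph passing through both $V(G_1)$ and $V(G_2)$ and containing an edge of $\delta(V(G_1))$. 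The plan is to extract from it an $M$-alternating circuit and invoke Lemma~\ref{lem:circ2matching}, obtaining a perfect matching of $G$ using an edge of $\delta(V(G_1))$, which contradicts the fact that $V(G_1)$ is a separating set. The hardest step will be verifying that the symmetric-difference subgraph actually contains an alternating \emph{circuit} crossing $\delta(V(G_1))$, rather than decomposing into alternating paths that evade such a crossing; this closure argument is likely to require Lemma~\ref{lem:delete2path} to locate suitable $M$-saturated paths inside $G[V(G_1)]$ and $G[V(G_2)]$ that stitch the fragments into a closed alternating cycle.
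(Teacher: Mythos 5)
The paper does not contain a proof of this theorem: it is imported as background from Kita's earlier work (\cite{DBLP:conf/isaac/Kita12,DBLP:journals/corr/abs-1205-3816}), and the remark at the end of Section~\ref{sec:pre:cathedral} only notes that those proofs proceed from scratch by elementary matching arguments. There is therefore no ``paper's own proof'' to compare your proposal against, and any reasonable attempt must be judged on its own terms.

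On its own terms, your sketch has genuine gaps in both of the hard axioms. For transitivity, taking $X := X_{12}\cup X_{23}$ is a natural first move, but the subcase $w\in X_{23}\setminus X_{12}$ is not a detail to be polished later---it is the crux. After lifting a perfect matching of $G[X_{23}]/V(G_2)-w$ you obtain a matching that already saturates every vertex of $X_{23}\setminus V(G_2)\setminus\{w\}$, and in particular every vertex of $(X_{12}\cap X_{23})\setminus V(G_2)$; the matching of $G[X_{12}]/V(G_1)-a$ you then want to adjoin also saturates those same vertices, in general along different edges. There is no ``reconciliation'' step that merely chooses between the two: you would need to argue that the matching on $X_{23}$ can be replaced on the overlap by the one coming from $X_{12}$ without destroying the perfect-matching property on $X_{23}\setminus X_{12}$, or else work with a different separating set. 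Nothing in the definition guarantees either, and without an additional structural lemma (such as an $M$-ear or alternating-path characterization of $\yield$, which is where Lemmas~\ref{lem:ear2comp} and~\ref{lem:immediate} are pointing) this step does not go through.

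For antisymmetry, the plan is even more speculative. You want to symmetric-difference a fixed perfect matching $M$ with two near-perfect matchings extracted from the two factor-critical quotients and then ``extract'' an $M$-alternating circuit crossing $\delta(V(G_1))$. The symmetric difference of three matchings is not a disjoint union of alternating paths and circuits, so the usual parity bookkeeping does not apply directly; and even if you symmetric-difference two at a time, there is no reason the resulting alternating components pass through $\delta(V(G_1))$ rather than staying inside one side. The appeal to Lemma~\ref{lem:delete2path} to ``stitch the fragments'' is not a proof step---it is precisely the missing idea. In short, the proposal correctly identifies the three axioms and disposes of reflexivity, but the arguments for transitivity and antisymmetry are outlines of difficulties rather than resolutions of them, and as written they would not close.
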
 
Under Theorem~\ref{thm:order}, 
we denote the poset of $\yield$ over $\comp{G}$ by $\poset{G}$. 
For $H\in\comp{G}$, the set of upper bounds of $H$ in $\poset{G}$ is denoted by $\parupstar{G}{H}$. 
The union of vertex sets of all upper bounds of $H$ is denoted by $\vparupstar{G}{H}$. 
We denote $\parupstar{G}{H}\setminus \{H\}$ by $\parup{G}{H}$ 
and $\vparupstar{G}{H}\setminus V(H)$ by $\vparup{G}{H}$. 
We sometimes write them by omitting the subscripts ``$G$''.   
\begin{definition} 
Given $u,v\in V(G)$, 
we say $u\gsim{G} v$ if $u$ and $v$ are contained in the same factor-component and 
$G-u-v$ has no perfect matching. 
\end{definition}  
\begin{theorem}[Kita~\cite{DBLP:journals/corr/abs-1205-3816, DBLP:conf/isaac/Kita12}]\label{thm:sim}
In any factorizable graph $G$, $\gsim{G}$ is an equivalence relation on $V(G)$. 
Each equivalence class is contained in the vertex set of a factor-component. 
\end{theorem}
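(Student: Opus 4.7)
The reflexivity of $\gsim{G}$ follows from parity: $G$ admits a perfect matching, so $|V(G)|$ is even, and therefore $G-v$ has odd order and no perfect matching for every $v\in V(G)$; combined with the trivial fact that $v$ lies in the same factor-component as itself, this gives $v\gsim{G} v$. Symmetry is built into the symmetric form of the definition, and the claim that each equivalence class is contained in a single factor-component follows directly from the ``same factor-component'' clause. The substance of the theorem is thus transitivity.

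My plan is to use Lemma~\ref{lem:delete2path} to translate the relation into path terms: for any fixed perfect matching $M$ of $G$, one has $u\gsim{G} v$ iff $u$ and $v$ lie in a common factor-component and no $M$-saturated $u$-$v$ path exists in $G$. Assume $u\gsim{G} v$ and $v\gsim{G} w$, so that all three vertices lie in a common factor-component $H$, and assume for contradiction that $u\not\gsim{G} w$. Fix a perfect matching $M$ of $G$ and an $M$-saturated $u$-$w$ path $P$ supplied by Lemma~\ref{lem:delete2path}. If $v\in V(P)$, I split $P$ at $v$ into the subpaths $P_1=uPv$ and $P_2=vPw$; since $P$ begins and ends with an $M$-edge, a parity count of $M$-edges along $P$ shows that exactly one of $P_1,P_2$ is itself $M$-saturated, which immediately contradicts either $u\gsim{G} v$ or $v\gsim{G} w$ via Lemma~\ref{lem:delete2path}.

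The remaining case $v\notin V(P)$ is where I expect the real obstacle. Here I plan to exploit $u,v\in V(H)$: by the definition of factor-components, $V(H)$ spans a connected component of the subgraph $(V(G),\hat{M})$, where $\hat{M}$ denotes the union of all perfect matchings of $G$. Hence there is a path $Q$ in this subgraph from $v$ to $V(P)$, each of whose edges lies in some perfect matching of $G$ and therefore sits on an $M$-alternating circuit obtained, via Lemma~\ref{lem:circ2matching}, as a connected component of the symmetric difference of $M$ with a perfect matching containing that edge. By performing a finite sequence of alternating-circuit swaps, beginning near $V(P)$ and working back toward $v$, I would produce a new perfect matching $M'$ together with an $M'$-saturated $u$-$w$ path $P'$ that now passes through $v$, thereby reducing the situation to the first case. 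The delicate point is to choose each swap so that it preserves both $u$ and $w$ as endpoints of a $u$-$w$ path and its $M$-saturated structure while steadily routing the path through $v$; I expect an induction on the distance from $v$ to $V(P)$ inside the $\hat{M}$-subgraph to make this precise.
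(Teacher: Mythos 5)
Your handling of reflexivity (parity of $|V(G)|$), symmetry, the ``same factor-component'' containment, and the case $v\in V(P)$ of transitivity via Lemma~\ref{lem:delete2path} is correct; also note that this theorem is in fact \emph{cited} by the paper from Kita's earlier work rather than proved here, so there is no in-paper proof to compare against. However, the case $v\notin V(P)$ --- which is really the entire substance of the theorem --- has a genuine gap, and the sketch you offer does not close it.

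The difficulty is that your strategy in the remaining case is to manufacture a perfect matching $M'$ together with an $M'$-saturated $u$--$w$ path through $v$, and then invoke your first case. But observe that, under the standing hypotheses $u\gsim{G} v$ and $v\gsim{G} w$, your own first-case argument already shows that \emph{no} perfect matching $M'$ admits an $M'$-saturated $u$--$w$ path through $v$: any such path would split at $v$ into an $M'$-saturated half, contradicting one of $u\gsim{G} v$ or $v\gsim{G} w$. So the object you are proposing to build cannot exist except as the endpoint of a proof by contradiction, and exhibiting it is precisely as hard as establishing transitivity itself; it cannot be reduced to ``a finite sequence of alternating-circuit swaps'' without a concrete mechanism. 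The concrete obstructions are: (a) an edge of the $\hat M$-path from $v$ towards $V(P)$ need not lie on an $M$-alternating circuit that is \emph{disjoint} from $P$, and when the swapping circuit meets $P$ the symmetric difference $P\triangle C$ need not contain any $u$--$w$ path, let alone an $M'$-saturated one; (b) even when each swap preserves some $M'$-saturated $u$--$w$ path, there is no control ensuring the new path actually runs through $v$ rather than re-routing around it; and (c) the proposed induction on ``distance from $v$ to $V(P)$ in the $\hat M$-subgraph'' has no stated invariant that decreases while preserving both the saturated $u$--$w$ connection and the progress towards $v$. Any of these would need a precise argument, and in their absence the case $v\notin V(P)$ --- the case that distinguishes this theorem from a routine parity observation --- remains unproved.
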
 
Given $H\in\comp{G}$, we denote by $\pargpart{G}{H}$ the family of equivalence classes of $\gsim{G}$ 
that are contained in $V(H)$. Note that $\pargpart{G}{H}$ gives a partition of $V(H)$.  
The structure given by Theorem~\ref{thm:sim} is called the {\em generalized Kotzig-Lov\'asz partition} 
as it is a generalization of the results given by Kotzig~\cite{kotzig1959z1,kotzig1960z3,kotzig1959z2} and  Lov\'asz~\cite{lovasz1972structure}.  

Even though Theorems~\ref{thm:order} and \ref{thm:sim} were established independently, 
a natural relationship between the two is shown by the next theorem.  

\begin{theorem}[Kita~\cite{DBLP:conf/isaac/Kita12,DBLP:journals/corr/abs-1205-3816}]\label{thm:cor}
Let $G$ be a factorizable graph, and let $H\in\comp{G}$. 
Let $K$ be a connected component of $G[\vparup{G}{H}]$. 
Then, there exists $S\in\pargpart{G}{H}$ with $\parNei{G}{K}\cap V(H) \subseteq S$. 
\end{theorem}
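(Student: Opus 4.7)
The plan is to argue by contradiction. Suppose $u, v \in \parNei{G}{K} \cap V(H)$ but $u \not\gsim{G} v$. Since $u$ and $v$ both lie in the factor-component $H$ they are automatically in the same factor-component, so the failure $u \not\gsim{G} v$ forces $G - u - v$ to be factorizable; fix a perfect matching $N$ of $G - u - v$. Choose $u' \in V(K) \cap \Nei{u}$ and $v' \in V(K) \cap \Nei{v}$ with $u' \neq v'$ (the degenerate case $u' = v'$ would be handled by a small variant of the same idea). Because $V(K) \subseteq \vparup{G}{H}$ is disjoint from $V(H)$, both $uu'$ and $vv'$ belong to $\delta_G(V(H))$. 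Now $V(H)$ is a separating set of $G$, so every perfect matching of $G$ is disjoint from $\delta_G(V(H))$; the target contradiction is therefore the existence of a perfect matching of $G$ that meets $\delta_G(V(H))$.

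Such a perfect matching follows immediately from showing that $G - u - v - u' - v'$ is factorizable: any perfect matching of it, together with the edges $uu'$ and $vv'$, becomes a perfect matching of $G$ that contains $uu' \in \delta_G(V(H))$. By Lemma~\ref{lem:delete2path} applied to $G - u - v$ with the matching $N$, the factorizability of $G - u - v - u' - v'$ is equivalent to the existence of an $N$-saturated path from $u'$ to $v'$ inside $G - u - v$. The whole problem thus reduces to producing such a path.

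To produce it I would exploit the factor-critical structure supplied by $\yield$. For every factor-component $H' \in \comp{G}$ with $V(H') \subseteq V(K)$ one has $H \yield H'$, and the definition of $\yield$ furnishes a separating set $X_{H'}$ of $G$ with $V(H) \cup V(H') \subseteq X_{H'}$ and $G[X_{H'}]/V(H)$ factor-critical. Each such factor-critical contraction, lifted back to $G$, yields near-perfect matchings of $G[X_{H'}]$ that can be used to build alternating subgraphs relating any two vertices of $V(H')$ while leaving a prescribed third vertex exposed. Traversing $V(K)$ from $u'$ to $v'$ along a path inside the connected graph $K$, I would string together the resulting local alternating segments, one per factor-component visited, into a single $N$-saturated $u'$-$v'$ path in $G - u - v$.

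The main obstacle is this concatenation. The matching $N$ is globally fixed and need not be compatible with the near-perfect matchings supplied by the individual factor-critical contractions, so the alternating pattern threatens to break at the boundaries between successive factor-components traversed by the path in $K$. I anticipate resolving this either by induction on the number of factor-components crossed, or by a surgical argument that replaces $N$-segments by locally chosen near-perfect matchings in a coordinated fashion while preserving the global perfect matching structure elsewhere. The freedom to vary $u'$ and $v'$ among the neighbors of $u, v$ in $V(K)$ provides an additional degree of flexibility I expect to exploit to align the local pieces.
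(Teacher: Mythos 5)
Your overall target --- force a perfect matching of $G$ that uses an edge of $\delta_G(V(H))$, contradicting the fact that $V(H)$ is a separating set --- is the right kind of contradiction to aim for, and the reduction of ``$G-u-v-u'-v'$ is factorizable'' to ``there is an $N$-saturated $u'$--$v'$ path in $G-u-v$'' via Lemma~\ref{lem:delete2path} is sound. But the proof is not complete, and two of the gaps look essential rather than routine.

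First, the degenerate case $u'=v'$ is not a technicality you can wave away: it is perfectly possible that $V(K)\cap\parNei{G}{u}=V(K)\cap\parNei{G}{v}=\{w\}$ for a single vertex $w$, and then your construction collapses, because $G-u-v-w$ has odd order and there is no way to place both $uw$ and $vw$ in a matching. A ``small variant'' would have to be an entirely different argument (for instance, showing $uw$ itself is allowed), and nothing in what you wrote indicates how to run it. Second --- and you acknowledge this yourself --- the production of the $N$-saturated $u'$--$v'$ path inside $G-u-v$ is not carried out. This is the whole content of the proof, and the difficulty you identify (the local near-perfect matchings coming from the factor-critical contractions $G[X_{H'}]/V(H)$ need not agree with the globally fixed $N$, so the alternation pattern breaks at component boundaries) is genuine and is not resolved by the induction or ``surgical replacement'' you gesture at. There is a further structural mismatch you do not mention: $N$ is a matching of $G-u-v$, while every tool you intend to invoke ($\yield$, $\comp{G}$, the factor-critical separating sets from the definition of $\yield$) lives on $G$ and its perfect matchings; the factor-components and the order relation of $G-u-v$ can differ from those of $G$, so the structure does not transfer automatically. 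It would be safer to fix a perfect matching $M$ of $G$, use Lemma~\ref{lem:delete2path} to obtain an $M$-saturated $u$--$v$ path in $G$, and then try to close an $M$-alternating circuit through $K$ --- that keeps all the reasoning inside the single ambient structure $(G,M)$ the theory is built on, and Lemma~\ref{lem:circ2matching} then gives the contradiction directly when the circuit crosses $\delta_G(V(H))$. Even with that reformulation, the careful case analysis needed to make the $M$-exposed path through $K$ and the $M$-saturated path in $\upstar{H}$ meet only at $u$ and $v$ is where the real work lies, and it is missing here.
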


Intuitively, Theorem~\ref{thm:cor} states that each proper upper bound of a factor-component $H$ 
is tagged with a single member from $\pargpart{G}{H}$. 
As a result of Theorem~\ref{thm:cor}, 
the two structures given by Theorems~\ref{thm:order} and \ref{thm:sim} are unified naturally 
to produce a new canonical decomposition that enables us to analyze a factorizable graph as a building-like structure in which each factor-component serves as a floor and each equivalence class serves as a foundation. 

As given in Theorem~\ref{thm:cor}, 
for $H\in\comp{G}$ and $S\in\pargpart{G}{S}$,  we define $\parup{G}{S} \subseteq \parup{G}{H}$  as follows: 
$I\in\parup{G}{H}$ is in $\parup{G}{S}$ 
if the connected component $K$ of $G[\vparup{G}{H}]$ with $V(I)\subseteq V(K)$ 
satisfies $\parNei{G}{K}\cap V(H)\subseteq S$.   
The union of vertex sets of factor-components in $\parup{G}{S}$ is denoted by $\vparupstar{G}{S}$. 
The sets $\vparupstar{G}{S}\setminus S$ and $\vparupstar{G}{H}\setminus\vparupstar{G}{S}$ are denoted by $\vparup{G}{S}$ and  $\vparcoup{G}{S}$, respectively. 
Note that the family $\{\vupstar{S} : S\in\pargpart{G}{H}\}$ (resp. $\{ \vup{S} : S\in\pargpart{G}{H} \}$) gives a partition of $\vupstar{H}$ (resp. $\vup{H}$).  
We sometimes omit the subscript ``$G$'' if the meaning is apparent from the context.  

In the remainder of this section, we present some pertinent properties that will be used in later sections. 
  
\begin{lemma}[Kita~\cite{DBLP:conf/cocoa/Kita13,DBLP:journals/corr/abs-1212-5960}]\label{lem:tpath} 
Let $G$ be a factorizable graph and $M$ be a perfect matching of $G$, 
and let $H\in\comp{G}$. 
Let $S\in\pargpart{G}{H}$,  and let $T\in\pargpart{G}{H}$ be such with $S\neq T$. 
\begin{romanenumerate} 
\item \label{item:tpath:up2base}
For any $x\in \vupstar{S}$, there exists $y\in S$ such that 
there is an $M$-balanced path from $x$ to $y$ whose vertices except for $y$ are in $\vup{S}$. 
\item \label{item:tpath:base2base}
For any $x\in S$ and any $y\in T$, 
there is an $M$-saturated path between $x$ and $y$ whose vertices are in $\vupstar{H}\setminus\vup{S}\setminus\vup{T}$. 
\item \label{item:tpath:base2coup}
For any $x\in S$ and any $y\in\vcoup{S}$, 
there is an $M$-balanced path from $x$ to $y$ whose vertices are in $\vupstar{H}\setminus \vup{S}$. 
\item \label{item:tpath:up2up}
For any $x\in \vupstar{S}$ and any $y\in\vupstar{T}$ 
there is an $M$-saturated path between $x$ and $y$ whose vertices are in $\vupstar{H}$. 
\end{romanenumerate} 
\end{lemma}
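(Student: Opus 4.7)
The plan is to prove the four parts together, taking (ii) as the cleanest entry point and bootstrapping (i), (iii), and (iv) from it. Four tools will recur throughout: (a) $M$ has no edge crossing the boundary of any separating set, so in particular $M$ restricted to each factor-component is a perfect matching of that component; (b) Lemma~\ref{lem:delete2path}, which supplies an $M$-saturated path between $u$ and $v$ whenever $G-u-v$ is factorizable; (c) the characterization of $\gsim{G}$, which makes $G-u-v$ factorizable as soon as $u,v$ lie in the same factor-component but in distinct $\gsim{G}$-classes; and (d) Theorem~\ref{thm:cor}, which tags every connected component $K$ of $G[\vup{H}]$ with a unique class $S'\in\pargpart{G}{H}$ satisfying $\parNei{G}{K}\cap V(H)\subseteq S'$.

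For (ii), since $x\in S$ and $y\in T$ with $S\neq T$ lie in the same factor-component $H$ but in distinct $\gsim{G}$-classes, tool (c) together with Lemma~\ref{lem:delete2path} yields an $M$-saturated path $Q$ between them. To confine $Q$ to $\vupstar{H}\setminus\vup{S}\setminus\vup{T}$, I would apply (d): any excursion of $Q$ into a component $K\subseteq\vup{H}$ tagged by $S$ (resp.\ $T$) enters and leaves through $S$ (resp.\ $T$), and such an excursion can be rerouted through the factor-critical contraction $G[X]/V(H)$ supplied by $\paryield{G}$ on a suitable separating set $X$, iterating to kill all offending detours. For (i), the nontrivial case is $x\in\vup{S}$; let $I\in\parup{G}{S}$ be its factor-component, let $K$ be the component of $G[\vup{H}]$ that contains $V(I)$, and let $X\supseteq V(H)\cup V(I)$ make $G[X]/V(H)$ factor-critical. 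Starting at $x$ via its $M$-edge and alternately following $M$- and non-$M$-edges inside $V(K)$, the factor-critical structure forces such an $M$-alternating trail eventually to exit $V(K)$ via a non-$M$-edge into $V(H)$; the exit vertex lies in $S$ by (d), and a shortest such trail gives the required $M$-balanced path.

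Parts (iii) and (iv) then reduce to concatenations of (i) and (ii). For (iv), an (i)-style balanced path pulls $x\in\vupstar{S}$ down to some $s\in S$ inside $\vupstar{S}$, an (ii)-style saturated path joins $s$ to some $t\in T$ inside $\vupstar{H}\setminus\vup{S}\setminus\vup{T}$, and an (i)-style balanced path reversed extends from $t$ to $y\in\vupstar{T}$ inside $\vupstar{T}$; the alternation patterns agree at $s$ (non-$M$ from the first piece, $M$ from the second) and symmetrically at $t$, so the concatenation is $M$-saturated between $x$ and $y$. For (iii) I would split on whether $y\in V(H)\setminus S$ or $y\in\vup{T}$ for some $T\neq S$ and combine (ii) with (i) in the same way. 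The main obstacle throughout will be the confinement to the prescribed vertex set: Theorem~\ref{thm:cor} does most of that work, closed $M$-alternating sub-walks can be shortened via symmetric-difference arguments in the spirit of Lemma~\ref{lem:circ2matching}, and extracting a simple path from the resulting $M$-alternating walk is a standard final cleanup.
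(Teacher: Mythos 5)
The paper does not actually prove Lemma~\ref{lem:tpath}; it imports it verbatim from Kita's earlier work (the two cited references) as a preliminary result, so there is no in-paper proof to compare your attempt against. Evaluating the proposal on its own merits: the skeleton --- Lemma~\ref{lem:delete2path} for bare existence of $M$-saturated paths, factor-criticality of the contraction supplied by $\yield$ plus Theorem~\ref{thm:cor} for confinement, and concatenation for (iii)--(iv) --- points in a plausible direction, but every load-bearing step is a gesture rather than an argument. In (ii) you do get an $M$-saturated path $Q$ between $x$ and $y$ in $G$, yet the entire content of the lemma is the \emph{confinement}, and ``reroute an excursion through the factor-critical contraction, iterating'' does not explain how to replace an offending sub-walk by one lying in $\vupstar{H}\setminus\vup{S}\setminus\vup{T}$ without destroying $M$-alternation; moreover you never address why $Q$ should even lie in $\vupstar{H}$, and the device in this paper that would yield that containment (Lemma~\ref{lem:towerrelative}) is proved \emph{later from} Lemma~\ref{lem:tpath}, so it cannot be invoked without circularity.

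In (i), the alternating path you can extract from factor-criticality of $G[X]/V(H)$ lives in all of $X$, and $X$ may contain vertices of $\vup{T}$ for other classes $T$ or even vertices outside $\vupstar{H}$; Theorem~\ref{thm:cor} identifies the foot as lying in $S$ only when the excursion stays inside the single component $K$ of $G[\vup{H}]$, which your greedy trail is not forced to do, and passing from an $M$-alternating trail to a simple $M$-balanced path is precisely the blossom phenomenon, not ``a standard final cleanup.'' Finally, in the concatenation for (iv), the three confinement regions pairwise intersect in $S$ and in $T$, not merely in the chosen junction vertices $s$ and $t$, so disjointness of the three path pieces is a genuine unaddressed obligation rather than an automatic consequence of the containments. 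In short, the gaps are exactly the parts that make this lemma a theorem and not an observation; supplying them would essentially be a fresh proof of the generalized Kotzig--Lov\'asz machinery.
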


\begin{lemma}[Kita~\cite{DBLP:journals/corr/abs-1205-3816, DBLP:conf/isaac/Kita12}]\label{lem:ear2comp}
Let $G$ be a factorizable graph, and let $M$ be a perfect matching of $G$.  
If there is an $M$-ear relative to $H_1\in\comp{G}$ 
and traversing $H_2\in\comp{G}$, 
then $H_1\yield H_2$ holds. 
\end{lemma}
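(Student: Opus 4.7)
The plan is to exhibit a separating set $X$ with $V(H_1)\cup V(H_2)\subseteq X$ such that $G[X]/V(H_1)$ is factor-critical, which is exactly what the definition of $H_1\yield H_2$ requires. Let $P$ be the $M$-ear given by the hypothesis, and let $\mathcal{K}$ be the collection of factor-components other than $H_1$ that contain at least one internal vertex of $P$; thus $H_2\in\mathcal{K}$. I take $X := V(H_1)\cup\bigcup_{K\in\mathcal{K}}V(K)$, which is automatically a separating set as a union of factor-component vertex sets, and I write $G' := G[X]/V(H_1)$ with $h$ denoting the contracted vertex, together with $M' := \bigcup_{K\in\mathcal{K}}(M\cap E(K))$, which is a perfect matching of $G'-h$.

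The basic structural observation is that the image $C_P := P/V(H_1)$ is a simple odd cycle through $h$ in $G'$. The only vertices of $P$ identified under the contraction live in $V(H_1)$---the two endpoints in the proper case and the unique $V(H_1)$-vertex in the circuit case---while the internal vertices of $P$ are distinct and disjoint from $V(H_1)$; in either type of $M$-ear, the two edges of $C_P$ incident to $h$ come from non-$M$ edges of $P$, so $C_P$ is $M'$-alternating with $h$ as its sole $M'$-exposed vertex. The case $w=h$ of factor-criticality is then witnessed by $M'$ itself, and for $w\in V(C_P)\setminus\{h\}$ a standard blossom switch along $C_P$---toggling $M'$ with an appropriate end-segment of the even path $C_P-w$---yields a perfect matching of $G'-w$ that covers $h$.

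The technical heart of the proof is the remaining case, $w\in V(G')\setminus V(C_P)$, where one must find an $M'$-augmenting path between $h$ and the $M'$-partner $w^*$ of $w$ inside $G'-w$. My plan is a two-step concatenation: first, traverse $C_P$ from $h$, in either direction as needed by a parity argument, to reach a vertex $v_j\in V(C_P)\cap V(K)$ chosen so that the incoming edge at $v_j$ is non-$M'$; then continue inside $K$ by the $M$-saturated path from $v_j$ to $w^*$ supplied by Lemma~\ref{lem:tpath}(\ref{item:tpath:base2base}). A suitable $v_j$ exists because the ear $P$ contains at least one $M$-edge inside $K$ (the one matching any internal vertex of $P$ lying in $V(K)$ to its $M$-partner, which lies in $V(K)$ and also on $P$); the two endpoints of this $M$-edge are $M$-partners and so lie in distinct classes of the generalized Kotzig--Lov\'asz partition $\pargpart{G}{K}$ by Theorem~\ref{thm:sim}, so at least one is not $\gsim{G}$-equivalent to $w^*$ and is a legal source for Lemma~\ref{lem:tpath}(\ref{item:tpath:base2base}).

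The main obstacle I anticipate is ensuring that this concatenation really yields a simple $M'$-alternating path inside $G[X]$. The saturated inner path supplied by Lemma~\ref{lem:tpath}(\ref{item:tpath:base2base}) has its vertices only guaranteed to lie in $\vupstar{K}\setminus\vup{S_{v_j}}\setminus\vup{S_{w^*}}$, which may a priori contain vertices outside $X$; handling this either forces an enlargement of $X$ to $V(H_1)\cup\bigcup_{K\in\mathcal{K}}\vparupstar{G}{K}$ (still a separating set as a union of factor-components) followed by a re-examination of the easy cases, or a direct argument based on Theorem~\ref{thm:cor} that such detours can be rerouted to stay inside $X$. The parity bookkeeping at the splice vertex $v_j$, together with the simplicity of the concatenation (ensuring a path, not merely a walk), are further checks which I expect to be essentially routine once the compatibility between the poset $\yield$ and the partition $\gsim{G}$ provided by Theorems~\ref{thm:order}, \ref{thm:sim}, and \ref{thm:cor} is brought fully into play.
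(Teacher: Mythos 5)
The paper does not itself prove this lemma; it is quoted from Kita's earlier work~\cite{DBLP:journals/corr/abs-1205-3816,DBLP:conf/isaac/Kita12}, so there is no in-paper argument to compare against. Judged on its own, your framework is the natural one: take $X$ to be $V(H_1)$ together with the vertex sets of the factor-components the ear $P$ traverses, contract $V(H_1)$ to a vertex $h$, observe that $C_P := P/V(H_1)$ is a simple odd $M'$-alternating cycle with $h$ as its unique exposed vertex, and check factor-criticality of $G[X]/V(H_1)$. The two easy cases, $w=h$ and $w\in V(C_P)\setminus\{h\}$, are handled correctly.

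The case $w\in V(G')\setminus V(C_P)$ is where the argument is not closed, and you have partly flagged this yourself. First, a small slip: you should splice onto an $M$-saturated path from $v_j$ to $w$, not to $w^*$. An $M$-saturated path from $v_j$ to $w^*$ ends with the $M$-edge $w^*w$, so its penultimate vertex is the deleted vertex $w$ itself; you cannot both delete $w$ and route through it. Targeting $w$ gives the intended $M'$-alternating path from $h$ to $w$ whose last edge is $w^*w\in M'$, so that $M'\triangle E(\cdot)$ matches $h$ and exposes exactly $w$. (This also means $v_j$ should be chosen outside the $\pargpart{G}{K}$-class of $w$, not of $w^*$.) Second, and more seriously, Lemma~\ref{lem:tpath}(\ref{item:tpath:base2base}) only places the inner path in $\vparupstar{G}{K}$, which will generally exit your $X$. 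Enlarging $X$ to $V(H_1)\cup\bigcup_{K\in\mathcal{K}}\vparupstar{G}{K}$ keeps it a separating set and captures that path, but it also introduces new vertices $w$ (lying in the upper factor-components) for which $G'-w$ factorizability is again the hard case; the enlargement transforms the problem rather than closing it, and that analysis is not carried out. Third, the simplicity of the splice is not routine: every $K\in\mathcal{K}$ already contributes at least one $M$-edge to $C_P$, so the $h$-to-$v_j$ segment of $C_P$ and the inner path inside $\vparupstar{G}{K}$ can meet; a first-intersection/rerouting argument (and a check that the inner path does not re-enter $V(H_1)$) of the kind used in the proofs of Lemmas~\ref{lem:adjacent2arc} and \ref{lem:minseq2arc} is needed. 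Until the enlarged-$X$ case analysis and the disjointness of the splice are carried through, factor-criticality is not established, so the proof is incomplete.
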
 

From Lemma~\ref{lem:ear2comp}, the next lemma is easily derived. 
\begin{lemma}[Kita~\cite{DBLP:journals/corr/abs-1205-3816, DBLP:conf/isaac/Kita12}]\label{lem:vear2comp}
Let $G$ be a factorizable graph and $M$ be a perfect matching of $G$. 
Let $x\in V(G)$, and let $H\in\comp{G}$ be such with $x\in V(H)$. 
If there is an $M$-ear $P$ relative to $\{x\}$,  
then the connected components of $P-E(H)$ are $M$-ears relative to $H$. 
Hence, if $I\in\comp{G}$ has common vertices with $P$, 
then $H\yield I$ holds. 
\end{lemma}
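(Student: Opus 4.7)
The plan is to exploit two structural facts: since $H \in \comp{G}$, its vertex set $V(H)$ is a separating set, so no $M$-edge has exactly one endpoint in $V(H)$; and since $H$ is an induced subgraph, an edge of $P$ lies in $E(H)$ iff both endpoints lie in $V(H)$. Combined, every edge of $P$ that crosses the boundary of $V(H)$ is kept in $P - E(H)$ and is non-$M$, while every edge of $P$ lying in $E(H)$ has both endpoints in $V(H)$.

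I would first pin down the shape of $P$: a proper $M$-ear relative to the singleton $\{x\}$ is impossible because a path cannot have its two distinct ends both equal to $x$, so $P$ must be a circuit through $x$ with $P - x$ being $M$-saturated. In particular both $P$-edges at $x$ are non-$M$, while $P$ is $M$-alternating at every other vertex. Next I would analyze an arbitrary connected component $Q$ of $P - E(H)$ carrying at least one edge. If $Q = P$ (no edge of $P$ lies in $E(H)$), then any $v \in V(P) \cap V(H) \setminus \{x\}$ would have both of its $P$-edges crossing $V(H)$ and hence both non-$M$, violating $M$-alternation at a non-$x$ vertex; so $V(P) \cap V(H) = \{x\}$ and $Q = P$ satisfies the circuit definition of an $M$-ear relative to $H$. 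Otherwise $Q$ is a proper sub-path of $P$ whose two ends each border a removed $E(H)$-edge, forcing both ends into $V(H)$; the same crossing/alternation argument shows that every internal vertex of $Q$ other than $x$ lies outside $V(H)$; and the two end-edges of $Q$ are crossings, hence non-$M$, so $M$-alternation along $P$ forces $Q$ to be $M$-exposed. When $x$ is not internal to $Q$ this directly makes $Q$ a proper $M$-ear relative to $H$; when $x$ sits internally in $Q$, I would split $Q$ at $x$ into two sub-paths, each of which inherits the $M$-exposed property and is a proper $M$-ear relative to $H$.

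For the ``hence'' part, take any $I \in \comp{G}$ sharing a vertex with $P$. If $I = H$, then $H \yield H$ holds by choosing the separating set $V(H)$, since $H / V(H)$ is a single vertex and hence trivially factor-critical. If $I \neq H$, any common vertex $v$ of $I$ and $P$ lies outside $V(H)$, so it sits in a component $Q$ of $P - E(H)$ that (after the $x$-splitting if necessary) is an $M$-ear relative to $H$ traversing $V(I)$, and Lemma~\ref{lem:ear2comp} then yields $H \yield I$.

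The main obstacle is the mildly delicate case where $x$ appears as an internal vertex of the component containing it, since $x$ is the unique ``doubly non-$M$'' vertex of $P$ and its presence inside a component breaks the literal proper-ear form; this is resolved cleanly by cutting at $x$. Apart from this, the argument is a straightforward crossing/alternation case analysis driven by the separating-set property of $V(H)$.
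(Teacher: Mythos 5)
The paper itself gives no proof of Lemma~\ref{lem:vear2comp}: it is cited from Kita's earlier work, and the text only remarks that it is ``easily derived'' from Lemma~\ref{lem:ear2comp}. Your write-up is thus a genuine self-contained argument, and its structure is sound. The two key observations you use --- that $V(H)$ is a separating set (so $M$-edges never cross $\delta(V(H))$) and that $H$ is induced (so an edge of $P$ survives in $P - E(H)$ exactly when at least one of its endpoints is outside $V(H)$) --- are precisely the right levers. Your reduction of $P$ to a circuit through $x$ with $P-x$ $M$-saturated is correct, and your crossing/alternation analysis of a component $Q$ with $x \notin \mathrm{int}(Q)$ does give a proper $M$-ear relative to $V(H)$; the final deduction via Lemma~\ref{lem:ear2comp}, including the trivial $I=H$ case via reflexivity of $\yield$, is also right.

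The one place you should tighten up is the ``$x$ internal to $Q$'' case. You handle it by cutting $Q$ at $x$ and arguing the two halves are proper $M$-ears (which is fine and does deliver the ``Hence'' conclusion, the only part the paper actually invokes). But notice that if this case really arose, the lemma's first sentence --- that \emph{the components of $P-E(H)$ themselves} are $M$-ears relative to $H$ --- would be false as stated: a component with $x$ in its interior has the unique vertex where two consecutive non-$M$ edges meet, so it is not $M$-exposed, and it contains an internal vertex of $V(H)$, so it cannot be a proper ear. To actually prove the lemma as written you would need to rule this case out. And in fact it does not occur: if both $P$-edges at $x$ leave $V(H)$, then $V(P)\cap V(H)=\{x\}$. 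The reason is the pairing you already exploit --- every $M$-edge of $P$ lies entirely inside or entirely outside $V(H)$, so $V(P)\cap V(H)\setminus\{x\}$ is a union of $M$-matched pairs $\{u_j,u_{j+1}\}$ --- together with the observation that any such pair adjacent (along $P$) to a vertex of $V(H)$ forces a chain back to $x$; alternatively, assuming a deepest such pair gives a proper $M$-ear of $P$ whose two $V(H)$-ends can be joined (using Lemma~\ref{lem:tpath}) into an $M$-alternating circuit crossing $\delta(V(H))$, contradicting Lemma~\ref{lem:circ2matching}. You should either supply an argument of this kind, or explicitly acknowledge that your split proves only the ``Hence'' conclusion rather than the literal first sentence. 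Otherwise your proposal matches the intended derivation in spirit.
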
 

\begin{lemma}[Kita~\cite{DBLP:journals/corr/abs-1205-3816, DBLP:conf/isaac/Kita12}]\label{lem:immediate}
Let $G$ be a factorizable graph, 
and $M$ be a perfect matching of $G$. 
If $G_1\in\comp{G}$ is an immediate lower-bound of $G_2\in\comp{G}$ with respect to $\yield$,  then there is an $M$-ear relative to $G_1$ and traversing $G_2$. 
\end{lemma}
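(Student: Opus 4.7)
The plan is to exploit the factor-critical structure guaranteed by the definition of $\yield$ and to construct an $M$-ear by hand. Applying the definition of $G_1 \yield G_2$, I fix a separating set $X \subseteq V(G)$ with $V(G_1) \cup V(G_2) \subseteq X$ such that $\tilde G := G[X]/V(G_1)$ is factor-critical; let $w$ denote the contracted vertex. Because $X$ is separating, $M' := M \cap E(G[X])$ is a perfect matching of $G[X]$, which under the contraction becomes a near-perfect matching $\tilde M$ of $\tilde G$ missing only $w$.

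The heart of the argument is to produce, in $\tilde G$, a simple $\tilde M$-ear at $w$ passing through a prescribed vertex $v \in V(G_2)$. Let $v'$ be the $\tilde M$-partner of $v$; note $v' \in V(G_2)$ as well, since $vv' \in M$ stays inside the factor-component $G_2$. Factor-criticality yields perfect matchings $N$ of $\tilde G - v$ and $N'$ of $\tilde G - v'$, and the symmetric differences $\tilde M \triangle N$ and $\tilde M \triangle N'$ each contain a unique $\tilde M$-alternating path, namely $P$ from $w$ to $v$ and $P'$ from $w$ to $v'$, both ending with the $\tilde M$-edge $vv'$. Concatenating $P$ with the reverse of $P'$ along this shared edge produces a closed $\tilde M$-alternating walk at $w$ whose two edges incident to $w$ are both non-$\tilde M$ and which visits $v$. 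From this walk I extract a simple $\tilde M$-alternating circuit $C$ through $w$ and $v$ with $C - w$ being $\tilde M$-saturated, i.e., a $\tilde M$-ear at $w$ traversing $v$.

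Finally I lift $C$ back to $G[X]$. The two edges of $C$ incident to $w$ lift to edges $xu$ and $x'u'$ of $G$ with $x, x' \in V(G_1)$ and $u, u' \in X \setminus V(G_1)$, so $C$ becomes either an $M$-exposed path from $x$ to $x'$ (when $x \neq x'$) or an $M$-alternating circuit through $x = x'$ whose complement is $M$-saturated (when $x = x'$). In either case the lift is an $M$-ear of $G$ relative to $G_1$, and it traverses $V(G_2)$ because $v \in V(G_2) \cap V(C)$ while the ends of the ear lie in $V(G_1)$.

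The main technical obstacle is the extraction of the simple circuit $C$ from $P \cup P'$: the two alternating paths may share internal vertices, so their union is in general a closed alternating walk rather than a simple cycle. Peeling off a simple alternating circuit through both $w$ and $v$ will require a careful shortcutting argument, using that at each shared vertex the fixed matching $\tilde M$ forces $P$ and $P'$ to agree locally. The hypothesis that $G_1 \yield G_2$ is \emph{immediate} plays its role precisely here: it permits choosing $X$ minimal, which constrains the structure of $\tilde G$ and makes the shortcutting argument tractable.
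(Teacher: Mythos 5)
There is no proof of Lemma~\ref{lem:immediate} in this paper to compare against --- it is cited from Kita's earlier work as a preliminary --- so I will only assess the soundness of your argument as a proof. The setup is sound: restricting $M$ to the separating set $X$ gives a perfect matching of $G[X]$; contracting $V(G_1)$ to $w$ gives a near-perfect matching $\tilde M$ of the factor-critical $\tilde G$ missing only $w$; and if one did have a simple $\tilde M$-alternating circuit through $w$, with $w$ exposed on it, meeting $V(G_2)$, it would indeed lift to the desired $M$-ear relative to $G_1$ (proper or not according to whether the two lifted edges at $w$ attach to distinct vertices of $G_1$).

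The gap is exactly the one you flag, and what you sketch does not close it. The union of the two alternating paths $P$ (from $w$ to $v$) and $P'$ (from $w$ to $v'$) is a closed $\tilde M$-alternating walk at $w$, but it need not contain a simple $\tilde M$-alternating circuit through $w$ that ever reaches $V(G_2)$. Your remark that ``$\tilde M$ forces $P$ and $P'$ to agree locally'' at shared vertices is only half true: at a vertex $z$ interior to both paths they share the single $\tilde M$-edge $zz^*$, but each path carries its own non-$\tilde M$ edge at $z$, and it is precisely these divergences that can make every simple alternating circuit through $w$ inside $E(P)\cup E(P')$ close up before either path enters $G_2$. One can build concrete factor-critical $\tilde G$ with near-perfect $\tilde M$ where, for a particular choice of $N$, $N'$, the only such circuit is a short one disjoint from $\{v,v'\}$. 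You defer to the immediateness hypothesis and a minimal choice of $X$ to rule this out, but you give no argument connecting minimality of $X$ to the overlap structure of $P$ and $P'$; as written the key step is asserted, not proved. To complete the argument you would need either a concrete lemma tying minimality of $X$ to the structure of $\tilde G$ and an explicit shortcutting, or a different construction that does not rely on merging two independently chosen alternating paths --- for instance building one circuit directly from a single perfect matching of $\tilde G$ minus a suitably chosen vertex, or an ear-decomposition argument inside the factor-critical $\tilde G$.
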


\begin{remark}
The results presented in this section are obtained without using any known results 
via a fundamental graph theoretic discussion on matchings. 
\end{remark}

\section{Structure of Towers} \label{sec:tower}
The remainder of this paper introduces the new results. 
In this section, we further develop the theory of canonical decomposition in  Section~\ref{sec:pre:cathedral} 
to acquire lemmas for Section~\ref{sec:newproof}. 
We  define and explore the notions of {\em towers}, {\em arcs},  and {\em tower-sequences},  
and work towards assuring the existence of arcs and tower-sequences with certain maximality, {\em spanning arcs} and {\em spanning tower-sequences}.  
We aim at obtaining Lemmas~\ref{lem:seq2span} and \ref{lem:min2span};  
they will be the main tools in Section~\ref{sec:newproof:casemixed}.

In this section, unless otherwise stated, let $G$ be a factorizable graph and $M$ be a perfect matching. 
The set of minimal elements in the poset $\poset{G}$ is denoted by $\pmin{G}$. 
\begin{definition}
Let $H\in\comp{G}$. 
A {\em tower } over $H$ is the subgraph  $G[\vupstar{H}]$
and is denoted by $\partower{G}{H}$ or simply by $\tower{H}$. 

Given $H_1, H_2 \in\comp{G}$ such that neither $H_1\yield H_2$ nor $H_2\yield H_1$ hold,  
we say $\tower{H_1}$ and $\tower{H_2}$ are {\em tower-adjacent} 
or {\em t-adjacent} if 
$\vup{H_1}\cap \vup{H_2} \neq \emptyset$ 
or $E[\vupstar{H_1}, \vupstar{H_2}] \neq \emptyset$ hold.  
Here, $S_1\in\pargpart{G}{H_1}$ is a {\em port} of this adjacency 
if $\vup{S_1}\cap \vup{H_2} \neq \emptyset$ 
or $E[\vupstar{S_1}, \vupstar{H_2}] \neq \emptyset$ hold. 
\end{definition}

The next lemma will be used for Lemma~\ref{lem:adjacent2arc} as well as for Lemma~\ref{lem:path2ear}. 
\begin{lemma}\label{lem:towerrelative}
Let $G$ be a factorizable graph, 
and $M$ be a perfect matching of $G$. 
For any $H\in\comp{G}$, 
there is no non-trivial $M$-ear relative to $\tower{H}$. 
Any trivial $M$-ear relative to $V(\tower{H})$ is an edge of $\tower{H}$. 
\end{lemma}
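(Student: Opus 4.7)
The second sentence is immediate because $\tower{H}=G[\vupstar{H}]$ is an induced subgraph, so any edge with both ends in $\vupstar{H}$ already belongs to $E(\tower{H})$. For the main claim I plan to argue by contradiction. Assume $P$ is a non-trivial $M$-ear relative to $V(\tower{H})$; then every internal vertex of $P$ lies outside $\vupstar{H}$, hence in a factor-component that is not an element of $\upstar{H}$. In each case I will reduce to a setting where Lemma~\ref{lem:ear2comp} or Lemma~\ref{lem:vear2comp} forces every factor-component traversed by some derived ear into $\upstar{H}$, contradicting the presence of a factor-component outside $\upstar{H}$ hosting an internal vertex of $P$.

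The easy cases are direct. If $P$ is a circuit and $V(P)\cap V(\tower{H})=\{x\}$, let $H_x\in\upstar{H}$ be the factor-component containing $x$. Then $P$ is itself an $M$-ear relative to $\{x\}$, so Lemma~\ref{lem:vear2comp} forces every factor-component meeting $P$ into $\upstar{H_x}\subseteq\upstar{H}$; this contradicts the location of the internal vertices of $P$. If $P$ is a path whose two ends $x,y$ lie in a common factor-component $H_1\in\upstar{H}$, then $P$ is an $M$-ear relative to $V(H_1)$, and the same style of argument using Lemma~\ref{lem:ear2comp} closes the case.

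The main case is a path $P$ with ends $x\in V(H_1)$ and $y\in V(H_2)$ for distinct $H_1,H_2\in\upstar{H}$. My plan is to extend $P$ at both ends down to $V(H)$ and reduce to one of the previous cases. Using the partition $\{\vupstar{S}:S\in\pargpart{G}{H}\}$ of $\vupstar{H}$, write $x\in\vupstar{S_1}$ and $y\in\vupstar{S_2}$. By Lemma~\ref{lem:tpath}(i) there exist $M$-balanced paths $P_x$ from $x$ to some $x''\in S_1$ with internal vertices in $\vup{S_1}$, and $P_y$ from $y$ to some $y''\in S_2$ with internal vertices in $\vup{S_2}$ (either path may be trivial). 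The concatenation $P_x^{-1}+P+P_y$ is an alternating walk from $x''$ to $y''$ carrying non-$M$ edges at both ends; alternation at the junctions $x$ and $y$ holds because the end-edge parities of $M$-balanced and $M$-exposed paths are complementary. All internal vertices of this walk lie outside $V(H)$, and it still contains a vertex of $P$ lying outside $\vupstar{H}$. After extracting from this walk a simple $M$-ear --- a path-type ear relative to $V(H)$ when $x''\neq y''$, or a circuit-type ear relative to $\{x''\}$ when $x''=y''$ --- a further application of Lemma~\ref{lem:ear2comp} or Lemma~\ref{lem:vear2comp} places every factor-component met by the extracted ear into $\upstar{H}$, yet this ear still visits a vertex outside $\vupstar{H}$ by construction, which is the desired contradiction.

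The principal obstacle is the walk-to-simple-ear reduction in the last step: the paths $P_x$ and $P_y$ may share internal vertices in $\vup{H}$, and $x''$ may coincide with $y''$, so the concatenated walk need not be a simple path. The saving feature is that the internal vertices of $P$ lie strictly outside $\vupstar{H}$ while those of $P_x$ and $P_y$ lie strictly inside, so any sub-walk obtained by short-cutting around overlaps retains at least one vertex outside $\vupstar{H}$. The resulting simple ear is therefore non-trivial and traverses a factor-component outside $\upstar{H}$, which is precisely what Lemma~\ref{lem:ear2comp} or Lemma~\ref{lem:vear2comp} contradicts.
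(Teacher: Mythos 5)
Your reduction of the trivial statement to the fact that $\tower{H}$ is an induced subgraph is correct, and your two easy cases (circuit-type ear relative to a single vertex via Lemma~\ref{lem:vear2comp}; path with both ends in one factor-component via Lemma~\ref{lem:ear2comp}) are sound and go through. Your setup for the main case --- extending the two ends of $P$ down into $S_1,S_2\in\pargpart{G}{H}$ by the $M$-balanced paths supplied by Lemma~\ref{lem:tpath}~\ref{item:tpath:up2base}, and checking that the junctions at $x$ and $y$ alternate --- is also exactly the right first move, and it matches the paper.

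The gap is exactly where you flagged it, but I don't think your ``saving feature'' actually saves you. Suppose $S_1=S_2$ and $P_x$, $P_y$ meet at a vertex $w\in\vup{S_1}$. The concatenated walk then runs $x''\to\cdots\to w\to\cdots\to x\to P\to y\to\cdots\to w\to\cdots\to y''$. The standard short-cut at the repeated vertex $w$ keeps the simple sub-walk $x''\to\cdots\to w\to\cdots\to y''$ and deletes the entire loop through $x$, $P$, and $y$; the result lies wholly inside $\vupstar{H}$ and traverses nothing outside $\upstar{H}$, contradicting your claim that any short-cut retains a vertex outside $\vupstar{H}$. The alternative choice --- retaining the loop $w\to\cdots\to x\to P\to y\to\cdots\to w$ as a closed walk --- does keep $P$, but you then owe a proof that this closed walk is a genuine $M$-ear: that it is a circuit (i.e., $P_x$ and $P_y$ meet at only one vertex), that both edges at $w$ are non-$M$ (so that $C-w$ is $M$-saturated as the definition requires), and that $w$ is the only vertex of its factor-component on the circuit. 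None of that is automatic, and none of it is argued.

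The missing idea, which is what the paper's proof supplies, is to pick the truncation points \emph{before} concatenating rather than cleaning up a walk afterward. Tracing $Q_1$ from $x_1$, the paper stops at the first vertex $z_1$ whose factor-component $I$ already meets $Q_2$, and then traces $Q_2$ from $x_2$ to the first vertex $z_2\in V(I)$. This choice forces $V(x_1Q_1z_1)\cap V(x_2Q_2z_2)\subseteq\{z_1\}\cap\{z_2\}$ (so the truncated pieces are disjoint or coincide in the single endpoint), and because a shared repeated vertex would have to coincide with a shared $M$-edge, the edge into each $z_i$ is automatically non-$M$. The concatenation is therefore a simple $M$-ear relative to $V(I)$ with $I\in\upstar{H}$, on which Lemma~\ref{lem:ear2comp} gives the contradiction in one stroke, without ever splitting into cases on $S_1=S_2$ or $x''=y''$. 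If you want to salvage your write-up rather than switch entirely to that device, you must at minimum (a) observe that any shared interior vertex of $P_x$ and $P_y$ lies on a shared $M$-edge (since $M$ is a matching), and (b) from that extract explicitly the simple circuit through $P$ together with the parity check at the pivot vertex --- both of which are precisely what the ``first common factor-component'' trick packages for free.
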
 

\begin{proof}
Let $P$ be a non-trivial $M$-ear relative to $\tower{H}$. 
Let $x_1$ and $x_2$ be the ends of $P$, 
and let $S_1, S_2\in\pargpart{G}{H}$ be such with $x_1\in \vupstar{S_1}$ and $x_2\in \vupstar{S_2}$. 
By Lemma~\ref{lem:tpath} \ref{item:tpath:up2base}, 
there is an $M$-balanced path $Q_i$ from $x_i$ to a vertex $t_i\in S_i$ 
with $V(Q_i)\setminus \{t_i\} \subseteq \vup{S_i}$ for each $i\in\{1,2\}$. 
Trace $Q_1$ from $x_1$, and let $z_1$ be the first encountered vertex that is in a factor-component $I$ with $V(I)\cap V(Q_2)\neq \emptyset$. 
Trace $Q_2$ from $x_2$, and let $z_2$ be the first encountered vertex in $I$. 
Note that $Q_i$ is an $M$-balanced path from $x_i$ to $t_i$ for each $i\in\{1,2\}$. 
Then, $x_1Q_1z_1 + P + x_2Q_2z_2$ is an $M$-ear relative to $I$ and traversing the factor-components that $P$ traverses. This implies by Lemma~\ref{lem:ear2comp} that the factor-components traversed by $P$ are upper bounds of $I\in \upstar{H}$ and accordingly, of $H$, too. This is a contradiction. 
This proves the first statement. 
The remaining statement is obvious. 
\end{proof}

\begin{definition}
Let $H_1, H_2\in \comp{G}$ 
be such with $H_1\neq H_2$.  
An $M$-exposed path $P$ is an $M$-{\em arc} between $H_1$ and $H_2$ 
if the ends of $P$ are in $H_1$ and $H_2$ 
whereas the other vertices are disjoint from $H_1$ and $H_2$. 
\end{definition}

The next two lemmas describe properties on $M$-arcs. 
\begin{lemma}\label{lem:adjacent2arc}
Let $G$ be a factorizable graph, and $M$ be a perfect matching of $G$. 
Let $H_1, H_2\in\comp{G}$ be such that neither $H_1\yield H_2$ nor $H_2\yield H_1$ hold. 
If $\tower{H_1}$ and $\tower{H_2}$ are t-adjacent, with ports $S_1\in\pargpart{G}{H_1}$ and $S_2\in\pargpart{G}{H_2}$,   
then there is an $M$-arc between $H_1$ and $H_2$, whose ends are in $S_1$ and $S_2$ 
whereas the other vertices  are contained in $\vup{S_1}\cup \vup{S_2}$.  
\end{lemma}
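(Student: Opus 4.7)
The plan is to realize the required $M$-arc by splicing two $M$-balanced paths supplied by Lemma~\ref{lem:tpath}(i), one threading through $\vup{S_1}$ down to $S_1$ and the other through $\vup{S_2}$ down to $S_2$, joined at a bridge that witnesses the t-adjacency. The first step is to unpack the port conditions for $S_1,S_2$: either there exists a shared vertex $z\in\vup{S_1}\cap\vup{S_2}$, or there exists an edge $uv\in E_G[\vupstar{S_1},\vupstar{S_2}]$.

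In the first case, $z$ lies in a factor-component $I\in\up{S_1}\cap\up{S_2}$, so the $M$-mate $z'\in V(I)$ of $z$ also belongs to $\vup{S_1}\cap\vup{S_2}$. I would apply Lemma~\ref{lem:tpath}(i) with $S=S_1$ to $z$ to obtain an $M$-balanced path $Q_1$ from $z$ to some $y_1\in S_1$ with non-$y_1$ vertices in $\vup{S_1}$, and with $S=S_2$ to $z'$ to obtain an $M$-balanced path $Q_2$ from $z'$ to some $y_2\in S_2$ with non-$y_2$ vertices in $\vup{S_2}$. Both $Q_1$ and $Q_2$ begin with the unique matching edge $zz'$, so the concatenation $W:=y_1 Q_1 z' z Q_2 y_2$, in which $zz'$ is traversed exactly once as the bridging middle edge, is a walk from $y_1$ to $y_2$ whose edges alternate and begin and end with non-matching edges; hence $W$ is $M$-exposed. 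In the second case, since a common factor-component would reduce to the first, we may assume $u\in\vup{S_1}$ and $v\in\vup{S_2}$ lie in distinct factor-components so $uv\notin M$; the analogous construction $W:=y_1 Q_1 u v Q_2 y_2$ from $M$-balanced paths at $u$ and $v$ yields an $M$-exposed walk.

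The principal obstacle is promoting $W$ from a walk to a simple path, because $V(Q_1)\cap V(Q_2)$ may extend beyond the intended bridge. My plan is to take a pair $(Q_1,Q_2)$ minimizing $|V(Q_1)\cap V(Q_2)|$ with the sum of lengths as tiebreaker; any shared vertex $w$ beyond the bridge forces, by uniqueness of the matching edge at $w$, its $M$-mate $w'$ to be shared as well, and then swapping the tails of $Q_1$ and $Q_2$ at $w$ or at the pair $\{w,w'\}$—choosing whichever aligns the alternation parity—produces a new valid pair with strictly smaller overlap, contradicting minimality. Lemma~\ref{lem:towerrelative} seems naturally relevant here, since any closed subwalk of $W$ lying entirely within one tower would otherwise yield a non-trivial $M$-ear relative to that tower. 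Once simplicity is established, the endpoints lie in $S_1,S_2$ by construction, the interior vertices lie in $\vup{S_1}\cup\vup{S_2}$, and the hypotheses $H_1\not\yield H_2$ and $H_2\not\yield H_1$ give $\vup{S_i}\cap V(H_{3-i})=\emptyset$, so no interior vertex falls in $V(H_1)\cup V(H_2)$; this makes $W$ the desired $M$-arc.
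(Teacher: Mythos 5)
Your overall strategy matches the paper's: obtain two $M$-balanced paths from Lemma~\ref{lem:tpath}\ref{item:tpath:up2base} (one descending through $\vup{S_1}$ to $S_1$, one through $\vup{S_2}$ to $S_2$), and bridge them at an edge witnessing t-adjacency. That much is sound, and the endpoint/interior location bookkeeping at the end is essentially right. However, the step you yourself flag as ``the principal obstacle'' --- promoting the walk $W$ to a simple path --- is a genuine gap, and the minimality-with-tail-swapping plan you sketch does not work as stated: if $w$ is a shared interior vertex of $Q_1$ and $Q_2$, then swapping the subpaths beyond $w$ exchanges the endpoints, producing a path from $z$ (or $z'$) to $y_2\in S_2$ in place of $Q_1$, which is no longer an admissible output of Lemma~\ref{lem:tpath}\ref{item:tpath:up2base} for $S_1$. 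You mention Lemma~\ref{lem:towerrelative} as ``naturally relevant,'' but you never actually deploy it; deploying it is exactly what closes this gap.

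What the paper does instead is choose the bridge so that the two-tower structure gives disjointness for free: take $uv$ with $u\in\vupstar{S_1}$ and $v\in\vupstar{S_2}\setminus\vupstar{S_1}$, observe that in fact $v\notin\vupstar{H_1}$ (otherwise $v\in\vcoup{S_1}$, contradicting Theorem~\ref{thm:cor} together with $H_1\neq H_2$), and then argue by contradiction: if the two balanced paths shared a vertex, tracing $P_2$ from $v$ to its first vertex $r$ in $\vupstar{H_1}$ (which exists because $V(P_1)\subseteq\vupstar{H_1}$) would make $uv+vP_2r$ a non-trivial $M$-ear relative to $\tower{H_1}$ --- impossible by Lemma~\ref{lem:towerrelative}. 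Your Case~1 (a shared interior vertex $z\in\vup{S_1}\cap\vup{S_2}$, bridging on the matching edge $zz'$) cannot be handled this way, because there the bridge lies entirely inside both towers, so the crossing used in the ear argument is absent; this is precisely why the paper avoids that case by insisting the bridge itself already crosses out of $\vupstar{S_1}$. So the fix is: do not split into two cases --- in the t-adjacent configuration one can always pick $uv$ with $v\in\vupstar{S_2}\setminus\vupstar{S_1}$, apply the ear/tower contradiction to obtain disjointness of $P_1$ and $P_2$, and then $P_1+uv+P_2$ is the desired $M$-arc.
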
 

\begin{proof}
Let $uv\in E[\vupstar{S_1}, \vupstar{S_2}\setminus \vupstar{S_1}]$, 
where $u\in \vupstar{S_1}$ and $v\in \vupstar{S_2}\setminus \vupstar{S_1}$. By Lemma~\ref{lem:tpath} \ref{item:tpath:up2base}, 
there is an $M$-balanced path $P_2$ from $v$ to a vertex $w\in S_2$ with $V(P_2)\setminus \{w\} \subseteq \vup{S_2}$. 
Additionally, 
there is an $M$-balanced path $P_1$ from $u$ to a vertex $z\in S_2$ with $V(P_1)\setminus \{z\} \subseteq \vup{S_1}$ 
\begin{pclaim}\label{claim:adjacent2arc:disjoint}
The paths $P_1$ and $P_2$ are disjoint. 
\end{pclaim}
\begin{proof} 
Suppose this claim fails. 
First, note that $v\not\in \vupstar{H_1}$; 
otherwise,  $v\in\coup{S_1}$ holds, and this contradicts Theorem~\ref{thm:cor} or the assumption $H_1\neq H_2$.  
Trace $P_2$ from $v$, and let $r$ be the first encountered vertex in $\vupstar{H_1}$. 
Then,  $uv + vP_2r$ is a non-trivial $M$-ear relative to $\vupstar{H_1}$, 
which contradicts Lemma~\ref{lem:towerrelative}. 
\end{proof} 
By Claim~\ref{claim:adjacent2arc:disjoint}, 
$P_1 + uv + P_2$ is a desired $M$-arc. 
\end{proof}

\begin{lemma}\label{lem:arcdisjoint}
Let $G$ be a factorizable graph, and $M$ be a perfect matching of $G$. 
Let $H_1, H_2\in\comp{G}$ be such that neither $H_1\yield H_2$  nor $H_2\yield H_1$ hold. 
Let $P$ be an $M$-arc between $H_1$ and $H_2$, 
whose ends are $u_1\in S_1$ and $u_2\in S_2$, where $S_1 \in \pargpart{G}{H_1}$ and $S_2\in \pargpart{G}{H_2}$. 
Then, $P$ is disjoint from $\vcoup{S_1}\cup\vcoup{S_2}$. 
\end{lemma}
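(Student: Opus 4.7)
The plan is to prove $V(P)\cap\vcoup{S_1}=\emptyset$; disjointness from $\vcoup{S_2}$ will then follow by the symmetric argument (swapping the roles of $H_1, S_1, u_1$ with $H_2, S_2, u_2$ and traversing $P$ from the opposite end). The first step is the structural observation that $V(P)\cap\vupstar{H_1}$ is a single contiguous subpath $u_1 P a$ of $P$ beginning at $u_1$. Indeed, $\vupstar{H_1}$ is a separating set, so every edge of $G$ leaving it misses $M$; therefore any subpath of $P$ whose two endpoints lie in $\vupstar{H_1}$ while its interior contains at least one vertex outside $\vupstar{H_1}$ would have both end-edges in $\delta(\vupstar{H_1})\setminus M$ and, combined with the alternation of $P$, would constitute a non-trivial $M$-ear relative to $\tower{H_1}$, contradicting Lemma~\ref{lem:towerrelative}.

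Assume for contradiction that $V(u_1 P a)\cap\vcoup{S_1}\neq\emptyset$, and let $v$ be the first such vertex encountered from $u_1$. Since $u_1\in S_1\subseteq\vupstar{S_1}$, certainly $v\neq u_1$. Moreover $v\notin V(H_1)$: the interior of $P$ is disjoint from $V(H_1)$, while the only other endpoint $u_2\in V(H_2)$ is disjoint from $V(H_1)$. Hence $v$ lies in some connected component $K$ of $G[\vparup{G}{H_1}]$ whose tag $T\in\pargpart{G}{H_1}$ is distinct from $S_1$, and Theorem~\ref{thm:cor} yields $\parNei{G}{K}\cap V(H_1)\subseteq T$. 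Let $w$ be the predecessor of $v$ along $P$; the interval property gives $w\in V(u_1 P a)\subseteq\vupstar{H_1}$, and the minimality of $v$ gives $w\in\vupstar{S_1}=S_1\cup\vup{S_1}$.

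The argument concludes via a case split on $w$. If $w\in S_1$, then $w\in V(H_1)$, and since the interior of $P$ avoids $V(H_1)$ we must have $w=u_1$; the edge $u_1 v$ then forces $u_1\in\parNei{G}{K}\cap V(H_1)\subseteq T$, contradicting $u_1\in S_1$ together with $S_1\cap T=\emptyset$. Otherwise $w\in\vup{S_1}$ lies in a factor-component $J\in\parup{G}{S_1}$; the edge $wv$, which lies inside $G[\vparup{G}{H_1}]$, then places $J$ and the factor-component of $v$ in the same connected component of $G[\vparup{G}{H_1}]$, but this component would then simultaneously carry the tag $S_1$ and the distinct tag $T$, which is impossible. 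I expect the first structural step --- confirming via Lemma~\ref{lem:towerrelative} that $P$ cannot re-enter $\tower{H_1}$ once it has left --- to be the subtle part, after which the case analysis on $w$ is a direct application of Theorem~\ref{thm:cor}.
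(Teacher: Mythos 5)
Your proof is correct, and it takes a genuinely different route from the paper's. The paper reasons as follows: let $x$ be the first vertex of $P$ in $\vcoup{S_1}$; apply Lemma~\ref{lem:tpath}~\ref{item:tpath:base2coup} to obtain an $M$-saturated path $Q$ between $u_1$ and $x$ inside the separating set $\vcoup{S_1}\cup S_1$; splice $Q$ with the initial segment $u_1Px$ to obtain an $M$-alternating circuit that uses a non-allowed edge crossing that separating set, contradicting Lemma~\ref{lem:circ2matching}. Your argument never constructs an alternating circuit at all. After using Lemma~\ref{lem:towerrelative} to establish that $V(P)\cap\vupstar{H_1}$ is an initial interval of $P$, you derive the contradiction purely from the unambiguity of the tag assigned to each connected component of $G[\vparup{G}{H_1}]$ by Theorem~\ref{thm:cor}: the predecessor $w$ of the first vertex $v$ of $P$ in $\vcoup{S_1}$ forces either $u_1\in S_1$ to lie in a different class $T$, or a component $K$ of $G[\vparup{G}{H_1}]$ to carry two distinct tags. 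So your proof trades the path-machinery of Lemma~\ref{lem:tpath} and the circuit argument of Lemma~\ref{lem:circ2matching} for Lemma~\ref{lem:towerrelative} together with a direct reading of the definition of $\vparup{G}{S}$; it is arguably more elementary in relying only on the definition of the canonical partition. One small imprecision worth tightening: in the interval step you describe the forbidden subpath as having ``at least one vertex outside $\vupstar{H_1}$'' in its interior, but for the $M$-ear definition the interior must lie \emph{entirely} outside $\vupstar{H_1}$; this is remedied by choosing the subpath $aPc$ from the last vertex $a$ of the initial interval to the first re-entry $c$, which your argument tacitly does.
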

\begin{proof}
Suppose the statement fails, 
and let $x$ be the first vertex in, e.g., $\vcoup{S_1}$ that is encountered when we trace $P$ from $u_1$; $P$ is an $M$-ear relative to $\vcoup{S_1}\cup S_1$. 

By Lemma~\ref{lem:tpath} \ref{item:tpath:base2coup}, 
there is an $M$-saturated path $Q$ between $u_1$ and $x$ with $V(Q)\subseteq \vcoup{S_1}\cup S_1$. 
Then, $P + Q$ is an $M$-saturated path that contains non-allowed edges in $\delta(\vcoup{S_1}\cup S_1)$. By Lemma~\ref{lem:circ2matching}, 
this is a contradiction. 
Hence $P$ is disjoint from $\vcoup{S_1}$, and also, by symmetry, from $\vcoup{S_2}$. 
\end{proof}

\begin{definition}
Let $H_1, \ldots, H_k \in \comp{G}$, where $k\ge 1$.  
For each $i\in \{1\ldots, k\}$, let $S_i^+, S_i^- \in \pargpart{G}{H_i}$ be such with $S_i^+ \neq S_i^-$.  
We say $H_1, \ldots, H_k$ is a {\em tower-sequence}, {\em from $H_1$ to $H_k$},  
if $k=1$ holds  
or if $k>1$ holds and for each $i \in \{ 1, \ldots, k-1 \}$, 
$\tower{H_i}$ and $\tower{H_{i+1}}$ are t-adjacent with ports $S_i^+$ and $S_{i+1}^-$. 
\end{definition} 

The next lemma states that no repetition occurs in a tower-sequence.   
\begin{lemma}\label{lem:minseq2arc}
Let $G$ be a factorizable graph, 
and $M$ be a perfect matching of $G$. 
Let $H_1,\ldots, H_k \in\pmin{G}$, where $k > 1$, be a tower-sequence with ports $S_i^+, S_i^-\in\pargpart{G}{H_i}$ for $i\in\{1,\ldots, k\}$. 
Then, 
\begin{romanenumerate}
\item \label{item:minseq2arc:distinct}
$H_i\neq H_j$ holds for any $i,j\in \{1,\ldots, k\}$ with $i\neq j$, 
and 
\item \label{item:minseq2arc:arc} 
there is an $M$-arc between $H_1$ and $H_k$ 
whose ends are in $S_1^+$ and $S_k^-$ 
and which,  if $k\ge 3$ holds,  traverses each $H_2, \ldots, H_{k-1}$.  
\end{romanenumerate}  
\end{lemma}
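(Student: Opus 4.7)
The plan is to prove (i) and (ii) simultaneously by induction on $k$. The base case $k=2$ is immediate: Lemma~\ref{lem:adjacent2arc} applied to the t-adjacency of $\tower{H_1}$ and $\tower{H_2}$ with ports $S_1^+$ and $S_2^-$ yields the required $M$-arc, and $H_1\neq H_2$ follows from the incomparability built into the definition of t-adjacency.

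For the inductive step ($k\geq 3$) I would first dispose of most of (i) by applying the induction hypothesis to the two overlapping subsequences $H_1,\ldots,H_{k-1}$ and $H_2,\ldots,H_k$, each of which is itself a tower-sequence of minimal elements of length $k-1$. Together these give $H_i\neq H_j$ for every pair other than possibly $(H_1,H_k)$. The construction for (ii) then combines three ingredients: the inductive (ii) on $H_1,\ldots,H_{k-1}$ produces an $M$-arc $P$ from some $u_1\in S_1^+$ to some $x_{k-1}\in S_{k-1}^-$ traversing $H_2,\ldots,H_{k-2}$; Lemma~\ref{lem:adjacent2arc} applied to $\tower{H_{k-1}}$ and $\tower{H_k}$ gives an $M$-arc $Q$ from $y_{k-1}\in S_{k-1}^+$ to $u_k\in S_k^-$ with interior inside $\vup{S_{k-1}^+}\cup\vup{S_k^-}$; and Lemma~\ref{lem:tpath}(ii) supplies an $M$-saturated path $R$ from $x_{k-1}$ to $y_{k-1}$ with vertices in $\vupstar{H_{k-1}}\setminus\vup{S_{k-1}^-}\setminus\vup{S_{k-1}^+}$. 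The alternation at the junctions $x_{k-1}$ (non-$M$ meets $M$) and $y_{k-1}$ ($M$ meets non-$M$) makes $W:=P+R+Q$ an $M$-alternating walk from $u_1$ to $u_k$ beginning and ending with non-$M$ edges. Pairwise disjointness of $P,R,Q$ apart from $x_{k-1}$ and $y_{k-1}$ is then verified via Lemma~\ref{lem:arcdisjoint}, which forces $P$'s vertices in $\vup{H_{k-1}}$ into $\vup{S_{k-1}^-}$ and $Q$'s into $\vup{S_{k-1}^+}$, disjoint from $R$ and from each other via the partition $\{\vupstar{S}:S\in\pargpart{G}{H_{k-1}}\}$ of $\vupstar{H_{k-1}}$; this promotes $W$ to a simple $M$-exposed path realising (ii).

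To rule out the remaining coincidence $H_1=H_k$ required to finish (i), observe that the walk $W$ would then have both endpoints in $V(H_1)$; reducing $W$ to a simple $M$-exposed sub-path (or to an $M$-alternating circuit if $u_1=u_k$) yields an $M$-ear relative to $V(H_1)$ that still touches $V(H_j)$ for some $j\in\{2,\ldots,k-1\}$ traversed by $P$. Lemma~\ref{lem:ear2comp} then forces $H_1\yield H_j$ strictly, contradicting the minimality of $H_j\in\pmin{G}$.

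The main obstacle I anticipate lies in verifying the simplicity of $W$ \emph{outside} $\vupstar{H_{k-1}}$: Lemma~\ref{lem:arcdisjoint} alone does not forbid $P$ from entering $\vup{S_k^-}$ (where $Q$ passes) or $Q$ from entering $\vup{S_1^+}$ (where $P$ passes). I expect that the inductive statement will need to be strengthened so that each arc's internal vertices are confined to a specific union of towers over the terms of its subsequence; any residual collision would then itself be converted into an $M$-ear that contradicts the minimality of some $H_i$, closing both the disjointness argument and the $H_1=H_k$ case by the same mechanism.
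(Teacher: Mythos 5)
Your inductive skeleton is exactly the one the paper uses: prove the $k=2$ case from Lemma~\ref{lem:adjacent2arc}, settle all coincidences except $(H_1,H_k)$ by applying the induction hypothesis to the two overlapping subsequences of length $k-1$, splice the inductively-obtained arc to a fresh $M$-arc between $H_{k-1}$ and $H_k$ through an $M$-saturated path inside $\tower{H_{k-1}}$ supplied by Lemma~\ref{lem:tpath}~\ref{item:tpath:base2base}, and finally dispose of $H_1=H_k$ by exhibiting an $M$-ear relative to $H_1$ traversing some other minimal element and invoking Lemma~\ref{lem:ear2comp}. You have also correctly put your finger on the genuine issue: nothing in Lemma~\ref{lem:arcdisjoint} nor in the confinement $V(R)\subseteq\vparupstar{G}{H_{k-1}}\setminus\vparup{G}{S_{k-1}^-}\setminus\vparup{G}{S_{k-1}^+}$ prevents the old arc $P$ and the new arc $Q$ from colliding outside $\tower{H_{k-1}}$, so simplicity of $W=P+R+Q$ is not free.

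Where you stop short, though, is the fix. You propose strengthening the inductive invariant to confine each arc's interior, and you are right that such a strengthening would close the gap; but the paper does not strengthen anything, and neither do you need to. The paper proves the disjointness of the inductive arc $\hat P$ from the new piece $\hat Q := P+Q$ (in your labelling, $R+Q$) directly by the very exchange argument you flagged at the end as a fallback: trace $\hat Q$ from $\hat t\in S_{k-1}^-$, let $x$ be the first vertex also on $\hat P$, and split by the parity of $|E(x\hat P\hat t)|$ --- in the even case you obtain an $M$-alternating circuit carrying non-allowed edges of $\delta(H_{k-1})$, contradicting Lemma~\ref{lem:circ2matching}; in the odd case you obtain an $M$-ear relative to $x$ traversing $H_{k-1}$, and by Lemma~\ref{lem:vear2comp} the component containing $x$ then lies strictly below $H_{k-1}$ in $\poset{G}$, contradicting $H_{k-1}\in\pmin{G}$. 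That shows $\hat P$ meets $R+Q$ at $\hat t$ only, with no strengthened invariant required. You should replace the speculative invariant-strengthening paragraph with this concrete exchange argument; it also removes the awkwardness in your $H_1=H_k$ discussion, where ``reducing $W$ to a simple sub-path'' is not guaranteed to retain a pass through $V(H_j)$ --- once simplicity is established first, no reduction is needed and the $M$-ear plainly traverses $H_2,\ldots,H_{k-1}$.

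One residual point, present in your write-up and also glossed over in the paper: you still need $R$ and $Q$ to be internally disjoint so that $R+Q$ is a path. Your partition observation handles $R$ versus the part of $Q$ inside $\vparup{G}{S_{k-1}^+}$, but $R$'s vertex set $\vparupstar{G}{H_{k-1}}\setminus\vparup{G}{S_{k-1}^-}\setminus\vparup{G}{S_{k-1}^+}$ can in principle intersect $\vparup{G}{S_k^-}$. If you want to be fully rigorous here, the cleanest patch is again an exchange argument of the same flavour (a crossing of $R$ and $Q$ yields either an alternating circuit with non-allowed edges in $\delta(H_{k-1})$ or an $M$-ear contradicting minimality of $H_{k-1}$), rather than chasing towers.
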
 

\begin{proof}
We proceed by induction on $k$. 
If $k=2$, 
then \ref{item:minseq2arc:distinct} and \ref{item:minseq2arc:arc} hold by the definition of tower-sequences and by Lemma~\ref{lem:adjacent2arc}. 
Let $k > 2$, and suppose \ref{item:minseq2arc:distinct} and \ref{item:minseq2arc:arc} hold for $1,\ldots, k-1$.  
By applying induction hypothesis to the substructures $H_1,\ldots, H_{k-1}$ and $H_2,\ldots, H_k$,  we obtain that $H_i\neq H_j$ holds for any $i,j\in\{1,\ldots, k\}$ 
with $i\neq j$ and $\{i,j\}\neq\{1,k\}$.  
Consider the subsequence $H_1,\ldots, H_{k-1}$.
There is an $M$-arc $\hat{P}$ between $H_1$ and $H_{k-1}$ that satisfies~\ref{item:minseq2arc:arc}. Let $\hat{s}\in S_1^+$ and $\hat{t}\in S_{k-1}^-$ be the ends of $\hat{P}$. 
By Lemma~\ref{lem:adjacent2arc}, 
there is an $M$-arc $P$ between $H_{k-1}$ and $H_k$, whose ends are $s\in S_{k-1}^+$ and $t\in S_{k}^-$, such that its vertices except for $s$ and $t$ are in $\vup{S_{k-1}^+}\cup\vup{S_k^-}$. 
By Lemma~\ref{lem:tpath} \ref{item:tpath:base2base}, 
there is an $M$-saturated path $Q$ between $\hat{t}$ and $s$ with $V(Q)\subseteq \vupstar{H_{k-1}}\setminus \vup{S_{k-1}^-} \setminus \vup{S_{k-1}^+}$. 
Let $\hat{Q} := P + Q$; 
then, $\hat{Q}$ is an $M$-balanced path from $\hat{t}$ to $t$ that traverses $H_{k-1}$. 
\begin{pclaim}\label{claim:minseq2arc:share1vertex}
The paths $\hat{P}$ and $\hat{Q}$ have only $\hat{t}$ as a common vertex. 
\end{pclaim} 
\begin{proof}
Suppose this claim fails, 
and let $x$ be the first vertex in $\hat{P}$ that is encountered if we trace $\hat{Q}$ from $\hat{t}$. 
If $x\hat{P}\hat{t}$ has an even number of edges, then $x\hat{P}\hat{t} + \hat{t}\hat{Q}x$ is an $M$-alternating circuit that contains non-allowed edges in $\delta(H_{k-1})$. This is a contradiction by Lemma~\ref{lem:circ2matching}. 
Otherwise, if  $x\hat{P}\hat{t}$ has an odd number of edges, then $x\hat{P}\hat{t} + \hat{t}\hat{Q}x$ is an $M$-ear relative to $x$ and traversing $H_{k-1}$. 
This implies by Lemma~\ref{lem:ear2comp} that $H_{k-1}$ has a lower-bound in $\poset{G}$ that is distinct from itself.  This is again a contradiction. 
\end{proof} 
By Claim~\ref{claim:minseq2arc:share1vertex},  
$\hat{P}+\hat{Q}$ is an $M$-exposed path that traverses  $H_2,\ldots, H_{k-1}$. 
If $H_1 = H_k$ holds, then $\hat{P}+\hat{Q}$ is an $M$-ear relative to $H_1$. This is a contradiction by Lemma~\ref{lem:ear2comp}, because $H_2, \ldots, H_{k-1}\in\pmin{G}$.  
Hence, we obtain $H_1\neq H_k$, and so $H_1,\ldots, H_k$ are all mutually distinct. Accordingly, 
$\hat{P}+\hat{Q}$ is an $M$-arc satisfying the statement.  
\end{proof}

\begin{definition}
A factor-component $H\in\pmin{G}$ is a {\em border} of $G$ 
if $\tower{H}$ is t-adjacent with no other tower or if  
exactly one member $S$ from $\pargpart{G}{H}$ 
can be a port by which  $\tower{H}$ is t-adjacent with other towers, 
i.e., $E[\vupstar{S}, V(G)\setminus \vupstar{H}]\neq \emptyset$ 
and $E[\vupstar{T}, V(G)\setminus \vupstar{H}] = \emptyset$ for any $T\in\pargpart{G}{H}\setminus\{S\}$ hold.  
Here, $S$ is the {\em port} of the border $H$. 
We denote the set of borders of $G$ by $\border{G}$. 

\end{definition}

\begin{definition}
We say a tower-sequence $H_1,\ldots, H_k\in\pmin{G}$,  where $k\ge 1$,  is {\em spanning} if $H_1$ and $H_k$ are borders of $G$.  
An $M$-arc between $H\in\comp{G}$ and $I\in\comp{G}$ is {\em spanning} if 
$H$ and $I$ are borders of $G$. 
\end{definition} 

Finally, we can derive the lemmas on spanning tower-sequences and spanning $M$-arcs. 
From Lemma~\ref{lem:minseq2arc}, the next lemma is obtained rather easily. 
\begin{lemma}\label{lem:seq2span} 
Let $G$ be a  factorizable graph. 
For a tower-sequence $H_1, \ldots, H_k\in\pmin{G}$, 
there is a spanning tower-sequence $I_1,\ldots, I_l\in\pmin{G}$ with $l \ge k$ and $I_i = H_1, \ldots, I_{i+k}= H_k$ for some $i\in\{1,\ldots, k-l\}$.  
\end{lemma}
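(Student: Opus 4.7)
My plan is to construct the desired spanning tower-sequence by iteratively prepending and appending elements to $H_1,\ldots,H_k$, one element of $\pmin{G}$ at a time, until both endpoints become borders of $G$. The finiteness of $\pmin{G}$, together with Lemma~\ref{lem:minseq2arc}~\ref{item:minseq2arc:distinct}, will force the process to terminate, and the resulting sequence will automatically contain $H_1,\ldots,H_k$ as a contiguous sub-list.

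For the extension step on the left, suppose the current leftmost element $H$, with outgoing port $S^+\in\pargpart{G}{H}$ towards its right neighbour, is not a border. The border definition then guarantees at least two classes $S',S''\in\pargpart{G}{H}$ with $E[\vupstar{S'},V(G)\setminus\vupstar{H}]\neq\emptyset$ and $E[\vupstar{S''},V(G)\setminus\vupstar{H}]\neq\emptyset$, so at least one of them, call it $S$, differs from $S^+$. Pick an edge $uv$ with $u\in\vupstar{S}$ and $v\notin\vupstar{H}$, let $H'\in\comp{G}$ be the factor-component containing $v$, and let $H_0\in\pmin{G}$ be any minimal element satisfying $H_0\yield H'$ (allowing $H_0=H'$). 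Then $v\in\vupstar{H_0}$, so $\tower{H_0}$ and $\tower{H}$ are t-adjacent with port $S$ on $H$'s side; moreover $H_0\neq H$, for otherwise $H\yield H'$ would give $v\in\vupstar{H}$, a contradiction. By Theorem~\ref{thm:cor}, the port on $H_0$'s side is a well-defined class $S_0\in\pargpart{G}{H_0}$, which we take as the outgoing port of $H_0$; we relabel the previously arbitrary $S^-$ of $H$ as $S$, completing the prepending. The extension on the right is entirely symmetric.

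Termination is the clean part. At every stage the extended sequence is a tower-sequence in $\pmin{G}$, so by Lemma~\ref{lem:minseq2arc}~\ref{item:minseq2arc:distinct} all of its elements are pairwise distinct. Its length is therefore bounded above by the finite number $|\pmin{G}|$, so only finitely many extensions can be performed, and the procedure must halt. But the halting condition is exactly that neither endpoint admits a further extension, i.e., both endpoints are borders. The resulting sequence is a spanning tower-sequence of length $l\ge k$ containing $H_1,\ldots,H_k$ contiguously, as desired.

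The main obstacle is the bookkeeping at the newly added endpoint, namely identifying the correct port $S_0$ on $H_0$'s side (handled by Theorem~\ref{thm:cor}) and verifying $\yield$-incomparability of $H_0$ with $H$ (handled by the minimality of $H$). A minor subtlety is the technical requirement $S_0^+\neq S_0^-$ in the definition of a tower-sequence at an added endpoint: this is automatic whenever $|\pargpart{G}{H_0}|\ge 2$, and when $|\pargpart{G}{H_0}|=1$ the element $H_0$ has at most one possible port and is therefore already a border, so no further extension would be attempted past $H_0$ anyway.
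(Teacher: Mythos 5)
Your proof is correct and follows essentially the same strategy as the paper's: establish a one-step extension claim (a non-spanning tower-sequence can be lengthened by one element at a non-border endpoint), and then appeal to finiteness of $\pmin{G}$ together with Lemma~\ref{lem:minseq2arc}~\ref{item:minseq2arc:distinct} for termination. Your write-up is somewhat more explicit than the paper's about how the new endpoint $H_0$ is produced and why it is t-adjacent with the correct port (the paper simply asserts the existence of a suitable $I\in\pmin{G}$), which is a welcome addition. One very minor inaccuracy: your fallback for $|\pargpart{G}{H_0}|=1$ is stated as ``no further extension would be attempted past $H_0$,'' but the tower-sequence definition requires $S_0^+\neq S_0^-$ for every element, including the endpoints, so $H_0$ could not even be prepended in that case; fortunately the situation never arises, since any factor-component $H$ contains an $M$-edge $ab$ with $G-a-b$ factorizable, forcing $|\pargpart{G}{H}|\ge 2$.
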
 

\begin{proof} 
We first prove the following claim: 
\begin{pclaim}\label{claim:seq2span:add} 
If $H_1, \ldots, H_k\in\pmin{G}$ is not spanning, 
there exists a tower-sequence $H_1,\ldots, H_k, H_{k+1}\in\pmin{G}$ or $H_0, H_1,\ldots, H_k\in\pmin{G}$. 
\end{pclaim} 
\begin{proof} 
Because it is not a spanning tower-sequence, 
either $H_1$ or $H_k$ is a non-border; say, let $H_k\not\in\border{G}$. 
Let $S_i^+, S_i^-\in\pargpart{G}{H_i}$ be the ports of $H_1,\ldots, H_k$ 
for $i\in\{1,\ldots, k\}$. 
There exists $T\in\pargpart{G}{H_i}$ with $T\neq S_k^-$ such that 
$\tower{H_k}$ is t-adjacent with another tower over $I\in\pmin{G}$ with $T$ being a port. 
Then, $H_1,\ldots, H_k, H_{k+1}$, where $H_{k+1} = I$, is a tower-sequence. 
\end{proof} 
According to  Claim~\ref{claim:seq2span:add}, 
given a non-spanning tower-sequence, we can repeat extending it  by  adding an element. 
By Lemma~\ref{lem:minseq2arc} \ref{item:minseq2arc:distinct}, 
this repetition ends at some point, 
and a spanning tower-sequence is obtained. 
\end{proof}

The next lemma follows as an easy consequence of Lemmas~\ref{lem:seq2span} and \ref{lem:minseq2arc}. 
\begin{lemma}\label{lem:min2span} 
Let $G$ be a factorizable graph and $M$ be a perfect matching of $G$,  and let $H\in\pmin{G}$. 
\begin{romanenumerate} 
\item \label{item:min2span:seq} 
There exists a spanning tower-sequence $H_1, \ldots, H_k\in\pmin{G}$ with $H = H_i$ for some $i\in\{1,\ldots, k\}$.
\item \label{item:min2span:arc} 
There exists a spanning $M$-arc that has common vertices with $H$.  
\end{romanenumerate} 
\end{lemma}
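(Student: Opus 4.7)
The plan is to derive both parts of Lemma~\ref{lem:min2span} as short consequences of Lemmas~\ref{lem:seq2span} and~\ref{lem:minseq2arc}, so the work is essentially one of assembly rather than fresh argument.

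For part~\ref{item:min2span:seq}, I would first observe that the single element $H$, taken on its own, vacuously satisfies the definition of a tower-sequence in $\pmin{G}$ (the $k=1$ case requires nothing beyond $H\in\pmin{G}$). I would then feed this trivial one-element tower-sequence into Lemma~\ref{lem:seq2span} to obtain a spanning tower-sequence $H_1,\ldots, H_k\in\pmin{G}$ of length $\ell\ge 1$ that contains $H$ as some $H_i$. That immediately gives~\ref{item:min2span:seq}.

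For part~\ref{item:min2span:arc}, I would take the spanning tower-sequence $H_1,\ldots,H_k$ furnished by~\ref{item:min2span:seq} and apply Lemma~\ref{lem:minseq2arc}\,\ref{item:minseq2arc:arc} to obtain an $M$-arc $P$ between $H_1$ and $H_k$ whose ends lie in the designated ports of $H_1$ and $H_k$ and which, when $k\ge 3$, traverses each of $H_2,\ldots,H_{k-1}$. Because the sequence is spanning, $H_1$ and $H_k$ are borders of $G$, so $P$ is a spanning $M$-arc by definition. To check that $P$ shares a vertex with $H$, I would split on where $H$ sits in the sequence: if $H=H_1$ or $H=H_k$, then an end of $P$ is a vertex of $H$; if $1<i<k$, then $P$ traverses $H_i=H$ and therefore contains an interior vertex of $H$. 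Either way $V(P)\cap V(H)\neq\emptyset$, as required.

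The only delicate point will be the degenerate case $k=1$, in which $H$ is itself the unique border and no other tower is t-adjacent to $\tower{H}$; then Lemma~\ref{lem:minseq2arc} does not apply because there is no second component to anchor an $M$-arc. In the intended usage in Section~\ref{sec:newproof:casemixed} this situation does not arise (the ambient structure supplies at least two borders), so I expect to handle it by a single-sentence remark noting that~\ref{item:min2span:arc} is invoked only when $\pmin{G}$ is not exhausted by a single t-isolated border. Apart from this bookkeeping, I do not foresee any real obstacle: all the structural work has already been carried out in Lemmas~\ref{lem:seq2span} and~\ref{lem:minseq2arc}, and the present lemma just specializes them by letting $H$ itself serve as the seed of the extension.
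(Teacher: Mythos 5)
Your proposal is correct and takes essentially the same route as the paper: seed Lemma~\ref{lem:seq2span} with the singleton tower-sequence $H$ to obtain~\ref{item:min2span:seq}, then feed the resulting spanning tower-sequence into Lemma~\ref{lem:minseq2arc}~\ref{item:minseq2arc:arc} for~\ref{item:min2span:arc}. Your observation about the degenerate case $k=1$ (a t-isolated border, where no $M$-arc exists and~\ref{item:min2span:arc} cannot be extracted from Lemma~\ref{lem:minseq2arc}) is a genuine subtlety that the paper's three-line proof passes over in silence; the paper's own usage in Section~\ref{sec:newproof:casemixed} always supplies at least two minimal elements, so your proposed one-sentence caveat is the right way to close it.
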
 

\begin{proof} 
Consider the tower-sequence that consists solely of  $H$. 
By Lemma~\ref{lem:seq2span}, Statement \ref{item:min2span:seq} is obtained. 
Moreover, by Lemma~\ref{lem:minseq2arc} \ref{item:minseq2arc:arc}, 
Statement \ref{item:min2span:arc} is obtained. 
\end{proof}

\begin{remark}
From Lemma~\ref{lem:min2span}, the following is implied: 
if the poset $\poset{G}$ of a factorizable graph $G$ has more than one minimal element, 
then $G$ has at least two distinct borders. 
\end{remark}

The next lemma is about the nature of borders, 
but can be derived without other results in this section. 
This lemma will also be used in Section~\ref{sec:newproof:casemixed}. 
\begin{lemma}\label{lem:deg1path}
Let $G$ be a factorizable graph. 
Let $H\in\border{G}$, 
and let $S$ be the port of $H$.  
Then, the set of vertices that can be reached from $S$ by an $M$-saturated path is $\vcoup{S}$. 
\end{lemma}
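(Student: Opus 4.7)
Proof plan. The inclusion $\vcoup{S} \subseteq R$, where $R$ denotes the set of vertices reached from $S$ by an $M$-saturated path, is immediate: since $\{\vupstar{T} : T \in \pargpart{G}{H}\}$ partitions $\vupstar{H}$, we have $\vcoup{S} = \bigcup_{T \neq S} \vupstar{T}$, and Lemma~\ref{lem:tpath}~\ref{item:tpath:up2up} applied to any $s \in S \subseteq \vupstar{S}$ and $y \in \vupstar{T}$ with $T \neq S$ supplies the required $M$-saturated path. All the content therefore lies in the converse inclusion $R \subseteq \vcoup{S}$, which I will prove by a minimum-length counterexample argument.

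Assume, for contradiction, that some $v \notin \vcoup{S}$ is reached from $S$. Among all triples $(s, v, P)$ consisting of $s \in S$, $v \notin \vcoup{S}$, and an $M$-saturated path $P$ between $s$ and $v$, pick one minimizing $|E(P)|$. Using $V(G) \setminus \vcoup{S} = S \cup \vup{S} \cup (V(G) \setminus \vupstar{H})$, the endpoint $v$ lies in exactly one of three disjoint regions. If $v \in S$, then $v \neq s$ (a single vertex is not an $M$-saturated path), so $v \gsim{G} s$ and Lemma~\ref{lem:delete2path} contradicts the existence of $P$.

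Suppose instead $v \in \vup{S}$. Let $I \in \up{S}$ be the factor-component containing $v$ and let $K$ be the connected component of $G[\vup{H}]$ containing $V(I)$. Theorem~\ref{thm:cor} together with the definition of $\up{S}$ gives $\parNei{G}{K} \cap V(H) \subseteq S$; moreover $K$ being a connected component of $G[\vup{H}]$ forces every other neighbor of $V(K)$ to lie in $V(G) \setminus \vupstar{H}$, so $\parNei{G}{K} \subseteq S \cup (V(G) \setminus \vupstar{H})$. Now let $w'$ be the final vertex at which $P$ enters $V(K)$ (i.e., $P$ stays inside $V(K)$ from $w'$ until $v$) and let $w$ be its predecessor on $P$; then $w \notin V(K)$, hence $w \in S \cup (V(G) \setminus \vupstar{H})$. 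Because $V(K)$ is a union of vertex sets of factor-components and hence a separating set, the crossing edge $ww'$ is not in $M$; alternation along $P$ then forces $sPw$ to end with a matching edge at $w$ and therefore to be $M$-saturated, while the first-edge alternation of $P$ rules out $w = s$. If $w \in S$, then $w \gsim{G} s$ contradicts $sPw$ via Lemma~\ref{lem:delete2path}; otherwise $w \in V(G) \setminus \vupstar{H} \subseteq V(G) \setminus \vcoup{S}$, and $(s, w, sPw)$ is a strictly shorter counterexample, contradicting minimality.

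The remaining case $v \in V(G) \setminus \vupstar{H}$ is handled by applying the same trick to the separating set $\vupstar{H}$. Let $x$ be the last vertex of the maximal initial segment of $P$ contained in $\vupstar{H}$; the exit edge at $x$ is not in $M$, so $sPx$ is an $M$-saturated path and $x \neq s$. The border property of $H$ forces $x \in \vupstar{S}$, so $x \in S$ yields the $\gsim{G}$-contradiction and $x \in \vup{S}$ makes $(s, x, sPx)$ a strictly shorter counterexample. The main technical obstacle is the alternation/separating-set bookkeeping needed to extract an $M$-saturated subpath at each crossing; once a single global minimum-length counterexample is fixed, both reductions share the same progress measure and the argument closes.
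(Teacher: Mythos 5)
Your proof is correct and rests on the same structural facts as the paper's: the forward inclusion follows from Lemma~\ref{lem:tpath}, and for the converse the border's port property together with the separating-set structure forces any $M$-saturated path from $S$ that leaves $\vcoup{S}\cup S$ to exit through $S$ along a non-matching edge, producing an $M$-saturated path with both ends in $S$ and hence a contradiction via Lemma~\ref{lem:delete2path}. The paper organizes the converse as a single forward trace to the first vertex outside $\vcoup{S}\cup S$ (whose predecessor it shows lies in $S$), whereas you use a minimum-length counterexample with a three-way case split on where the endpoint sits; these are the same underlying argument, yours simply spelling out the separating-set and port bookkeeping the paper leaves implicit.
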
 

\begin{proof}
According to Lemma~\ref{lem:tpath} \ref{item:tpath:base2coup}, 
there is an $M$-saturated path between each vertex in $S$ and each vertex in $\vcoup{S}$. Hence, it suffices to show that there is no $M$-saturated path between any vertex in $S$ and any vertex in $V(G)\setminus\vcoup{S}$. 
Suppose that for vertices $x\in S$ and  $z\in V(G)\setminus \vcoup{S}$, there is an $M$-saturated path $Q$ between $x$ and $z$. By Lemma~\ref{lem:delete2path}, $z\not\in S$ holds. 
Trace $Q$ from $x$. Obviously, the vertex that we encounter immediately after $x$ is in $V(H)\setminus S$. 
Keep tracing $Q$, and let $w$ be the first encountered vertex in $V(G)\setminus \vcoup{S}\setminus S$,  
and let $r$ be the vertex immediately before $w$; $r$ is in $S$, and the edge $rw$ is not in $M$.  
Hence, the path $xQr$ is an $M$-saturated path between $x\in S$ and $r\in S$, 
which is a contradiction by Lemma~\ref{lem:delete2path}.   
\end{proof}

\section{A New Proof of the Tight Cut Lemma}\label{sec:newproof} 
\subsection{General Statements} \label{sec:newproof:gen}
In  this Section~\ref{sec:newproof}, we introduce our new proof of the Tight Cut Lemma. 
Here, in Section~\ref{sec:newproof:gen}, we present definitions and assumptions that will be used throughout the remainder of the paper. We also explain the organization of the new proof and provide some lemmas.

\begin{rtight}
Let $\hat{G}$ be a brick, 
and $\hat{S}\subseteq  V(\hat{G})$ be such with $1 < |\hat{S}| < |V(\hat{G})|-1$.  
Then, there is a perfect matching with more than one edge in $\delta_{\hat{G}}(\hat{S})$. 
\end{rtight} 
Let $\hat{G}$ and $\hat{S}$ be as given  above. 
We need to prove that 
$\delta_{\hat{G}}(\hat{S})$ is not a tight cut. 
Let $\hat{M}$ be a perfect matching of $\hat{G}$. 
If $|\delta_{\hat{G}}(\hat{S})\cap \hat{M}| > 1$ holds, then we have nothing to do.  
Hence, in the following,  we  assume $|\delta_{\hat{G}}(\hat{S})\cap \hat{M}| = 1$ 
and prove $\hat{S}$ is not a tight cut by finding a {\em $\hat{S}$-fat} perfect matching, i.e., 
a perfect matching with more than one edge in $\delta_{\hat{G}}(\hat{S})$.    

Let  $\complement{\hat{S}}$ be $V(\hat{G})\setminus \hat{S}$.  
Let $u\in \hat{S}$ and $v\in \complement{\hat{S}}$ be such that $ \delta_{\hat{G}}(\hat{S}) \cap \hat{M}  = \{uv\}$. 
We denote $\hat{G}-u-v$ by $G$, $\hat{S}-u$ by $S$, $\complement{\hat{S}} -v$ by $\complement{S}$, 
and $\hat{M} -uv$ by $M$.   

Note that $G$ is connected and has a perfect matching $M$. 
Additionally, $\delta_{G}(S)\cap M = \emptyset$ holds in $G$.   
If $S$ is not a separating set, then of course $\delta_{\hat{G}}(\hat{S})$ is not a tight cut in $G$, 
and we are done. Therefore, in the following, we assume that 
\begin{quote} 
$S$ is a separating set of $G$ 
\end{quote}  
and prove the Tight Cut Lemma for this case. 

Without loss of generality, we also assume in the following that 
\begin{quote} 
$G$ has a border whose vertices are contained in $S$. 
\end{quote} 
According to Lemma~\ref{lem:circ2matching}, 
if we  find an $\hat{M}$-alternating circuit $C$ of $\hat{G}$ 
with more than one edges in $\delta_{\hat{G}}(\hat{S}) \setminus \hat{M}$, 
then a $\hat{S}$-fat perfect matching is obtained by taking $E(C)\triangle \hat{M}$. 
We find such an $\hat{M}$-alternating circuit  
by analyzing the matching structure of $G$ using the canonical decomposition in Section~\ref{sec:pre:cathedral}. 
The succeeding Sections~\ref{sec:newproof:casemixed} and \ref{sec:newproof:casewhole} 
correspond to proofs of the respective case analyses:  
\begin{itemize} 
\item In Section~\ref{sec:newproof:casemixed}, a proof is given for  the case where $\pmin{G}$ also has a factor-component whose vertex set is contained in $\complement{S}$;  
\item Section~\ref{sec:newproof:casewhole} is the counterpart to Section~\ref{sec:newproof:casemixed} and gives a proof for the case where every factor-component in $\pmin{G}$ has the vertex set contained in $S$, which completes the new proof of the Tight Cut Lemma. 
\end{itemize}

In the following, we present lemmas that will be used by Sections~\ref{sec:newproof:casemixed} and \ref{sec:newproof:casewhole} when we find a cut vertex in $G$. 
\begin{lemma}\label{lem:cut2nei}
Let $x$ be a cut vertex of $G$, and let $C$ be one of the connected components of $G-x$. 
Then, $\parNei{\hat{G}}{w}\cap V(C)\neq\emptyset$ holds for each $w\in \{u,v\}$. 
\end{lemma}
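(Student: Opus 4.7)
The plan is to exploit the strong connectivity built into the definition of a brick: for any two vertices $a, b \in V(\hat{G})$, the graph $\hat{G} - a - b$ is connected. In particular, both $\hat{G} - v - x$ and $\hat{G} - u - x$ are connected.

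I would argue by contradiction. Suppose that some $w \in \{u, v\}$ satisfies $\parNei{\hat{G}}{w} \cap V(C) = \emptyset$; by the symmetric roles of $u$ and $v$ in the argument, assume $w = u$. The goal is to exhibit a non-trivial partition of $V(\hat{G} - v - x)$ across which no edge of $\hat{G} - v - x$ passes, contradicting the connectivity of $\hat{G} - v - x$. The natural partition is $V(C)$ versus $A := (V(G) \setminus V(C) \setminus \{x\}) \cup \{u\}$. Since $x$ is a cut vertex of $G$, the graph $G - x$ has some connected component distinct from $C$, so $A$ is non-empty; $V(C)$ is non-empty as well. To verify that no edge of $\hat{G} - v - x$ crosses the partition, I would classify each such edge as either (a) an edge of $G - x$, which cannot cross because $C$ is a connected component of $G - x$, or (b) an edge incident to $u$, which by hypothesis has no endpoint in $V(C)$. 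The case $w = v$ is handled identically using $\hat{G} - u - x$ in place of $\hat{G} - v - x$.

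There is no genuine obstacle here; the argument is a direct application of the $3$-connectivity implied by the brick property. The only minor points to check are the non-emptiness of $A$ and the exhaustiveness of the edge classification in $\hat{G} - v - x$, and both follow immediately from the setup, since every edge of $\hat{G} - v - x$ either lies inside $V(G) \setminus \{x\}$ or is incident to $u$.
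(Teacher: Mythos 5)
Your proof is correct and takes essentially the same approach as the paper: both argue by contradiction that if $w$ has no neighbor in $C$, then $\{z, x\}$ (with $z$ the other of $\{u,v\}$) disconnects $\hat{G}$ with $C$ as one component, contradicting the connectivity guaranteed by the brick property (equivalently, 3-connectivity). You merely spell out the edge classification that the paper leaves implicit.
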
 

\begin{proof}
Suppose that the claim fails, i.e., 
suppose $\parNei{\hat{G}}{w}\cap V(C) = \emptyset$, where $w\in\{u,v\}$. 
It follows that $\{z, x\}$, where $z\in\{u,v\}\setminus\{w\}$, is a vertex-cut of $\hat{G}$, 
which leaves $C$ as one of the connected components of $\hat{G}-\{z, x\}$. 
This is a contradiction, because $\hat{G}$ is 3-connected.  
\end{proof} 

\begin{lemma}\label{lem:cut2saturated}
Let $x$ be a cut vertex of $G$, and let $C$ be one of the connected components of $G-x$. 
If $V(C)\cup \{x\}$ is a separating set of $G$, 
then,  for each $w\in \{u, v\}$, 
there exists $y\in V(C)\cap\parNei{\hat{G}}{w}$ such that $G$ has an $M$-saturated path between $x$ and $y$. 
\end{lemma}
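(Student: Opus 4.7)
The plan is to use Lemma~\ref{lem:delete2path} to reduce the task to exhibiting, for each $w\in\{u,v\}$, some $y\in V(C)\cap\parNei{\hat{G}}{w}$ such that $G-x-y$ is factorizable. I would construct the required perfect matching of $G-x-y$ directly from the brick property of $\hat{G}$, rather than trying to modify $M$ along an ad hoc path.

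Fix $w\in\{u,v\}$ and let $z$ denote the other element of $\{u,v\}$. Since $\hat{G}$ is a brick, $\hat{G}-x-z$ admits a perfect matching $N$; let $y$ be the $N$-partner of $w$. Then $N\setminus\{wy\}$ is a perfect matching of $\hat{G}-x-z-w-y=G-x-y$, so $G-x-y$ is factorizable, and $wy\in E(\hat{G})$ gives $y\in\parNei{\hat{G}}{w}$. Combined with Lemma~\ref{lem:delete2path}, this yields the desired $M$-saturated path between $x$ and $y$ in $G$, so the entire burden of the proof is to ensure that the vertex $y$ produced this way actually lies in $V(C)$.

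The main obstacle, locating $y$ inside $V(C)$, I would overcome by a parity count. Because $x$ is a cut vertex of $G=\hat{G}-u-v$ with $C$ a component of $G-x$, every vertex outside $V(C)$ that has a neighbor in $V(C)$ must lie in $\{x,u,v\}$; hence in $\hat{G}-x-z$ the only external vertex adjacent to $V(C)$ is $w$. At the same time, the hypothesis that $V(C)\cup\{x\}$ is a separating set means it is a union of vertex sets of factor-components, each of which has even order; so $|V(C)\cup\{x\}|$ is even and $|V(C)|$ is odd. Since $N$ covers $V(C)$, internal $N$-edges account for an even number of endpoints in $V(C)$, forcing the number of $N$-edges leaving $V(C)$ to be odd. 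All such edges must have their other end at $w$, and $w$ is incident to at most one edge of $N$, so there is exactly one crossing edge, namely $wy$; hence $y\in V(C)$, as required. Applying this argument once for $w=u$ and once for $w=v$ completes the proof.
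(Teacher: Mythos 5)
Your proof is correct, but it takes a genuinely different route from the paper's argument. The paper obtains an $\hat{M}$-saturated path $P$ in $\hat{G}$ between $x$ and $z$ (which exists by Lemma~\ref{lem:delete2path} since $\hat{G}$ is a brick), observes that tracing $P$ from $z$ first crosses the $\hat{M}$-edge $zw$, and shows that the truncated subpath $xPy$ --- where $y$ is the $P$-predecessor of $w$ --- is the required $M$-saturated path. There the separating-set hypothesis is used to see that the $M$-partner of $x$ lies in $V(C)$, and the cut-vertex status of $x$ then keeps $xPy$ inside $V(C)\cup\{x\}$. You instead invoke the brick property to obtain a perfect matching $N$ of $\hat{G}-x-z$, take $y$ to be the $N$-partner of $w$, observe that deleting $wy$ leaves a perfect matching of $G-x-y$, and then appeal to Lemma~\ref{lem:delete2path} in the other direction to manufacture the path. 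The separating-set hypothesis enters only through the parity count: it makes $|V(C)\cup\{x\}|$ even, hence $|V(C)|$ odd, forcing an odd number of $N$-edges to leave $V(C)$, and since $w$ is the sole external neighbour of $V(C)$ in $\hat{G}-x-z$ there can be only one such edge, so $y\in V(C)$. Both arguments are sound; yours trades the path-tracing for a clean, self-contained parity argument, whereas the paper's version is more explicit about how the resulting $M$-saturated path actually sits inside $C$.
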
 

\begin{proof}
Let $z\in \{u, v\}\setminus\{w\}$. 
As $\hat{G}$ is a brick, there is an $\hat{M}$-saturated path, $P$, between $x$ and $z$. 
If we trace $P$ from $z$, then the second vertex on $P$ is $w$ and the third vertex, $y$, 
is such with $y\in \parNei{\hat{G}}{w}\cap V(C)$ and that $xPy$ is an $\hat{M}$-saturated path, 
for which $V(xPy)\subseteq V(C)$ holds. Therefore, $xPy$ gives a desired path. 
\end{proof}

\subsection{When there exists a factor-component in $\pmin{G}$ whose vertices are in $\complement{S}$}
\label{sec:newproof:casemixed} 
In  this Section~\ref{sec:newproof:casemixed}, 
we assume that $\pmin{G}$ has a factor-component, which may be or may not be a border,  whose vertex set is contained in $\complement{S}$.  
We prove the Tight Cut Lemma for this case,  using mainly the results obtained in Section~\ref{sec:tower}.  
The next lemma is obtained from Lemmas~\ref{lem:deg1path} and \ref{lem:cut2nei}  and will be used in the proof of Lemma~\ref{lem:arc2matching}.  
\begin{lemma}\label{lem:deg1nei}
Let $H\in\border{G}$, and let $S\in\pargpart{G}{H}$ be the port of $H$. 
Then, 
$\vparcoup{G}{S}\cap \parNei{\hat{G}}{w} \neq \emptyset$ holds for each $w\in\{u,v\}$. 
\end{lemma}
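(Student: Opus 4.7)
I plan to argue by contradiction. Suppose $\parNei{\hat{G}}{w}\cap\vparcoup{G}{S}=\emptyset$ for some $w\in\{u,v\}$, and let $w'$ denote the other element of $\{u,v\}$.

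My first step establishes a structural separation inside $G$: every edge from $\vparcoup{G}{S}$ to its complement has its non-$\vparcoup{G}{S}$ endpoint in $S$. Writing $\vparcoup{G}{S}=\bigcup_{T\in\pargpart{G}{H}\setminus\{S\}}\vparupstar{G}{T}$, the border hypothesis on $H$ forbids edges of $E_G[\vparupstar{G}{T},V(G)\setminus\vparupstar{G}{H}]$ for $T\neq S$, while Theorem~\ref{thm:cor} forbids edges between $\vup{T}$ and $\vparupstar{G}{S}$ (each component of $G[\vparup{G}{H}]$ attaches to $V(H)$ through a single class). The only remaining cross-edges from $\vparcoup{G}{S}$ are the internal $V(H)$-edges from $V(H)\setminus S$ into $S$. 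Under the contradiction hypothesis, this upgrades to the following fact in $\hat{G}$: every $\hat{G}$-edge leaving $\vparcoup{G}{S}$ lands in $S\cup\{w'\}$.

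My second step invokes Lemma~\ref{lem:deg1path}, which identifies $\vparcoup{G}{S}$ as exactly the vertices reachable from $S$ by $M$-saturated $G$-paths. In particular $\vparcoup{G}{S}\neq\emptyset$, and for any chosen $z\in\vparcoup{G}{S}$ there is an $M$-saturated $sz$-path in $G$ for some $s\in S$. This gives the concrete path structure needed to pin down where $S$ and $\vparcoup{G}{S}$ are stitched together.

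My third step applies Lemma~\ref{lem:cut2nei}. The aim is to select $x\in S$ so that $x$ is a cut vertex of $G$ and some component $C$ of $G-x$ is contained in $\vparcoup{G}{S}$. Lemma~\ref{lem:cut2nei} then yields $\parNei{\hat{G}}{w}\cap V(C)\neq\emptyset$, and hence $\parNei{\hat{G}}{w}\cap\vparcoup{G}{S}\neq\emptyset$, contradicting our assumption.

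The hard part will be the third step — securing the cut vertex. When $|S|=1$, the unique element of $S$ is itself a cut vertex of $G$ by Step~1's separation, and one component of $G-x$ sits entirely inside $\vparcoup{G}{S}$, so the argument closes immediately. When $|S|\geq 2$, no single $s\in S$ need disconnect $G$; here I would exploit the refined separator $S\cup\{w'\}$ of $\vparcoup{G}{S}$ in $\hat{G}$ established in Step~1, together with the $3$-connectivity of the brick $\hat{G}$ and the $M$-saturated path supplied by Step~2, to locate an $x\in S$ whose removal in $G$ still isolates a piece of $\vparcoup{G}{S}$, reducing the remaining cases to the one resolved by Lemma~\ref{lem:cut2nei}.
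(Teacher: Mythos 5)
Your case $|S|=1$ is exactly the paper's, but the case $|S|\geq 2$ has a real gap, and you yourself flag where: no single $s\in S$ need be a cut vertex of $G$. Your Step~1 correctly shows that every $G$-edge leaving $\vparcoup{G}{S}$ lands in $S$, so $S$ is the relevant separator, not any one of its elements. When $|S|\geq 2$ the boundary of $\vparcoup{G}{S}$ is typically spread across all of $S$, and removing just one $x\in S$ will in general leave $\vparcoup{G}{S}$ attached to the rest of $G$ through the remaining vertices of $S$; Lemma~\ref{lem:cut2nei} then simply does not apply. Invoking the $3$-connectivity of $\hat{G}$ does not repair this — if anything it pushes the other way, since higher connectivity makes it harder, not easier, for a single vertex in $G=\hat G - u - v$ to be a cut vertex — and the single $M$-saturated path from Step~2 does not by itself pin down any cut vertex. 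So Step~3 as sketched would not go through for $|S|\geq 2$.

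The paper's actual argument for $|S|\geq 2$ avoids cut vertices entirely and is direct rather than by contradiction: pick distinct $x,y\in S$. Because $\hat G$ is a brick, $\hat G - x - y$ has a perfect matching, so by Lemma~\ref{lem:delete2path} there is an $\hat M$-saturated path $P$ in $\hat G$ between $x$ and $y$. Since $x,y$ lie in the same class $S\in\pargpart{G}{H}$, $G-x-y$ has no perfect matching, so again by Lemma~\ref{lem:delete2path} the path $P$ is not contained in $G$; hence $uv\in E(P)$. Writing $z_1\in\parNei{\hat G}{u}\cap V(P)\setminus\{v\}$ and $z_2\in\parNei{\hat G}{v}\cap V(P)\setminus\{u\}$ so that $x,z_1,z_2,y$ occur in that order along $P$, the subpaths $xPz_1$ and $yPz_2$ are $M$-saturated paths of $G$ starting in $S$. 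Lemma~\ref{lem:deg1path} — which you cite but do not actually exploit — then places $z_1,z_2\in\vparcoup{G}{S}$, giving the required neighbors of $u$ and $v$ in $\vparcoup{G}{S}$ without any appeal to cut vertices. Replacing your Step~3 for $|S|\geq 2$ with this path-splitting argument would close the gap.
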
 

\begin{proof}
First, consider the case where $S$ is a singleton, which consists of $x\in V(H)$. 
Then, $x$ is a cut vertex of $G$, 
and for some connected components $C_1,\ldots, C_k$ of $G-x$ (in fact $k=1$ holds),  
$V(C_1)\cup\cdots\cup V(C_k)\cup\{x\}$ is equal to $\vparcoup{G}{S}$. 
Therefore, Lemma~\ref{lem:cut2nei} proves the statement for this case. 

Next, consider the case with $|S| >  1$. 
Let $x, y\in S$ be such with $x\neq y$. 
As $\hat{G}$ is a brick, it has a $\hat{M}$-saturated path $P$ between $x$ and $y$. 
By Lemma~\ref{lem:delete2path}, $P$ is not a path of $G$, which implies $uv\in E(P)$. 
Let $z_1\in\parNei{\hat{G}}{u}\cap V(P) \setminus \{v\}$ and $z_2\in \parNei{\hat{G}}{v}\cap V(P) \setminus \{u\}$, and assume, without loss of generality, that  $x$, $z_1$, $z_2$, $y$ appear in this order if we trace $P$ from $x$. 
Then,  $xPz_1$ and $yPz_2$ are $M$-saturated paths of $G$.  

From Lemma~\ref{lem:deg1path}, 
we obtain $z_1, z_2\in \vparcoup{G}{S}$. 
The lemma is proven. 
\end{proof} 

Lemma~\ref{lem:deg1nei} derives the next lemma, 
which provides the main strategy to find a desired $\hat{M}$-alternating circuit.  

\begin{lemma}\label{lem:arc2matching}
If $G$ has a spanning $M$-arc with an edge in $E_G[S,\complement{S}]$, 
then $\hat{G}$ has a $\hat{S}$-fat perfect matching.
\end{lemma}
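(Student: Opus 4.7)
The strategy is to lift the spanning $M$-arc in $G$ to an $\hat{M}$-alternating circuit $C$ in $\hat{G}$ that uses the matching edge $uv$ together with at least two further edges of $\delta_{\hat{G}}(\hat{S})$; then by Lemma~\ref{lem:circ2matching} the perfect matching $E(C)\triangle\hat{M}$ will contain more than one edge of $\delta_{\hat{G}}(\hat{S})$, as required. Let $P$ be the hypothesized arc, with ends $y_1\in S_1$ and $y_2\in S_2$, where $S_i$ is the port of the border $H_i\in\border{G}$ containing $y_i$; the fact that the ends land in the ports follows from the construction of spanning arcs in Lemma~\ref{lem:minseq2arc}.

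The plan is to build $C$ by gluing $P$ to two $M$-saturated detours through the lower regions $\vcoup{S_1}$ and $\vcoup{S_2}$, bridged by the edge $uv$. For a choice $(w_1,w_2)\in\{(u,v),(v,u)\}$ to be fixed below, I would use Lemma~\ref{lem:deg1nei} to pick $a_i\in\vcoup{S_i}\cap\parNei{\hat{G}}{w_i}$ for $i=1,2$, and then Lemma~\ref{lem:deg1path} to obtain $M$-saturated paths $Q_i$ in $G$ from $y_i$ to $a_i$. The resulting closed walk is
\[
C\ :\ y_1\ \stackrel{P}{\longrightarrow}\ y_2\ \stackrel{Q_2^{-1}}{\longrightarrow}\ a_2\ \longrightarrow\ w_2\ \longrightarrow\ w_1\ \longrightarrow\ a_1\ \stackrel{Q_1^{-1}}{\longrightarrow}\ y_1.
\]
The $\hat{M}$-alternation of $C$ is immediate: $P$ is $M$-exposed and so ends in non-$\hat{M}$ edges on both sides; each $Q_i$ is $M$-saturated and so ends in $\hat{M}$ edges; the central edge $uv$ lies in $\hat{M}$; and the two bridging edges $a_iw_i$ lie outside $\hat{M}$ because $u$ and $v$ are $\hat{M}$-matched to each other.

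To check that $C$ is a simple circuit, I would appeal to Lemma~\ref{lem:arcdisjoint}, which gives $V(P)\cap(\vcoup{S_1}\cup\vcoup{S_2})=\emptyset$; together with $V(Q_i)\setminus\{y_i\}\subseteq\vcoup{S_i}$ (the latter deduced by examining the proof of Lemma~\ref{lem:deg1path} and using Lemma~\ref{lem:delete2path} to rule out a second visit to $S_i$), this gives $V(P)\cap V(Q_i)=\{y_i\}$. The disjointness of $V(Q_1)$ and $V(Q_2)$ follows from the sealing property of the border port, namely that $\vcoup{S_i}$ admits no $G$-edges leaving $\vupstar{H_i}$, combined with the fact that $H_1$ and $H_2$ are distinct minimal factor-components, so that their lower regions cannot be joined by the $M$-saturated paths $Q_i$ running through them.

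Finally, a parity count on the three segments of $C$ relative to the partition $(\hat{S},\complement{\hat{S}})$ delivers enough cut edges. Every edge of $P\cup Q_1\cup Q_2$ that crosses $\delta_{\hat{G}}(\hat{S})$ lies outside $\hat{M}$ (since $\delta_G(S)\cap M=\emptyset$), and likewise the bridging edges $a_iw_i$ lie outside $\hat{M}$. When $y_1$ and $y_2$ lie on the same side of the cut, $P$ alone must cross an even and positive number of times, contributing $\ge 2$ non-$\hat{M}$ cut edges regardless of $(w_1,w_2)$. When $y_1$ and $y_2$ lie on opposite sides, choosing $(w_1,w_2)=(v,u)$ forces each of the three segments $P$, $Q_1\cup\{a_1w_1\}$, and $\{w_2a_2\}\cup Q_2$ to cross the cut an odd number of times, for a total of at least three non-$\hat{M}$ cut edges. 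In either case $|E(C)\triangle\hat{M}\cap\delta_{\hat{G}}(\hat{S})|\ge 2$, producing the desired $\hat{S}$-fat perfect matching. The principal obstacle will be the disjointness of $Q_1$ and $Q_2$, since in principle the towers over $H_1$ and $H_2$ could share upper bounds; this must be ruled out using the border-port sealing and the minimality of $H_1,H_2$.
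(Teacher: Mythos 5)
Your construction is essentially the paper's: an $\hat{M}$-alternating circuit $C$ built from the spanning arc $P$, $M$-saturated detours $Q_1,Q_2$ into the sealed-off regions $\vcoup{S_1},\vcoup{S_2}$ obtained via Lemma~\ref{lem:deg1nei}, the two bridging edges through $\{u,v\}$, and a final appeal to Lemma~\ref{lem:circ2matching}; the paper sources the $Q_i$ from Lemma~\ref{lem:tpath}~\ref{item:tpath:base2coup} (which directly yields $V(Q_i)\setminus\{s_i\}\subseteq\vcoup{S_i}$) rather than reverse-engineering Lemma~\ref{lem:deg1path}, but that difference is cosmetic. Two remarks. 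The cut-edge count does not need the free parameter $(w_1,w_2)$ nor the three-segment parity argument: fixing WLOG $V(H_1)\subseteq S$ and taking $t_1\in\parNei{\hat{G}}{v}\cap\vcoup{S_1}$, $t_2\in\parNei{\hat{G}}{u}\cap\vcoup{S_2}$, the arc $P$ already supplies one non-$\hat{M}$ edge of $\delta_{\hat{G}}(\hat{S})$ by hypothesis, and a second comes from $t_1v$ when $t_1\in S$, or from $Q_1$ when $t_1\in\complement{S}$ since its other end $s_1$ lies in $S$ --- exactly the paper's two-case analysis. As for the disjointness of $Q_1$ and $Q_2$: you are right that it is not automatic (towers over distinct minimal components may well share upper bounds), but note that the paper's own proof also passes over it in silence after invoking Lemma~\ref{lem:arcdisjoint}. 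Your sketch is the correct one --- a vertex of $\vcoup{S_1}\cap\vcoup{S_2}$ would lie in $\vup{T_1}$ for some non-port $T_1$, the border sealing confines the whole connected component of $G[\vup{H_2}]$ containing it inside $\vupstar{H_1}$, and this component then cannot have neighbours in $V(H_2)$, a contradiction --- so what you flag as the principal obstacle is in fact a short argument that the paper should arguably have included; writing it out would be an improvement rather than a liability.
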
  

\begin{proof}
Let $P$ be a spanning $M$-arc with $E(P)\cap E_G[S,\complement{S}]\neq\emptyset$, 
between two borders $H_1$ and $H_2$.   
Let $s_1$ and $s_2$ be the ends of $P$, and 
let $S_1\in\pargpart{G}{H_1}$ and $S_2\in\pargpart{G}{H_2}$ be such with $s_1\in S_1$ and $s_2\in S_2$. 
Without loss of generality, we can assume $V(H_1)\subseteq S$; 
if $V(H_1)\subseteq \complement{S}$ holds, 
then we can exchange the roles of $S$ and $\complement{S}$ 
without contradicting  the assumption on $\border{G}$. 
According to Lemma~\ref{lem:deg1nei}, 
there exist $t_1\in \parNei{\hat{G}}{v} \cap \vparcoup{G}{S_1}$ 
and $t_2\in\parNei{\hat{G}}{u}\cap \vparcoup{G}{S_2}$. 
By Lemma~\ref{lem:tpath} \ref{item:tpath:base2coup},  
there exists an $M$-saturated path $Q_i$ from $t_i$ to $s_i$  with $V(Q_i)\setminus \{s_i\}\subseteq \vparcoup{G}{S_i}$ for each $i\in\{1,2\}$. 
According to Lemma~\ref{lem:arcdisjoint}, 
$P$ is disjoint from $Q_1$ and $Q_2$ except for the ends.  
Therefore,  $C$ is an $\hat{M}$-alternating circuit of $\hat{G}$, 
where $C:= Q_1 + t_1v + uv + vt_2 + Q_2 + P$. 

If $t_1\in S$ holds, then $t_1v \in E_{\hat{G}}[\hat{S}, \complement{\hat{S}}]\setminus \hat{M}$ holds 
and therefore $|E(C)\cap E_{\hat{G}}[\hat{S}, \complement{\hat{S}}\setminus \hat{M}]|\ge 2$ follows. 
Hence, from Lemma~\ref{lem:circ2matching}, $\hat{M}\triangle E(C)$ is a $\hat{S}$-fat perfect matching. 
Otherwise, if $t_1\in \complement{S}$ holds, 
then $Q_1$ has an edge in $E_G[S, \complement{S}]$ because the other end $s_1$ is in $S$. Hence, again,  $|E(C)\cap E_{\hat{G}}[\hat{S}, \complement{\hat{S}}]\setminus \hat{M}|\ge 2$ follows, 
and the statement is proven for this case, too. 
This completes the proof of this lemma. 
\end{proof}

As Lemma~\ref{lem:arc2matching} is obtained, 
we give the following two lemmas to find such a spanning $M$-arc.

\begin{lemma}\label{lem:halfopen} 
If $G$ also has a border whose vertices are in $\complement{S}$, 
then there is a spanning $M$-arc that has an edge in $E_G[S,\complement{S}]$. 
\end{lemma}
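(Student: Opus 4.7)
The plan is to produce a spanning $M$-arc one of whose two endpoints lies in $S$ and the other in $\complement{S}$; such an arc is automatically a path joining the two sides of the cut and must therefore contain at least one edge of $E_G[S,\complement{S}]$. Concretely, let $H_S\in\border{G}$ be a border with $V(H_S)\subseteq S$ (guaranteed by the assumption at the end of Section~\ref{sec:newproof:gen}) and let $H_C\in\border{G}$ be a border with $V(H_C)\subseteq \complement{S}$ (supplied by the hypothesis of this lemma). I aim to build a tower-sequence $H_S = H_1, H_2, \ldots, H_k = H_C$ in $\pmin{G}$ and then apply Lemma~\ref{lem:minseq2arc}\ref{item:minseq2arc:arc} to extract an $M$-arc between $H_S$ and $H_C$; spanningness is automatic because both endpoints are borders.

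The workhorse is the observation that the tower-adjacency relation makes $\pmin{G}$ into a connected graph. Since every factor-component is above some minimal one, $V(G)=\bigcup_{H\in\pmin{G}}\vupstar{H}$. For any bipartition $\pmin{G}=A\sqcup B$ with both parts non-empty, the sets $V_A:=\bigcup_{H\in A}\vupstar{H}$ and $V_B:=\bigcup_{H\in B}\vupstar{H}$ cover $V(G)$, and connectedness of $G$ forces either an edge of $G$ joining $V_A$ to $V_B$ (directly yielding the $E[\vupstar{\cdot},\vupstar{\cdot}]\neq\emptyset$ clause of t-adjacency for some $H\in A,H'\in B$) or a common vertex. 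Such a common vertex cannot be in $V(H)\cup V(H')$ for distinct minimal $H,H'$ because minimal factor-components are incomparable, so it must lie in $\vup{H}\cap\vup{H'}$, which produces the other clause of t-adjacency. Hence a path $H_S=H_1,\ldots,H_k=H_C$ exists in the tower-adjacency graph, and I will take one of minimum length.

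With the shortest such path chosen, I pick a port $S_i^+\in\pargpart{G}{H_i}$ and $S_{i+1}^-\in\pargpart{G}{H_{i+1}}$ witnessing each consecutive t-adjacency; feeding this data into Lemma~\ref{lem:minseq2arc}\ref{item:minseq2arc:arc} delivers an $M$-arc $P$ whose ends lie in $S_1^+\subseteq V(H_S)\subseteq S$ and $S_k^-\subseteq V(H_C)\subseteq\complement{S}$. Because $P$ is a path joining the two sides of the separating set, at least one of its edges lies in $E_G[S,\complement{S}]$, and because $H_S$ and $H_C$ are borders the arc $P$ is spanning, completing the argument.

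The main obstacle will be verifying the well-formedness of the tower-sequence produced by the shortest-path procedure, namely, that the ``incoming'' port $S_i^-$ and ``outgoing'' port $S_i^+$ at each interior $H_i$ can be chosen distinct, as required by the definition of a tower-sequence. I expect that if $S_i^+=S_i^-$, then both $\tower{H_{i-1}}$ and $\tower{H_{i+1}}$ attach to $\tower{H_i}$ through the single branch $\vupstar{S_i^+}$; then combining Lemma~\ref{lem:adjacent2arc} applied to both pairs with Lemma~\ref{lem:tpath}\ref{item:tpath:up2base} (to route inside $\vupstar{S_i^+}$) and Lemma~\ref{lem:towerrelative} (to prevent unwanted non-trivial ears from appearing as shortcuts) will either exhibit a direct t-adjacency $\tower{H_{i-1}}\sim\tower{H_{i+1}}$, contradicting minimality of the chosen path, or permit a local reroute at $H_i$ restoring distinct ports without increasing length. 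Handling this port-distinctness check is the only nontrivial step; everything else is assembly.
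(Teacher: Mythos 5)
Your overall strategy is the intended one, and the two supporting observations you make are both correct: the tower-adjacency relation on $\pmin{G}$ is connected (your bipartition argument works, and this same observation drives the paper's Claim inside the proof), and an $M$-arc with one end in $S$ and one end in $\complement{S}$ automatically has an edge of $E_G[S,\complement{S}]$. But the step you flag at the end as merely ``the only nontrivial step'' is in fact a genuine gap, and your sketched repair does not close it. A path in the tower-adjacency graph is \emph{not} automatically a tower-sequence: the definition requires, at every interior $H_i$, a choice of \emph{distinct} incoming and outgoing ports $S_i^-\neq S_i^+$, and only such a well-formed sequence can be fed into Lemma~\ref{lem:minseq2arc}\ref{item:minseq2arc:arc}. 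Taking a shortest path does not guarantee this. In particular your hoped-for dichotomy fails: if $S_i^-=S_i^+=T$, the two neighbouring towers $\tower{H_{i-1}}$ and $\tower{H_{i+1}}$ both attach to $\vupstar{T}$, but $\vup{T}$ can split into several connected components of $G[\vup{H_i}]$, and the two towers may attach to disjoint such components with no edge between $\vupstar{H_{i-1}}$ and $\vupstar{H_{i+1}}$. Then there is neither a direct t-adjacency (so no contradiction with shortest length) nor an obvious local reroute, and you have no tower-sequence and hence no arc.

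The paper never constructs a shortest path and so never meets this obstacle. It partitions the question differently: it defines $\mathcal{H}_1$ (resp.\ $\mathcal{H}_2$) as those $H\in\pmin{G}$ reachable by \emph{some} tower-sequence starting at a border contained in $S$ (resp.\ in $\complement{S}$), uses Lemma~\ref{lem:min2span}\ref{item:min2span:seq} to get $\mathcal{H}_1\cup\mathcal{H}_2=\pmin{G}$, and then shows $\mathcal{H}_1\cap\mathcal{H}_2\neq\emptyset$ by a connectedness argument together with an \emph{append-one-element} splice. The port-distinctness is handled by always modifying an existing, already well-formed tower-sequence at a single element $H$: since that $H$ sits inside a valid sequence with two \emph{distinct} ports $S_i^-\neq S_i^+$, any new t-adjacency port at $H$ must differ from at least one of them, which is exactly enough slack to splice onto one side or the other. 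Lemma~\ref{lem:seq2span} and its internal Claim are what make this incremental construction clean: a non-border always has a second usable port precisely because it is not a border. That is the mechanism your shortest-path route is missing. If you want to salvage your plan, you should abandon ``pick a shortest path'' and instead grow a tower-sequence one element at a time starting from a single $H$, as Lemma~\ref{lem:seq2span} does, so that at every extension step you only need one fresh port and never need to reconcile two forced ports.
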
 

\begin{proof}
Define $\mathcal{H}_1\subseteq \pmin{G}$ (resp. $\mathcal{H}_2\subseteq \pmin{G}$)  as follows: 
$H\in\pmin{G}$ is in $\mathcal{H}_1$ (resp. $\mathcal{H}_2$) if 
there is a tower-sequence from a border whose vertex set is contained in $S$ (resp. $\complement{S}$) 
to $H$. 

By Lemma~\ref{lem:min2span} \ref{item:min2span:seq},  
$\mathcal{H}_1\cup\mathcal{H}_2 =\pmin{G}$. 
\begin{pclaim}\label{claim:halfopen:intersect} 
The two sets  $\mathcal{H}_1$ and $\mathcal{H}_2$ intersect. 
\end{pclaim} 
\begin{proof} 
Suppose that this claim fails, namely, that $\mathcal{H}_1\cap\mathcal{H}_2 = \emptyset$ holds. 
As $G$ is connected, the two sets of vertices $\bigcup_{H\in\mathcal{H}_1} \vparupstar{G}{H}$ and $\bigcup_{H\in\mathcal{H}_2} \vparupstar{G}{H}$ either intersect, or are disjoint with some edges joining them. 
This implies that there exist $H_1\in\mathcal{H}_1$ and $H_2\in \mathcal{H}_2$ such that $\partower{G}{H_1}$ and $\partower{G}{H_2}$ are t-adjacent. 
By Lemma~\ref{lem:min2span} \ref{item:min2span:seq}, 
there is a spanning tower-sequence $I_1,\ldots, I_k\in\pmin{G}$ with $k\ge 2$ and $H_1 = I_i$ for some $i\in\{1,\ldots,k\}$. 
If $V(I_1)\subseteq \complement{S}$ or $V(I_k)\subseteq \complement{S}$ hold, 
then $H_1\in \mathcal{H}_1\cap\mathcal{H}_2$, which is a contradiction. Otherwise, if $V(I_1)\subseteq S$ and $V(I_k)\subseteq S$, then  
either $I_1,\ldots,I_i, H_2$ or $H_2, I_i,\ldots, I_k$ is a tower-sequence. 
Thus, $H_2 \in \mathcal{H}_1\cap\mathcal{H}_2$ holds, which is again a contradiction. \end{proof}

\begin{pclaim}\label{claim:halfopen:spanseq}
There is a spanning tower-sequence from a border whose vertex set is contained in $S$ 
to a border whose vertex set is contained in $\complement{S}$.  
\end{pclaim} 
\begin{proof} 
By Claim~\ref{claim:halfopen:intersect}, there exists $H\in \mathcal{H}_1 \cap \mathcal{H}_2$. 
By $H\in\mathcal{H}_1$, there is a tower-sequence from $H_1\in\border{G}$  to $H$ with $V(H_1)\subseteq S$. 
Hence, by Lemma~\ref{lem:seq2span}, there is a spanning tower-sequence $H_1,\ldots, H_k\in\pmin{G}$ 
with $k\ge 2$ and $H = H_i$ for some $i\in\{1,\ldots,k\}$. 
If $V(H_k)\subseteq \complement{S}$ holds, we are done; 
thus, let $V(H_k)\subseteq S$. 
By $H\in\mathcal{H}_2$, there is a tower-sequence $I_1,\ldots, I_l\in\pmin{G}$ with $l\ge 1$, $I_1\in\border{G}$, $V(I_1)\subseteq \complement{S}$, and $I_l = H$. 
Either $H_1,\ldots, H_i = H = I_l, \ldots, I_1$ or $H_k,\ldots, H_i = H = I_l, \ldots, I_1$ forms a spanning tower-sequence, 
satisfying the statement of this claim. 
\end{proof} 

By Lemma~\ref{lem:minseq2arc} \ref{item:minseq2arc:arc} and Claim~\ref{claim:halfopen:spanseq}, 
we obtain a desired spanning $M$-arc. 
\end{proof}

The next lemma treats the counterpart case to Lemma~\ref{lem:halfopen}. 

\begin{lemma}\label{lem:surrounded} 
Assume every border of $G$ has the vertex set that is contained in $S$. 
If there exists a non-border element of $\pmin{G}$ whose vertex set is contained in $\complement{S}$, then there is a spanning $M$-arc that has some edges in $E_G[S,\complement{S}]$. 
\end{lemma}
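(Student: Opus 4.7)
The plan is to exploit the given non-border component $H \in \pmin{G}$ with $V(H) \subseteq \complement{S}$ to construct a spanning $M$-arc that is forced to cross $E_G[S, \complement{S}]$. The key idea is that if $H$ appears strictly between the two endpoints of a spanning tower-sequence, then the $M$-arc extracted from that sequence must traverse $V(H) \subseteq \complement{S}$, while its two ends, lying in borders, are constrained to lie in $S$ by the standing hypothesis.

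First I would start from the trivial tower-sequence consisting of $H$ alone and apply Lemma~\ref{lem:seq2span} to obtain a spanning tower-sequence $I_1, \ldots, I_l \in \pmin{G}$ with $H = I_j$ for some $j \in \{1, \ldots, l\}$. The critical observation is that $j$ must satisfy $2 \leq j \leq l - 1$: by the definition of \emph{spanning} the elements $I_1$ and $I_l$ are borders of $G$, whereas $H$ is not a border by hypothesis. Consequently $l \geq 3$ and $H$ sits strictly in the interior of the tower-sequence.

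Next I would apply Lemma~\ref{lem:minseq2arc}~\ref{item:minseq2arc:arc} to the spanning tower-sequence $I_1, \ldots, I_l$ to produce an $M$-arc $P$ between $I_1$ and $I_l$ that traverses each of $I_2, \ldots, I_{l-1}$; in particular $P$ contains an interior vertex in $V(H) \subseteq \complement{S}$. Under the hypothesis of the lemma, both $V(I_1)$ and $V(I_l)$ are contained in $S$, so the two ends of $P$ belong to $S$. Any path with both ends in $S$ and at least one interior vertex in $\complement{S}$ must contain at least one edge of $E_G[S, \complement{S}]$, so $P$ is a spanning $M$-arc meeting $E_G[S, \complement{S}]$, as required.

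The only step requiring genuine care is the index argument guaranteeing that $H$ lies strictly between the endpoints of the spanning tower-sequence; everything else is a direct appeal to the machinery of Section~\ref{sec:tower}. I do not expect any serious obstacle here, since this is precisely the kind of configuration the notion of a border was designed to detect.
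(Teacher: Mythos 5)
Your proposal is correct and follows essentially the same route as the paper: the paper invokes Lemma~\ref{lem:min2span}~\ref{item:min2span:arc} directly to produce a spanning $M$-arc $P$ meeting $H$, and then notes that since all borders lie in $S$ the ends of $P$ lie in $S$, so $P$ must cross $E_G[S,\complement{S}]$. What you have done is unpack that cited lemma---starting from the trivial tower-sequence on $H$, extending it via Lemma~\ref{lem:seq2span}, applying Lemma~\ref{lem:minseq2arc}~\ref{item:minseq2arc:arc}, and making explicit the index argument showing $H$ sits strictly in the interior of the spanning sequence (which the paper leaves implicit by simply remarking that the ends of $P$ are in $S$).
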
   

\begin{proof}
Let $H\in\pmin{G}$ be such with $V(H)\subseteq \complement{S}$. 
As given in Lemma~\ref{lem:min2span} \ref{item:min2span:arc},  
take a spanning $M$-arc $P$ with $V(P)\cap V(H)\neq \emptyset$. 
The ends of $P$ are in $S$, so $P$ has at least two edges in $E_G[S,\complement{S}]$. 
\end{proof}

Regardless of whether there is a border that has the vertex set in $\complement{S}$ or not, 
Lemmas~\ref{lem:halfopen} and \ref{lem:surrounded} assure that 
$G$ has an $M$-arc with an edge in $\delta_{G}(S)$. 
Hence, from Lemma~\ref{lem:arc2matching}, 
we conclude that $\hat{G}$ has a $\hat{S}$-perfect matching, 
and the Tight Cut Lemma is proven for the case of Section~\ref{sec:newproof:casemixed}.

\subsection{When every factor-component in $\pmin{G}$ has the vertex set contained in $S$}
\label{sec:newproof:casewhole}
\subsubsection{Shared Assumptions and Lemmas}\label{sec:newproof:casewhole:shared}
Here in Section~\ref{sec:newproof:casewhole}, 
we assume that the vertex set of any factor-component in $\pmin{G}$ is contained in $S$ 
and prove the Tight Cut Lemma under this assumption.    

Section~\ref{sec:newproof:casewhole:shared} explains the assumptions, definitions, and lemmas that will be used throughout Section~\ref{sec:newproof:casewhole}. 
Let $S_0\subseteq S$ be the inclusion-wise maximal separating subset of $S$ such that $\{H_1, \ldots, H_p\}$ is a lower-ideal of $\poset{G-S}$, where $S_0 = V(H_1)\dot{\cup}\cdots\dot{\cup} V(H_p)$. 
Arbitrarily choose  a connected component $C$ of $G-S_0$. 
Sections~\ref{sec:newproof:casewhole:cut} and \ref{sec:newproof:casewhole:proper}  give the proofs of the Tight Cut Lemma for the cases where $|\parNei{G}{C}\cap S_0| = 1$ holds and does not hold, respectively.  
Note that $V(C)$ is a separating set in $G$ 
and  $C$ is factorizable.  
In addition, note that for each $H\in\pmin{C}$, $V(H)\subseteq \complement{S}$ holds  
by the definition of $S_0$.   
The following two lemmas will be used in both of the succeeding case analyses. 
\begin{lemma}\label{lem:min2ear}\label{claim:min2ear}
For each $H\in\pmin{C}$, $G$ has an $M$-ear, $P_H$, relative to $S_0$ and traversing $H$. 
\end{lemma}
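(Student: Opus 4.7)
The plan is to apply Lemma~\ref{lem:immediate} to obtain an $M$-ear $P$ of $G$ relative to an immediate lower-bound $H'$ of $H$ in $\poset{G}$, show that any such $H'$ is disjoint from $V(C)$, and then isolate the portion of $P$ around a vertex of $V(H)\cap V(P)$ between its two adjacent $S_0$-crossings. The existence of $H'$ follows because $H\notin\pmin{G}$: by the standing assumption of Section~\ref{sec:newproof:casewhole} every element of $\pmin{G}$ has vertex set in $S$, while $V(H)\subseteq V(C)\subseteq\complement{S}$. Lemma~\ref{lem:immediate} then supplies an $M$-ear $P$ in $G$ relative to $V(H')$ that traverses $H$.

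Next I would establish $V(H')\cap V(C)=\emptyset$. Since allowed edges of $G$ never cross $\delta_G(V(C))$ (because $V(C)$ is separating), each factor-component of $G$ is either contained in $V(C)$ or disjoint from it, so it suffices to rule out $V(H')\subseteq V(C)$. Assume this for contradiction and take a separating set $X$ of $G$ witnessing $H'\prec H$, so that $V(H')\cup V(H)\subseteq X$ and $G[X]/V(H')$ is factor-critical. Set $X':=X\cap V(C)$; then $X'$ is a union of factor-components of $C$, hence separating in $C$, and still contains $V(H')\cup V(H)$. Because allowed edges of $G$ never cross $\delta_G(V(C))$, every perfect matching of $G[X]/V(H')-w$ restricts cleanly to a perfect matching of $C[X']/V(H')-w$ for each vertex $w$, so $C[X']/V(H')$ is factor-critical and $H'\prec H$ in $\poset{C}$, contradicting $H\in\pmin{C}$.

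Given $V(H')\cap V(C)=\emptyset$, the ends of $P$ lie outside $V(C)$ while $P$ contains some $v\in V(H)\cap V(P)\subseteq V(C)$. Because $V(C)$ is a connected component of $G-S_0$, the maximal sub-segment of $P$ that contains $v$ and lies in $V(C)$ is bounded at both ends by edges of $\delta_G(V(C))$, each landing in $S_0$; call the adjacent $S_0$-vertices $u_{\mathrm{in}}$ and $u_{\mathrm{out}}$. The sub-path $u_{\mathrm{in}}\,P\,u_{\mathrm{out}}$ then has both endpoints in $S_0$, interior contained in $V(C)$ (and hence disjoint from $S_0$), and contains $v\in V(H)$. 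Since $S_0$ is separating, $\delta_G(S_0)\cap M=\emptyset$, so the two boundary edges at $u_{\mathrm{in}}$ and $u_{\mathrm{out}}$ both lie in $E(G)\setminus M$; combined with the $M$-alternation of $P$ this makes $u_{\mathrm{in}}\,P\,u_{\mathrm{out}}$ an $M$-exposed path, i.e., the desired $M$-ear of $G$ relative to $S_0$ traversing $H$. The main obstacle is the factor-criticality argument in the claim $V(H')\cap V(C)=\emptyset$; the remaining verifications are routine and reduce to $\delta_G(S_0)\cap M=\emptyset$ together with the fact that $V(C)$ is a connected component of $G-S_0$.
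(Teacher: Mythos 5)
You open as the paper does: since $V(H)\subseteq V(C)\subseteq\complement{S}$ while every element of $\pmin{G}$ has its vertex set in $S$, we have $H\notin\pmin{G}$, so an immediate lower bound $H'$ exists and Lemma~\ref{lem:immediate} supplies an $M$-ear $P$ relative to $H'$ traversing $H$. Your closing step is also sound \emph{once} $V(H')\cap V(C)=\emptyset$ is known: then both ends of $P$ lie outside $V(C)$, the maximal sub-segment of $P$ inside $V(C)$ through a vertex of $H$ is bounded on each side by an edge of $\delta_G(V(C))$ landing in $S_0$, such edges avoid $M$ because $S_0$ is separating, and the resulting sub-path is the desired $M$-ear relative to $S_0$ traversing $H$.

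The gap is in your proof that $V(H')\cap V(C)=\emptyset$. Assuming $V(H')\subseteq V(C)$, you take a separating set $X$ of $G$ witnessing $H'\paryield{G} H$, set $X':=X\cap V(C)$, and assert that $C[X']/V(H')$ is factor-critical on the grounds that ``allowed edges of $G$ never cross $\delta_G(V(C))$, so every perfect matching of $G[X]/V(H')-w$ restricts cleanly.'' This inference is not valid: the graph $G[X]/V(H')-w$ is not $G$, and its perfect matchings are free to use edges that are \emph{not} allowed in $G$ and that do cross $\delta_G(V(C))$. Concretely, let $G$ have factor-components $A=\{a_1,a_2\}$, $B=\{b_1,b_2\}$, $D=\{d_1,d_2\}$, $E=\{e_1,e_2\}$ (each a single allowed edge), together with the non-allowed edges $a_1b_1$, $b_2d_2$, $a_1d_1$, $e_1d_1$. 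Put $V(C)=V(A)\cup V(B)$ (separating) and $X=V(A)\cup V(B)\cup V(D)$ (separating). Then $G[X]/V(A)$ is the $5$-cycle $\star\,b_1\,b_2\,d_2\,d_1\,\star$, which is factor-critical, so $A\paryield{G} B$; yet $C[X\cap V(C)]/V(A)$ is the path $\star\,b_1\,b_2$, which is not factor-critical, and indeed $A\not\paryield{C} B$. So the $\paryield{G}$-witness does \emph{not} restrict to a $\paryield{C}$-witness. (This particular configuration is excluded by the paper's definition of $S_0$ as a maximal lower-ideal, but your argument never invokes that property, and therefore cannot rule it out.)

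The paper's own proof sidesteps factor-criticality entirely and works only with the $M$-ear $P$, crucially using the lower-ideal property of $S_0$: if $P$ traverses some factor-component lying in $S_0$, Lemma~\ref{lem:ear2comp} makes $H'$ a lower bound of it, and the lower-ideal property forces $V(H')\subseteq S_0$, contradicting the case assumption; otherwise $V(H')\cup V(P)\cup V(H)$ is a connected subset of $V(G)\setminus S_0$ meeting $V(C)$, hence lies inside $V(C)$, so $P$ is an $M$-ear of $C$ and Lemma~\ref{lem:ear2comp} applied in $C$ yields $H'\paryield{C} H$, contradicting $H\in\pmin{C}$. To repair your proof, replace the factor-criticality paragraph with this $M$-ear case split; the structural fact you are missing is precisely the lower-ideal property built into the definition of $S_0$.
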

\begin{proof} 
Because $H\not\in\pmin{G}$ holds here, 
$\poset{G}$ has an immediate lower-bound element $I\in\comp{G}$ of $H$. 
By Lemma~\ref{lem:immediate}, there is an $M$-ear $P$ relative to $I$ and traversing $H$.  
\begin{pclaim}\label{claim:min2ear:lb}
The vertex set of $I$ is contained in $S_0$. 
\end{pclaim} 
\begin{proof} 
Suppose this claim fails, i.e., suppose $V(I)\subseteq V(G)\setminus S_0$. 
If there exists $H'\in \comp{G}$ with $V(H')\subseteq S_0$ such that $P$ traverses $H'$, 
then $I\paryield{G} H'$ holds by Lemma~\ref{lem:ear2comp}, which contradicts the definition of $S_0$. 
Hence, $V(I)\cup V(P)\cup V(H)\subseteq V(G)\setminus S_0$ holds, 
and accordingly,  $V(I)\cup V(P)\cup V(H)\subseteq V(C)$ holds. 
This implies $I\paryield{C} H$, which contradicts $H\in\pmin{C}$. 
Hence, $V(I)\subseteq S_0$ follows. 
\end{proof} 
Under Claim~\ref{claim:min2ear:lb}, 
 the connected components of $P - E(G[S_0])$ are $M$-ears relative to $S_0$, and one of them, $P_H$, traverses $H$. 
\end{proof} 

Under Lemma~\ref{lem:min2ear}, 
for each $H\in\pmin{C}$, 
arbitrarily choose and fix  an $M$-ear relative to $S_0$ and traversing $H$;  
in the remainder of this paper, we denote it by $P_H$. 
\begin{lemma}\label{lem:path2ear}\label{claim:path2ear} 
Let $y\in V(C)$,  and let $H\in\pmin{C}$ be such that $y\in\vparupstar{C}{H}$.  
Then, there is an $M$-balanced path $Q_H^y$  from $y$ to $x_H$, one of the ends of the $M$-ear $P_H$, with $V(Q_H^y)\setminus \{x_H\} \subseteq V(C)$.
\end{lemma}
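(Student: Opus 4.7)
The plan is to build $Q_H^y$ by concatenating three $M$-alternating subpaths: an initial $M$-balanced path $R$ from $y$ to some $z\in V(H)$, an intermediate path $Q_2$ within $V(C)$ from $z$ to a suitable vertex $w\in V(P_H)\cap V(H)$, and the subpath $wP_Hx_H$ of $P_H$ from $w$ to a chosen end $x_H\in S_0$.

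Since $V(C)$ is a separating set of $G$, $M\cap E(C)$ is a perfect matching of the factorizable graph $C$, and $H\in\pmin{C}$. Choose $S'\in\pargpart{C}{H}$ with $y\in\vparupstar{C}{S'}$; Lemma~\ref{lem:tpath}\ref{item:tpath:up2base} applied in $C$ then yields an $M$-balanced path $R$ from $y$ to some $z\in S'$ with $V(R)\setminus\{z\}\subseteq\vparup{C}{S'}\subseteq V(C)$. Next, trace $P_H$ from one of its ends, $x_1\in S_0$, and let $w_1$ be the first vertex of $V(H)$ encountered on $P_H$ and $w_2$ its successor on $P_H$. Because $P_H$ is $M$-exposed and $\delta_G(V(H))\cap M=\emptyset$, the edge entering $w_1$ is not in $M$, forcing $w_1w_2\in M$ and hence $w_2\in V(H)$. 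Letting $S_i'\in\pargpart{C}{H}$ be the class of $w_i$, the fact that $M\cap E(H)\setminus\{w_1w_2\}$ is a perfect matching of $H-w_1-w_2$ gives $S_1'\neq S_2'$. The subpath $w_1 P_H x_1$ is $M$-exposed, while $w_2 P_H x_1$ is $M$-balanced from $w_2$ to $x_1$.

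If $S'\neq S_1'$, Lemma~\ref{lem:tpath}\ref{item:tpath:base2base} supplies an $M$-saturated path $Q_2$ between $z$ and $w_1$ contained in $\vparupstar{C}{H}\setminus\vparup{C}{S'}\setminus\vparup{C}{S_1'}\subseteq V(C)$, and a direct parity check shows that $R+Q_2+w_1 P_H x_1$ is $M$-balanced from $y$ to $x_H:=x_1$. Otherwise $S'=S_1'\neq S_2'$, so $w_2\in\vparcoup{C}{S'}$, and Lemma~\ref{lem:tpath}\ref{item:tpath:base2coup} supplies an $M$-balanced $Q_2$ from $z$ to $w_2$ contained in $\vparupstar{C}{H}\setminus\vparup{C}{S'}\subseteq V(C)$, whence $R+Q_2+w_2 P_H x_1$ is $M$-balanced from $y$ to $x_H:=x_1$. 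In either case all vertices of the concatenation other than $x_1$ lie in $V(C)$, as required.

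The principal obstacle is to guarantee that the constructed walk is a simple path, i.e., that the three pieces are pairwise vertex-disjoint except at their designated gluing points. When an undesired intersection occurs, I would trim at the first shared vertex and verify that the resulting short-cut produces either an $M$-alternating circuit through non-allowed edges of $\delta_G(V(H))$, contradicting Lemma~\ref{lem:circ2matching}, or an $M$-ear relative to $V(H)$ (or to a single vertex of $V(H)$) that would force $H$ to have a proper lower bound in $\poset{C}$, contradicting $H\in\pmin{C}$ by Lemma~\ref{lem:ear2comp}; this proceeds in the same spirit as Claims~\ref{claim:adjacent2arc:disjoint} and \ref{claim:minseq2arc:share1vertex}. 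A small degenerate subcase ($z=w_1$ in the second case) is bypassed by using $R+w_1 P_H x_2$ directly with $x_H:=x_2$, without needing $Q_2$ at all.
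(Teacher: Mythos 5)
Your plan is a genuine variant of the paper's proof. The paper decomposes $P_H$ along the edges internal to the tower $C[\vparupstar{C}{H}]$, argues via Lemma~\ref{lem:towerrelative} that the middle portion lies entirely inside that tower, deduces that the two exit points $z_1,z_2$ are tagged with distinct classes $T_1\neq T_2$ of $\pargpart{C}{H}$, picks the one distinct from the class $T_3$ of $y$, and then applies Lemma~\ref{lem:tpath}~\ref{item:tpath:up2up} once. You instead locate the first vertex $w_1$ of $V(H)$ along $P_H$ from an end $x_1$, use $\delta_G(V(H))\cap M=\emptyset$ to force the $M$-edge $w_1w_2\in E(H)$ and $S_1'\neq S_2'$, and then split into two cases according to whether $y$'s class equals $S_1'$, applying Lemma~\ref{lem:tpath}~\ref{item:tpath:base2base} or \ref{item:tpath:base2coup}. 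This is a different choice of ``transition vertex'' (a vertex of $V(H)$ rather than of $\vparupstar{C}{H}$) and a different reliance on Lemma~\ref{lem:tpath}; the local alternation bookkeeping in both of your cases is correct, and the case split cleanly replaces the paper's $T_1\neq T_2$ argument.

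Where the proposal falls short is exactly where you flag it yourself: the vertex-disjointness of the three pieces is not established, and this is not a routine afterthought here. Your $R$ and $Q_2$ are indeed disjoint apart from $z$ by the explicit containments given in Lemma~\ref{lem:tpath}, but the outer segment $w_1P_Hx_1$ (or $w_2P_Hx_1$) is only known to avoid $V(H)$ internally; nothing you have said prevents it from re-entering $\vparupstar{C}{H}$ and meeting $Q_2$, or even meeting $\vparup{C}{S'}$ and hence $R$. The paper avoids this precisely by decomposing $P_H$ at the tower boundary rather than at $V(H)$ and invoking Lemma~\ref{lem:towerrelative} to confine the middle part of $P_H$ inside $\vparupstar{C}{H}$, so that the piece glued on from $P_H$ is the whole maximal outer segment $R_1$ and the inner path $L$ never has to be argued disjoint from a piece that might sneak back in. Your proposed remedy --- trim at the first shared vertex and derive an alternating circuit through a non-allowed edge of $\delta_G(V(H))$, or an $M$-ear contradicting $H\in\pmin{C}$ --- is plausibly salvageable in the spirit of Claims~\ref{claim:adjacent2arc:disjoint} and \ref{claim:minseq2arc:share1vertex}, but you would need to carry it out explicitly: in particular you must check the parity at the collision vertex in each of your two cases (and in the degenerate $z=w_1$ subcase), and verify that the closed walk you obtain actually contains an edge of $\delta_G(V(H))$; this is not automatic once $p$ may lie in $\vparup{C}{H}$ rather than $V(H)$. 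Until that is done, this step is a genuine gap rather than a deferred routine detail.
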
 

\begin{proof}
Consider the  possibly identical connected components $R_1$ and $R_2$ of $P_H-E(C[\vparupstar{C}{H}])$ that contain the ends of $P_H$; if $P_H$ is proper, let us denote it by $R_1$, and let $R_2$ be an empty graph.   
Let $z_1, z_2\in \vparupstar{C}{H}$ be the ends of $R_1$ and $R_2$ that are distinct from the ends of $P_H$. 
Let $R' := P_H - E(R_1 + R_2)$.  
We have $E(R')\subseteq E(C[\vparupstar{C}{H}])$; 
otherwise, the connected components of $R'-E(C[\vparupstar{C}{H}])$  are non-trivial $M$-ears relative to $\vparupstar{C}{H}$ 
or are trivial $M$-ears that contradict Lemma~\ref{lem:towerrelative}.  
Accordingly, 
$T_1\neq T_2$ holds; 
otherwise, the connected components of $R'-\vparup{C}{T_1}$ are $M$-saturated paths whose ends are all in $T_1$, which contradicts Lemma~\ref{lem:delete2path}. 

Let $T_3\in\pargpart{C}{H}$ be such with $y\in \vparupstar{C}{T_3}$. 
Either $T_1$ or $T_2$ is not identical to $T_3$; without loss of generality, let $T_1\neq T_3$. 
Let $x_H$ be the end of $P_H$ that is in $V(R_1)$.  
By Lemma~\ref{lem:tpath} \ref{item:tpath:up2up}, there is an $M$-saturated path $L$ between $y$ and $z_1$ 
with $V(L)\subseteq \vparupstar{C}{H}$.   
The path $L + z_1R_1x_H$ is a desired path $Q_H^y$.  
\end{proof} 

Following the above, 
in the remainder of this paper, for each $H\in\pmin{C}$ and each $y\in\vparupstar{C}{H}$, 
let $Q_H^y$ be the path as given in Lemma~\ref{lem:path2ear}, 
and let $x_H$ be the end of the $M$-ear $P_H$ that is also an end of the path $Q_H^y$. 

\subsubsection{Case with $|\parNei{G}{C}\cap S_0| = 1$} \label{sec:newproof:casewhole:cut}
Here in Section~\ref{sec:newproof:casewhole:cut}, 
we assume that there exists $x_0\in S_0$ with $\parNei{G}{C}\cap S_0 = \{x_0\}$, and 
prove the Tight Cut Lemma under this assumption. 
In this case, of course $P_H$ is a non-proper $M$-ear with the unique end $x_H$, 
which is accordingly equal to $x_0$ for each $H\in\pmin{C}$. 
\begin{lemma}\label{lem:cut2matching} 
If $|\parNei{G}{C}\cap S_0 | = 1 $ holds,  
then $\hat{G}$ has a $\hat{S}$-fat perfect matching. 
\end{lemma}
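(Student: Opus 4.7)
The plan is to exhibit an $\hat{M}$-alternating circuit $D$ in $\hat{G}$ that passes through $uv$ and contains at least four edges of $\delta_{\hat{G}}(\hat{S})$; once such $D$ is in hand, $\hat{M}\triangle E(D)$ is a perfect matching with at least three cut edges and hence is $\hat{S}$-fat. Four (not just two) cut edges are required by the parity observation that $|\hat{M}'\cap \delta_{\hat{G}}(\hat{S})|$ has the same parity as $|\hat{S}|$, and $|\hat{S}|$ is odd because $|\hat{M}\cap\delta_{\hat{G}}(\hat{S})|=1$; moreover, any $\hat{M}$-alternating circuit avoiding $uv$ lies entirely within $G$, where $\delta_G(S)$ carries no allowed edge (since $S$ is separating), so such a circuit contributes nothing to the cut. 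Hence the desired $D$ must include $uv$.

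For the construction, $x_0$ is a cut vertex of $G$ by the hypothesis $\parNei{G}{C}\cap S_0=\{x_0\}$, so Lemma~\ref{lem:cut2nei} supplies $u'\in \parNei{\hat{G}}{u}\cap V(C)$. The brick property of $\hat{G}$ gives a perfect matching $N$ of $\hat{G}-u-x_0$, and since $u,x_0$ are the only vertices exposed by $N$, the symmetric difference $\hat{M}\triangle N$ decomposes into an $\hat{M}$-saturated path $R$ from $u$ to $x_0$ plus some $\hat{M}$-alternating circuits. Because $uv$ is the unique $\hat{M}$-edge at $u$ and $x_0z_0$ (with $z_0\in S_0$ the $M$-mate of $x_0$) is the unique $\hat{M}$-edge at $x_0$, the path $R$ begins with $uv$ and terminates at $z_0x_0$. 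Taking $H\in\pmin{C}$ with $u'\in\vparupstar{C}{H}$, I let $Q:=Q_H^{u'}$ be the $M$-balanced path from $u'$ to $x_0$ furnished by Lemma~\ref{lem:path2ear}; its interior lies in $V(C)$. Granting the near-disjointness $V(R)\cap V(Q)=\{x_0\}$, $u\notin V(Q)$, and $u'\notin V(R)$, the graph $D:=R+Q+uu'$ is a simple circuit tracing $u\to v\to\cdots\to z_0\to x_0\to\cdots\to u'\to u$, whose alternation is routine to verify at each of $u,v,x_0,u'$ and at the interior vertices of $R$ and $Q$.

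To count cut crossings, $R$ goes from $u\in\hat{S}$ through $v\in\complement{\hat{S}}$ back to $x_0\in\hat{S}$, so $|E(R)\cap\delta_{\hat{G}}(\hat{S})|\ge 2$. The complementary arc $u\to u'\to\cdots\to x_0$ likewise begins and ends in $\hat{S}$, and a sub-case analysis shows it contributes at least two further crossings: if $u'\in\complement{S}$ then $uu'$ is a cut edge and $Q$ must also cross $\delta_G(S)$ (oddly-many times) to reach $x_0\in S$; if $u'\in S$ then $Q$, which by Lemma~\ref{lem:path2ear} incorporates a segment of the ear $P_H$ reaching into $V(H)\subseteq\complement{S}$, must cross $\delta_G(S)$ at least twice. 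Hence $|E(D)\cap\delta_{\hat{G}}(\hat{S})|\ge 4$ in all cases. The main obstacle is establishing the near-disjointness of $R$ and $Q$: a careless choice of $N$ might let $R$ wander into $V(C)$ and intersect $Q$ nontrivially. Since $C$ is attached to $V(\hat{G})\setminus V(C)$ only through $x_0,u,v$, one should be able to adjust $N$ (for instance by XORing it with an alternating circuit confined to $V(C)$) so that $R\subseteq(V(\hat{G})\setminus V(C))\cup\{x_0\}$; otherwise one splices $D$ at the first shared vertex, or runs the construction symmetrically with $v$ in place of $u$. A similarly delicate point is the sub-case $u'\in S$ in the cut-edge count, where one may need to choose $H$ carefully to ensure that $Q_H^{u'}$ actually reaches $V(H)$.
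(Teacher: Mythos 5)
Your overall plan---build an $\hat{M}$-alternating circuit through $uv$ that crosses $\delta_{\hat{G}}(\hat{S})$ at enough non-matching edges, and XOR with $\hat{M}$---is the same as the paper's. But you leave a genuine gap exactly where you flag it: the near-disjointness of $R$ and $Q$. Your $R$ is an $\hat{M}$-saturated $u$--$x_0$ path in $\hat{G}$, obtained from a perfect matching of $\hat{G}-u-x_0$; nothing forces it to stay out of $V(C)$, and indeed after the first step $u\to v$ it may re-enter $V(C)$ through any neighbor of $v$ there. Your proposed fixes are not arguments: XORing $N$ with an alternating circuit confined to $V(C)$ can change which vertices of $V(C)$ the path visits but does not show it can be made to avoid all of them, and ``splicing at the first shared vertex'' generally destroys alternation (as you can see from the case analyses needed in Claims~\ref{claim:minseq2arc:share1vertex} and \ref{claim:arc2path} elsewhere in the paper). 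The paper sidesteps this entirely by building $R$ on the \emph{other} side: applying Lemma~\ref{lem:cut2saturated} at the cut vertex $x_0$ yields a vertex $z\in\parNei{\hat{G}}{v}$ and an $M$-saturated $x_0$--$z$ path $R$ with $V(R)\subseteq S_0$. Since $V(Q_H^y)\setminus\{x_0\}\subseteq V(C)$ and $S_0\cap V(C)=\emptyset$, the two paths meet only at $x_0$ by construction, and the circuit $K := R + zv + vu + uy + Q_H^y$ (with $y\in\parNei{\hat{G}}{u}\cap V(C)$ from Lemma~\ref{lem:cut2nei}) closes up with no further verification needed.

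Two smaller points. First, the parity preamble is unnecessary overhead: since $uv\in\hat{M}\cap E(K)$ and $K$ contains at least two cut edges outside $\hat{M}$, the XOR already has at least two edges in $\delta_{\hat{G}}(\hat{S})$, which is all ``$\hat{S}$-fat'' requires; you do not need to reach four edges in the circuit. Second, your worry about the sub-case $u'\in S$ is in fact moot once you use the concrete construction of $Q_H^{u'}$ from Lemma~\ref{lem:path2ear}: that path necessarily passes through the vertex $z_1$, which lies in $T_1\in\pargpart{C}{H}$ and hence in $V(H)\subseteq\complement{S}$, so $Q_H^{u'}$ always visits $\complement{S}$ while its ends are in $S$, yielding at least two crossings with no extra care in the choice of $H$.
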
 

\begin{proof}
Note that  $x_0$ is a cut vertex of $G$ such that 
$C$ is a connected component of $G-x_0$. 
Therefore, by Lemma~\ref{lem:cut2saturated}, 
there exists $z\in\parNei{\hat{G}}{v}$ such that there is an $M$-saturated path $R$ between $x_0$ and $z$ with $V(R)\subseteq S_0$. 
Note $zv\in E_{\hat{G}}[\hat{S}, \complement{\hat{S}}]\setminus \hat{M}$.

By Lemma~\ref{lem:cut2nei}, there exists $y\in V(C)\cap \parNei{\hat{G}}{u}$. 
Let $H\in\pmin{C}$ be such with $y\in \vparupstar{C}{H}$, 
and take a path $Q_H^y$ as given in Lemma~\ref{lem:path2ear}. 
Let $K := R + zv + uv + vy + Q_H^y$. Note that $K$ is an $\hat{M}$-alternating circuit of $\hat{G}$.

If $y\in \complement{S}$ holds, then $yu\in E_{\hat{G}}[\hat{S}, \complement{\hat{S}}]\setminus \hat{M}$ holds. 
Otherwise, if $y\in S$ holds, 
then $Q_H^y$ has edges in $E_G[S, \complement{S}]$ and, accordingly, in $E_{\hat{G}}[\hat{S}, \complement{\hat{S}}]\setminus \hat{M}$; 
this is because Lemma~\ref{lem:path2ear} assures that $Q_H^y$ traverses $\complement{S}$ 
while the ends of $Q_H^y$ are in $S$. 
Therefore, in each case, $K$ has at least two edges in $E_{\hat{G}}[\hat{S}, \complement{\hat{S}}]\setminus \hat{M}$. 
Hence, by Lemma~\ref{lem:circ2matching}, 
 $\hat{M}\triangle E(K)$ is a $\hat{S}$-fat perfect matching. 

\end{proof} 

From Lemma~\ref{lem:cut2matching}, 
the proof of the Tight Cut Lemma for the case analysis of Section~\ref{sec:newproof:casewhole:cut} is completed.  
\subsubsection{Case with $|\parNei{G}{C}\cap S_0| > 1$}\label{sec:newproof:casewhole:proper} 
Here, in Section~\ref{sec:newproof:casewhole:proper}, 
we treat the counterpart case to Section~\ref{sec:newproof:casewhole:cut}; 
namely, we assume $|\parNei{G}{C}\cap S_0| > 1$. 
We use the next lemma as the main strategy to obtain a desired perfect matching: 
\begin{lemma}\label{lem:proper2matching}
If $G$ has a proper $M$-ear relative to $S_0$ and traversing $\complement{S}$, 
then $\hat{G}$ has an $\hat{S}$-fat perfect matching. 
\end{lemma}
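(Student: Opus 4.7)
My plan is to enclose $P$ into an $\hat{M}$-alternating circuit $C$ of $\hat{G}$ that carries at least two edges of $\delta_{\hat{G}}(\hat{S}) \setminus \hat{M}$; then Lemma~\ref{lem:circ2matching} yields $\hat{M} \triangle E(C)$ as the required $\hat{S}$-fat perfect matching. The circuit will use the edge $uv$ as an inner bridge, in the same spirit as the circuit $K$ built in the proof of Lemma~\ref{lem:cut2matching}.

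Let $a, b$ be the two distinct ends of $P$, so $a, b \in S_0 \subseteq \hat{S}$. Since $u, v \notin V(G)$ and $V(P) \subseteq V(G)$, the path $P$ is $\hat{M}$-exposed between $a$ and $b$ in $\hat{G}$; and because $P$ traverses $\complement{S}$ while both its ends lie in $S$, $P$ contains at least two edges of $\delta_{G}(S)$. As $S$ is separating in $G$ these edges are not in $M$, and since they are not the edge $uv$, they belong to $\delta_{\hat{G}}(\hat{S}) \setminus \hat{M}$.

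Next, I use the brick property of $\hat{G}$ to attach $a$ and $b$ to $u$ and $v$. Because $\hat{G} - a - v$ is factorizable, Lemma~\ref{lem:delete2path} supplies an $\hat{M}$-saturated path in $\hat{G}$ between $a$ and $v$; the only $\hat{M}$-edge incident to $v$ is $uv$, so this path ends with $uv$, and deleting that last edge leaves an $\hat{M}$-balanced path $Q_1$ from $a$ to $u$ with $u$ the unmatched end. Symmetrically, from $\hat{G} - b - u$ I obtain an $\hat{M}$-balanced path $Q_2$ from $b$ to $v$ with $v$ unmatched. I let $C$ be the closed walk that traverses $P$ from $a$ to $b$, then $Q_2$ from $b$ to $v$, then the edge $uv$, and finally the reverse of $Q_1$ from $u$ back to $a$. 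A direct vertex-by-vertex check shows $C$ is $\hat{M}$-alternating: the non-$\hat{M}$ edges of $P$ at $a, b$ meet the $\hat{M}$ first edges of $Q_1, Q_2$; the non-$\hat{M}$ edges of $Q_1, Q_2$ at their unmatched ends $u, v$ meet the $\hat{M}$-edge $uv$; and each of $P, Q_1, Q_2$ alternates internally by construction. Because the at-least-two $\delta$-crossings of $P$ persist in $C$, the circuit $C$ carries at least two edges of $\delta_{\hat{G}}(\hat{S}) \setminus \hat{M}$, and Lemma~\ref{lem:circ2matching} then finishes the argument.

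The main obstacle will be to ensure that $C$ is a genuine simple circuit, i.e., that $P, Q_1, Q_2$ are pairwise internally disjoint and touch $\{u, v\}$ only as intended. The internal vertices of $P$ lie in $V(G) \setminus S_0$, while those of $Q_1, Q_2$ can in principle wander anywhere in $\hat{G}$, so such collisions cannot be ruled out by construction alone. I would handle this by choosing $Q_1, Q_2$ to minimize $|V(Q_1) \cap (V(P) \setminus \{a, b\})|$, $|V(Q_2) \cap (V(P) \setminus \{a, b\})|$, and $|V(Q_1) \cap V(Q_2)|$, then running a rerouting/parity argument at any first alleged shared interior vertex $x$. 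The crucial observation is that, since $P$ is $M$-exposed while each $Q_i$ is $\hat{M}$-balanced, at any interior shared vertex $x$ both paths must carry the unique $\hat{M}$-edge of $x$; this lets one splice $Q_i$ along $P$ past $x$ while preserving alternation and strictly reducing the intersection, contradicting minimality, except in a parity-borderline case in which a strictly smaller $\hat{M}$-alternating sub-circuit of $P + Q_2 + uv + Q_1^{-1}$ still carrying the two $\delta$-crossings of $P$ can be extracted instead. This parity/rerouting bookkeeping at shared vertices is the crux of the proof.
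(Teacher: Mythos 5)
Your setup is sound and your circuit template $C = P + Q_2 + uv + Q_1^{-1}$ does alternate correctly if the pieces are internally disjoint, and you correctly identify disjointness as the crux. But your treatment of that crux is a genuine gap. You build $Q_1$ and $Q_2$ from two \emph{separate} applications of Lemma~\ref{lem:delete2path} to $\hat G-a-v$ and $\hat G-b-u$, so $Q_1$ and $Q_2$ come with no a priori relation to each other or to $P$, and your proposed ``minimize intersections then splice'' argument is not carried out: when you splice $Q_i$ along a stretch of $P$ you do not explain why the result remains $\hat M$-balanced with the correct exposed end, nor do you analyze the ``parity-borderline case'' you acknowledge can arise. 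More importantly, the observation that collisions carry the same unique $\hat M$-edge is not by itself enough; the actual contradiction one needs is structural and comes from the way $S_0$ was chosen, and your argument never touches that.

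The paper sidesteps your difficulty by taking a \emph{single} $\hat M$-saturated path $Q$ between the two ends $x,y$ of $P$ (applying Lemma~\ref{lem:delete2path} once, to $\hat G-x-y$). Since $P+Q$ cannot be a circuit of $G$ (Lemma~\ref{lem:circ2matching} would force edges of $\delta_G(S_0)$ to be allowed), $Q$ must pass through $uv$, and then $Q_1, Q_2$ are just the two components of $Q-u-v$: they are automatically disjoint from each other, which eliminates one of your three intersection terms for free. For the remaining disjointness of $Q_i$ from $P$, the paper does a parity case analysis at the first collision vertex $z$: the even case gives a forbidden $M$-alternating circuit (again Lemma~\ref{lem:circ2matching}), and the odd case gives an $M$-ear relative to $\{z\}$, which via Lemma~\ref{lem:vear2comp} produces a factor-component outside $S_0$ lying below one inside $S_0$ --- contradicting the definition of $S_0$ as a maximal lower-ideal of the poset. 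That last step is the lever your argument is missing: the lower-ideal property of $S_0$ is exactly what rules out the ``bad'' collisions, and no amount of rerouting bookkeeping substitutes for it.
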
 

\begin{proof}
Let $P$ be a proper $M$-ear relative to $S_0$ and traversing $\complement{S}$, 
and let $x$ and $y$ be the ends of $P$, with $x\neq y$. 
As $\hat{G}$ is a brick, Lemma~\ref{lem:delete2path} implies that 
it has an  $\hat{M}$-saturated path $Q$ between $x$ and $y$.  
This $Q$ is not a path in $G$, otherwise $P+Q$ is an $M$-alternating circuit of $G$ containing non-allowed edges in $\delta_G(S_0)$, which is a contradiction by Lemma~\ref{lem:circ2matching}. Hence, $uv\in E(Q)$ holds. 
Let $Q_1$ and $Q_2$ be the connected components of $Q-u-v$; 
note that they are $M$-saturated paths. 
\begin{pclaim}\label{claim:proper2matching:disjoint} 
The paths $Q_1$ and $Q_2$ are disjoint from $P$, except for the ends $x$ and $y$. 
\end{pclaim}
\begin{proof} 
Without loss of generality, let $x$ be one of the ends of $Q_1$. 
Suppose the claim fails, 
and let $z$ be the first encountered vertex in $P-x$ if we trace $Q_1$ from $x$.

If $xPz$ has an even number of edges, then $xQ_1z + xPz$ is an $M$-alternating circuit of $G$ containing non-allowed edges in $\delta_G(S_0)$. This contradicts Lemma~\ref{lem:circ2matching}. 
Otherwise, if  $xPz$ has an odd number of edges, then $xQ_1z + xPz$ is an $M$-ear relative to $z$. 
From Lemma~\ref{lem:vear2comp}, this implies that there exist $H_z\in\comp{G}$ with $V(H_z)\subseteq V(G)\setminus S_0$ and $H_0\in\comp{G}$ with $V(H_0)\subseteq S_0$ such that $H_z\paryield{G} H_0$ holds.   
This contradicts the definition of $S_0$. 
Hence, the statement is obtained for $Q_1$.  
With the symmetrical argument, the statement also holds for $Q_2$ 
\end{proof}
By Claim~\ref{claim:proper2matching:disjoint}, 
$P+Q$ forms an $M$-alternating circuit, 
and it has at least two edges in $E_G[S,\complement{S}]$, 
because $P$ has. 
Hence, $\hat{M}\triangle E(P+Q)$ is a desired $\hat{S}$-perfect matching.  
\end{proof} 

As given Lemma~\ref{lem:proper2matching}, we aim at finding such a proper $M$-ear. 
If the $M$-ear $P_H$  is proper for some $H\in\pmin{C}$, 
then Lemma~\ref{lem:proper2matching} gives a $\hat{S}$-fat matching of $\hat{G}$. 
Hence, in the remainder of this proof, we assume that 
\begin{quote}
$P_H$ is not proper, having the unique end $x_H$, 
for each $H\in\pmin{C}$.  
\end{quote}
The next two lemmas find desired $M$-ears and therefore $\hat{S}$-fat perfect matchings 
under the assumptions that are the counterparts to each other. 
\begin{lemma}\label{lem:multiNei}\label{claim:multiNei}
Let $H\in\pmin{C}$. 
If $\parNei{G}{\vparupstar{C}{H}}\cap S_0$ contains a vertex other than $x_H$,  
then $\hat{G}$ has a $\hat{S}$-fat matching. 
\end{lemma}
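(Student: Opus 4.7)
The plan is to invoke Lemma~\ref{lem:proper2matching} by producing a proper $M$-ear relative to $S_0$ that traverses $\complement{S}$. By the hypothesis, I fix an edge $x'y \in E(G)$ with $x' \in S_0 \setminus \{x_H\}$ and $y \in \vparupstar{C}{H}$. Because $S_0$ is a separating set of $G$, the edge $x'y$ is not in $M$.

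The natural candidate is $P^* := x'y + Q_H^y$, where $Q_H^y$ is the $M$-balanced path from $y$ to $x_H$ supplied by Lemma~\ref{lem:path2ear}. First I would verify that $P^*$ is a proper $M$-ear relative to $S_0$: its two ends $x'$ and $x_H$ are distinct elements of $S_0$; its interior is contained in $V(C) \subseteq V(G) \setminus S_0$; and $P^*$ is $M$-exposed, since $x'y \notin M$, the first edge of $Q_H^y$ at $y$ lies in $M$ (as $y$ is the matched end of the balanced path from $y$ to $x_H$), alternation is thereby preserved at $y$, and the final edge at $x_H$ is not in $M$ (as $x_H$ is the unmatched end).

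The main obstacle is to show that $P^*$ traverses $\complement{S}$, namely that it has an internal vertex in $\complement{S}$. Since $V(C)$ may meet $S \setminus S_0$, this is not automatic from the interior lying in $V(C)$. The leverage is that $H \in \pmin{C}$ satisfies $V(H) \subseteq \complement{S}$ and that the fixed $M$-ear $P_H$ traverses $H$; moreover, the construction of $Q_H^y$ in Lemma~\ref{lem:path2ear} threads through $\vparupstar{C}{H}$ via a portion $R_1$ of $P_H$. I would argue that the boundary vertex $z_1 \in \vparupstar{C}{H}$ appearing in that construction can be taken inside $V(H)$ by exploiting the fact that $P_H$ enters $V(H)$; failing that, one inserts a short detour from $y$ (or from a vertex of $R_1$) into $V(H)$ by invoking Lemma~\ref{lem:tpath}, and then prunes the resulting $M$-alternating walk to a simple $M$-exposed subpath, with Lemma~\ref{lem:vear2comp} together with the minimality of $H$ in $\pmin{C}$ and the maximality of $S_0$ ruling out the shortcuts that would disrupt alternation or force a lower bound of $H$ outside $S_0$.

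Once a proper $M$-ear relative to $S_0$ that traverses $\complement{S}$ is in hand, Lemma~\ref{lem:proper2matching} immediately yields a $\hat{S}$-fat perfect matching of $\hat{G}$, completing the proof.
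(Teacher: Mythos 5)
Your construction is exactly the paper's: with $z:=x'$, the candidate ear $P^*:=x'y+Q_H^y$ is the ear $zy + Q_H^y$ that the paper builds and hands directly to Lemma~\ref{lem:proper2matching}. Your checks that $P^*$ is a proper $M$-ear relative to $S_0$ are all correct: the ends $x',x_H$ are distinct vertices of $S_0$, they are the only vertices of $P^*$ in $S_0$ (since $V(Q_H^y)\setminus\{x_H\}\subseteq V(C)$ and $V(C)\cap S_0=\emptyset$, so in particular $x'\notin V(Q_H^y)$ and $P^*$ is a genuine simple path), and the alternation at $y$ and at $x_H$ is as you say.

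The one place you do not actually close the argument is the traversal of $\complement{S}$, and that is indeed the only nontrivial point. You are right that it is \emph{not} immediate from the statement of Lemma~\ref{lem:path2ear}, since $V(C)$ may well intersect $S\setminus S_0$; the paper itself leans on this silently (both here and again in the proof of Lemma~\ref{lem:cut2matching}, where it asserts that ``Lemma~\ref{lem:path2ear} assures that $Q_H^y$ traverses $\complement{S}$'' even though the lemma's statement says no such thing). But your proposed repair --- re-choosing $z_1$ inside $V(H)$, or splicing a detour into $V(H)$ and then ``pruning to a simple $M$-exposed subpath'' --- is not a proof: after splicing you have a closed or self-intersecting alternating walk, and there is no general operation that prunes such a walk to a simple $M$-exposed path with the same ends while preserving alternation; the invocations of Lemma~\ref{lem:vear2comp} and maximality of $S_0$ are gestured at, not carried out. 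The clean way to close the gap is to look at how $Q_H^y$ is actually built in the proof of Lemma~\ref{lem:path2ear}: it is $L + z_1R_1x_H$, where $L$ is an $M$-saturated path obtained from Lemma~\ref{lem:tpath}~\ref{item:tpath:up2up} joining $y\in\vparupstar{C}{T_3}$ to $z_1\in\vparupstar{C}{T_1}$ with $T_1\neq T_3$ and $V(L)\subseteq\vparupstar{C}{H}$. By Theorem~\ref{thm:cor}, distinct classes $\vparup{C}{T_1}$ and $\vparup{C}{T_3}$ lie in distinct connected components of $C[\vparup{C}{H}]$, so any path inside $\tower{H}$ between $\vparupstar{C}{T_3}$ and $\vparupstar{C}{T_1}$ must pass through $V(H)$. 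Hence $Q_H^y$ has a vertex of $V(H)\subseteq\complement{S}$, and since $x_H\notin V(H)$ that vertex is internal to $P^*$; so $P^*$ traverses $\complement{S}$ and Lemma~\ref{lem:proper2matching} applies. With that one observation supplied in place of the detour-and-prune sketch, your proof coincides with the paper's.
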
 

\begin{proof}
Let $z$ be a vertex in $\parNei{G}{\vparupstar{C}{H}}\cap S_0$ that is distinct from $x_H$, 
and let $y \in \vparupstar{C}{H}$ be such with $zy \in E(G)$. 
Take a path $Q_H^y$ as in Lemma~\ref{lem:path2ear}.  
Then $Q_H^y + zy$ is an $M$-ear relative to $S_0$, with the two distinct  vertices $x_H$ and $y$, 
and traversing $V(H)\subseteq \complement{S}$. 
Hence, by Lemma~\ref{lem:proper2matching}, this lemma is now proven. 
\end{proof}  

As the counterpart of Lemma~\ref{lem:cut2matching}, 
the next lemma treats the case where $\parNei{G}{\vparupstar{C}{H}}\cap S_0 = \{x_H\}$ for any $H\in\pmin{C}$. 
Note that according to the assumption of Section~\ref{sec:newproof:casewhole:proper}, 
there exist $H, I\in\pmin{C}$ with $x_H\neq x_I$. 
\begin{lemma}\label{lem:multi2ear} 
If $\parNei{G}{\vparupstar{C}{H}}\cap S_0 = \{x_H\}$ holds for any $H\in\pmin{C}$, 
then $\hat{G}$ has a $\hat{S}$-fat perfect matching. 
\end{lemma}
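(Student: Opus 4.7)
The plan is to exhibit a proper $M$-ear of $G$ relative to $S_0$ that traverses $\complement{S}$ and then invoke Lemma~\ref{lem:proper2matching}. Because $|\parNei{G}{C}\cap S_0|>1$ and the hypothesis forces $\parNei{G}{\vparupstar{C}{H'}}\cap S_0=\{x_{H'}\}$ for every $H'\in\pmin{C}$, I can fix $H,I\in\pmin{C}$ with $x_H\neq x_I$; the two distinct vertices $x_H,x_I\in S_0$ will serve as the two ends of the required proper ear.

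To build the ear I first produce a bridge inside $C$ linking the two towers. Since $V(C)=\bigcup_{H'\in\pmin{C}}\vparupstar{C}{H'}$ and $C$ is connected, the t-adjacency graph on $\pmin{C}$ is connected, so by Lemma~\ref{lem:seq2span} and Lemma~\ref{lem:minseq2arc} applied inside $C$ (with respect to the restriction of $M$ to $E(C)$, which is a perfect matching of $C$), I obtain a tower-sequence $H=H_1,\ldots,H_r=I$ in $\pmin{C}$ together with an accompanying $M$-arc $L$ of $C$ whose ends $y\in S_H\in\pargpart{C}{H}$ and $z\in S_I\in\pargpart{C}{I}$ lie in appropriate ports. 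Invoking Lemma~\ref{lem:path2ear} then supplies $M$-balanced paths $Q_H^y$ from $y$ to $x_H$ and $Q_I^z$ from $z$ to $x_I$, whose interiors lie in $V(C)$. The concatenation $P:=Q_H^y+L+Q_I^z$ is forced to be $M$-exposed from $x_H$ to $x_I$: at $y$, the balanced path $Q_H^y$ contributes an $M$-edge (its saturated end) while the exposed arc $L$ contributes a non-$M$ edge, and symmetrically at $z$; both outer ends at $x_H,x_I$ carry non-$M$ edges because they are the free ends of $Q_H^y,Q_I^z$. Since the interior of $P$ is in $V(C)\subseteq V(G)\setminus S_0$ and $P$ contains $y\in V(H)\subseteq\complement{S}$ as an interior vertex, $P$ is a proper $M$-ear relative to $S_0$ that traverses $\complement{S}$, as required.

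The main obstacle will be ensuring that $Q_H^y$, $L$, and $Q_I^z$ pairwise meet only at the planned glue vertices $y$ and $z$, so that $P$ is actually a simple path (otherwise the alternation of $M$ and non-$M$ edges can break down at an unintended crossing, and $P$ may fail to be $M$-exposed). I plan to address this in the spirit of Claim~\ref{claim:adjacent2arc:disjoint} and Claim~\ref{claim:minseq2arc:share1vertex}: any unplanned intersection would yield either an $M$-alternating circuit containing non-allowed edges of $\delta_G(S_0)$, contradicting Lemma~\ref{lem:circ2matching} since $\delta_G(S_0)\cap M=\emptyset$, or an $M$-ear relative to one of the minimal factor-components $H,I$ or to a tower in $C$, contradicting Lemma~\ref{lem:ear2comp}, Lemma~\ref{lem:towerrelative}, or the hypothesis $\parNei{G}{\vparupstar{C}{H'}}\cap S_0=\{x_{H'}\}$ itself.
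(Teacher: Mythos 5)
Your overall plan — build a proper $M$-ear relative to $S_0$ that traverses $\complement{S}$ and hand it to Lemma~\ref{lem:proper2matching} — is exactly the paper's strategy, and your observation that $|\parNei{G}{C}\cap S_0|>1$ plus the case hypothesis force two minimal factor-components with distinct $x$-values is correct. The gap is in how you obtain the bridge $L$ inside $C$. You fix arbitrary $H,I\in\pmin{C}$ with $x_H\neq x_I$ and then claim that connectivity of the t-adjacency graph together with Lemma~\ref{lem:seq2span} and Lemma~\ref{lem:minseq2arc} produces a tower-sequence from $H$ to $I$. Neither lemma does this: Lemma~\ref{lem:seq2span} only extends a \emph{given} tower-sequence to a spanning one (its endpoints become borders, not your prescribed $H$ and $I$), and Lemma~\ref{lem:minseq2arc} only extracts properties of a tower-sequence already in hand. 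More fundamentally, a path in the t-adjacency graph is not automatically a tower-sequence, because at each interior $H_i$ the definition requires you to leave through a \emph{different} port than you entered ($S_i^+\neq S_i^-$); if $\tower{H_{i-1}}$ and $\tower{H_{i+1}}$ both meet $\tower{H_i}$ through the same port class, the walk cannot be upgraded. So the existence of a tower-sequence from a prescribed $H$ to a prescribed $I$ is not established.

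The paper sidesteps this entirely. Since the t-adjacency graph on $\pmin{C}$ is connected and not all $x_{H'}$ are equal, somewhere along a t-adjacency path two \emph{consecutive} towers must already have distinct $x$-values; so one can take $H_1,H_2\in\pmin{C}$ that are \emph{directly} t-adjacent with $x_{H_1}\neq x_{H_2}$ and simply invoke Lemma~\ref{lem:adjacent2arc}. This buys two things at once: no tower-sequence existence issue arises, and the resulting $M$-arc $R$ has its interior confined to $\vparup{C}{S_1}\cup\vparup{C}{S_2}$, which is what makes the edges of $R$ incident to $s_1,s_2$ non-allowed edges of $\delta_C(S_i)$ — the fact the disjointness claims (Claims~\ref{claim:arc2path} and \ref{claim:path2path}) pivot on. If you redo your argument by first locating such a t-adjacent pair, the rest of your sketch (gluing with $Q_H^y$, $Q_I^z$ via Lemma~\ref{lem:path2ear} and arguing disjointness through Lemma~\ref{lem:circ2matching} and Lemma~\ref{lem:vear2comp}) aligns with the paper's proof.
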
 

\begin{proof}
As $C$ is connected, 
there exist $H_1, H_2\in\pmin{C}$ with $x_{H_1}\neq x_{H_2}$ such that $\partower{C}{H_1}$ and $\partower{C}{H_2}$ are t-adjacent. 
Let $S_i\in\pargpart{C}{H_i}$ be the ports of this adjacency for $i\in\{1,2\}$. 
From Lemma~\ref{lem:adjacent2arc}, 
we obtain an $M$-arc $R$ 
whose vertices except for the ends are in $\vparup{C}{S_1}\cup\vparup{C}{S_2}$. 
Let $s_1\in S_1$ and $s_2\in S_2$ be the ends of $R$. 
Take an $M$-balanced path $Q_{H_i}^{s_i}$ from $s_i$ to $x_{H_i}$ as stated in Lemma~\ref{lem:path2ear} for each $i\in\{1,2\}$. 
\begin{pclaim}\label{claim:arc2path}  
The path $Q_{H_i}^{s_i}$ is disjoint from $R$ 
for each $i\in\{1,2\}$.  
\end{pclaim} 
\begin{proof} 
Suppose this claim fails. 
Trace $Q_{H_i}^{s_i}$ from $s_i$, 
and let $t$ be the first encountered vertex in $R$. 
If $s_iRt$ has an even number of edges, 
then $s_iQ_{H_i}^{s_i}t + tRs_i$ is an $M$-alternating circuit of $G$ containing non-allowed edges in $\delta_{C}(S_i)$. 
 This contradicts Lemma~\ref{lem:circ2matching}. 
If  $s_iRt$ has an odd number of edges,  
then $s_iQ_{H_i}^{s_i}t + tRs_i$ is an $M$-ear relative to $t$ and traversing $H_i$. 
Under Lemma~\ref{lem:vear2comp}, this implies that $H_1$ is not minimal in $\poset{C}$, 
which is a contradiction. 
The claim is now proven. 
\end{proof} 

\begin{pclaim}\label{claim:path2path}
The paths $Q_{H_1}^{s_1}$ and $Q_{H_2}^{s_2}$ are disjoint. 
\end{pclaim} 
\begin{proof}
Suppose the claim fails, 
and let $t$ be the first encountered vertex in $Q_{H_2}^{s_2}$ if we trace $Q_{H_1}^{s_1}$ from $s_1$. 
By Claim~\ref{claim:arc2path}, 
$K := R + s_1Q_{H_1}^{s_1}t +  tQ_{H_2}^{s_2}s_2$ is a circuit.

If $tQ_{H_2}^{s_2}s_2$ has an even number of edges, then $K$ is an $M$-ear relative to $t$ and traversing $H_1$. 
By Lemma~\ref{lem:vear2comp}, 
this contradicts $H_1\in\pmin{C}$. 
Otherwise, if $tQ_{H_2}^{s_2}s_2$ has an odd number of edges, then 
$K$ is an $M$-alternating circuit containing non-allowed edges in $\delta_{C}(H_1)$. By Lemma~\ref{lem:circ2matching}, this is a contradiction. 
\end{proof} 
From Claims~\ref{claim:arc2path} and \ref{claim:path2path}, 
$R + Q_{H_1}^{s_1} + Q_{H_2}^{s_2}$ forms an $M$-ear relative to $S_0$, possessing the distinct ends $x_{H_1}$ and $x_{H_2}$, and traversing $H_1$ and $H_2$, which are contained in $\complement{S}$.  
Therefore, the proof is now completed by Lemma~\ref{lem:proper2matching}. 
\end{proof} 

This completes the proof for the case of Section~\ref{sec:newproof:casewhole:proper}.  
Therefore, the whole proof of the Tight Cut Lemma is completed.

\bibliographystyle{splncs03.bst}
\bibliography{library.bib} 

\end{document}